\def \A {\mathbf{A}}
\def \Acal{\mathcal{A}}
\def \B {\mathbf{B}}
\def \Ccal{\mathcal{C}}
\def \Cbb{\mathbb{C}}
\def \D {\mathbf{D}}
\def \Dcal {\mathcal{D}}
\def \drm {\mathrm{d}}
\def \Ebb {\mathbb{E}}
\def \I {\mathbf{I}}
\def \Ical {\mathcal{I}}
\def \Ocal{\mathcal{O}}
\def \Prm {\mathrm{P}}
\def \Rbb {\mathbb{R}}
\def \s {\mathbf{s}}
\def \Scal {\mathcal{S}}
\def \T {\mathbf{T}}
\def \u {\mathbf{u}}
\def \v {\mathbf{v}}
\def \Vcal {\mathcal{V}}
\def \W {\mathbf{W}}
\def \y {\mathbf{y}}
\def \Sigmabs {\boldsymbol{\Sigma}}
\def \Tr {\mathrm{Tr}\,}
\def \supp{\mathrm{supp}}
\def \Var{\mathrm{Var}}
\def \Re {\mathrm{Re}}
\def \Im {\mathrm{Im}}
\newtheorem{corollary}{Corollary}
\newtheorem{theorem}{Theorem}
\newtheorem{lemma}{Lemma}
\newtheorem{property}{Property}
\renewenvironment{proof}
{\noindent\textbf{Proof}:}{\hfill$\square$\bigbreak} 
\newcounter{countassum}
\newenvironment{assumption}
{
	\refstepcounter{countassum}
	\begin{flushleft}
	\noindent\textbf{Assumption A-\thecountassum}:
	\it
}
{
	\end{flushleft}
	
}
\title
{
	Almost sure localization of the eigenvalues in a Gaussian information plus noise model. Application to the spiked models.
	\footnote{This work was partially supported by the French program ANR-07-MDCO-012-01 "SESAME"}
}
\date{}
\author
{
	Philippe Loubaton, Pascal Vallet 
	\\
	\normalsize{Université Paris-Est - Marne la Vallée} 
	\\
	\normalsize{LIGM (CNRS- UMR 8049)}
	\\
	\normalsize{5 Boulevard Descartes, 77454 Marne-la-Vallée (France)}
	\\
	\normalsize{\textsf{\{loubaton,vallet\}@univ-mlv.fr}}
}
\begin{document}

\maketitle
\abstract
{
        Let $\boldsymbol{\Sigma}_N$ be a $M \times N$ random matrix defined by $\boldsymbol{\Sigma}_N = \B_N + \sigma \W_N$ where $\B_N$ is a uniformly bounded 
        deterministic matrix and where $\W_N$ is an independent identically distributed complex Gaussian matrix with zero mean and variance $\frac{1}{N}$ entries. 
        The purpose of this paper is to study the almost sure location of the eigenvalues $\hat{\lambda}_{1,N} \geq \ldots \geq \hat{\lambda}_{M,N}$ of 
        the Gram matrix ${\boldsymbol \Sigma}_N {\boldsymbol \Sigma}_N^*$ when $M$ and $N$ converge to $+\infty$ such that the ratio $c_N = \frac{M}{N}$ converges 
        towards a constant $c > 0$. 
        The results are used in order to derive, using an alternative approach, known results concerning the behaviour of the largest eigenvalues of 
        ${\boldsymbol \Sigma}_N {\boldsymbol \Sigma}_N^*$ when the rank of $ \B_N$ remains fixed and $M$ and $N$ converge to $+\infty$.
}
\\~\\
\textbf{Keywords:} random matrix theory, gaussian information plus noise model, localization of the eigenvalues, spiked models
\\~\\
\textbf{AMS 2010 Subject Classification:} Primary 15B52, 60F15
\\~\\
Submitted to EJP on September 29, 2010, final version accepted on September 27, 2011.

\newpage
\normalsize

%
%
%
%
%
%
%
%
%

\section{Introduction}

	\paragraph{The addressed problem and the results}

Let $\boldsymbol{\Sigma}_N$ be a $M \times N$ complex-valued matrix defined by 
\begin{align}
	\boldsymbol{\Sigma}_N = \B_N + \sigma \W_N
	\label{eq:model-infonoise}
\end{align}
where $\B_N$ is a $M \times N$ deterministic matrix such that $\sup_N \|\B_N\| < +\infty$, and where $\W_N = [\W_N]_{i,j}$ is a $M \times N$ complex Gaussian random matrix 
with independent identically distributed (i.i.d) entries such that 
$\mathbb{E}\left[[\W_N]_{i,j}\right]= 0$, $\left|\mathbb{E}\left[[\W_N]_{i,j}\right]\right|^{2} = \frac{1}{N}$, $\mathrm{Re}\left([\W_N]_{i,j}\right)$ and  
$\mathrm{Im}\left([\W_N]_{i,j}\right)$ are i.i.d zero mean real Gaussian random variables. 
Model \eqref{eq:model-infonoise} is referred in the literature to as the information plus noise model (see e.g Dozier-Silverstein \cite{dozier2007empirical}). 
In this paper, we assume that $\mathrm{Rank}(\B_N) = K(N)= K < M$ because this assumption is verified in a number of practical situations, in particular in the context of
the spiked models addressed here. 

The purpose of this paper is to study the almost sure location of the eigenvalues $\hat{\lambda}_{1,N} \geq \ldots \geq \hat{\lambda}_{M,N}$
of the Gram matrix $\Sigmabs_N \Sigmabs_N^*$ when $M$ and $N$ converge to $+\infty$ such that the ratio $c_N = \frac{M}{N}$ converges towards a constant $c > 0$ and to take benefit 
of the results to obtain, using a different approach than Benaych-Nadakuditi \cite{benaych2011singular}, the behaviour of the largest eigenvalues of the information plus noise 
spiked models for which the rank $K$ of $\B_N$ remains constant when $M$ and $N$ increase to $+\infty$.

The empirical spectral measure (or eigenvalue distribution) $\hat{\mu}_N = \frac{1}{M} \sum_{m=1}^{M} \delta_{\hat{\lambda}_{m,N}}$ of matrix $\Sigmabs_N \Sigmabs_N^*$ has the same 
asymptotic behaviour than a deterministic probability distribution $\mu_N$ (see e.g. Dozier-Silverstein \cite[Th.1.1]{dozier2007empirical} or Girko \cite[Th.7.4]{girko2001canonical}) 
whose support $\Scal_N$ is the union of disjoint compact intervals called in the following the clusters of $\Scal_N$.
The boundary points of each cluster coincide with the positive extrema of a certain rational function depending on the empirical spectral measure of matrix $\B_N \B_N^*$, 
$\sigma^{2}$ and on the ratio $c_N = \frac{M}{N}$ (see \cite{vallet2010sub}, Thereom 2). 
Each cluster $\Ical$ of $\Scal_N$ appears to be naturally associated to another interval containing a group of consecutive eigenvalues of ${\bf B}_N {\bf B}_N^*$ (\cite{vallet2010sub}). 
It is shown in \cite{vallet2010sub} that the property proved in Bai-Silverstein \cite{bai1998no} holds in the context of model \eqref{eq:model-infonoise}. 
Roughly speaking, it means that for an interval $[a,b]$ located outside $\mathcal{S}_N$ for $N$ large enough, no eigenvalue of 
$\boldsymbol{\Sigma}_N \boldsymbol{\Sigma}_N^*$ belong to $[a,b]$ almost surely, for all large $N$.

In this paper, we establish the analog of the property called in Bai-Silverstein \cite{bai1999exact} "exact separation":  almost surely, for $N$ large enough, the number of eigenvalues of 
${\boldsymbol \Sigma}_N {\boldsymbol \Sigma}_N^*$ less than $a$ (resp. greater than $b$) coincides with the number of eigenvalues of ${\bf B}_N {\bf B}_N^{*}$ associated to 
the clusters included into $[0, a]$ (resp. included into $[b, \infty)$).
Note that these results also hold in the case where $K=M$, not treated in this paper. Indeed, the analysis of the support $\Scal_N$ provided in \cite{vallet2010sub} can be extended 
when $\B_N\B_N^*$ is full rank. 
Once the characterization of the support is established, the probabilistic part of the proof of the above mentioned exact separation result eigenvalues can be used verbatim.

We also use the separation result to study the case where $\mathrm{Rank}({\bf B}_N) = K$ is independent of $N$. 
It is assumed that for each $k = 1, \ldots, K$, the non zero eigenvalues of ${\bf B}_N {\bf B}_N^{*}$ satisfy $\lim_{N \rightarrow +\infty} \lambda_{k,N} = \lambda_k$.
The support $\Scal_N$ of $\mu_N$ is first characterized in this case, and using the above results related to the almost sure location of the $(\hat{\lambda}_{k,N})_{k=1, \ldots, M}$, 
it is proved that if $\lambda_k > \sigma^{2} \sqrt{c}$, then, 
\begin{align}
	\hat{\lambda}_{k,N} \rightarrow  \frac{(\sigma^2 + \lambda_k)(\sigma^2 c + \lambda_k)}{\lambda_k},
	\label{eq:expre-limit-lambdahat-1}
\end{align}
and that if $\lambda_k \leq \sigma^{2} \sqrt{c}$, then,  
\begin{align}
	\hat{\lambda}_{k,N} \rightarrow \sigma^{2}(1 + \sqrt{c})^{2}.
	\label{eq:expre-limit-lambdahat-2}
\end{align}
This behaviour was first established in \cite{benaych2011singular} using a different approach.  

	\paragraph{Motivations}
	
Our work has been originally motivated by the context of array processing in which the signals transmitted by $K < M$ sources 
are received by an array equiped with $M$ sensors. 
Under certain assumptions, the $M$-dimensional vector ${\bf y}(n)$ received on the sensor array at time $n$ can be written as 
\begin{align}
	{\bf y}(n) = \sum_{k=1}^{K} {\bf d}_k s_k(n) + {\bf v}(n),
	\label{eq:motivation-model}
\end{align}
where each time series $(s_k(n))_{n \in \mathbb{Z}}$ represents a non observable deterministic signal corresponding to source $k$ and where ${\bf d}_k$ is an unknown deterministic 
$M$-dimensional vector depending on the direction of arrival of the $k$-th source. 
$({\bf v}(n))_{n \in \mathbb{Z}}$ is an additive complex white Gaussian noise such that $\mathbb{E}[{\bf v}(n){\bf v}(n)^*] = \sigma^{2} {\bf I}_M$. 
It is clear that \eqref{eq:motivation-model} is equivalent to \eqref{eq:model-infonoise} if we put $\Sigmabs_N = N^{-1/2} \left[ \y(1), \ldots, \y(N) \right]$, 
$\W_N = N^{-1/2} \sigma^{-1} [ \v(1), \ldots, \v(N)]$ and $\B_N = N^{-1/2} \D [\s(1), \ldots, \s(N)]$,
with ${\bf s}(n) = [s_1(n), \ldots, s_K(n)]^{T}$ and ${\bf D}  = [{\bf d}_1, \ldots, {\bf d}_K]$. 

Model \eqref{eq:motivation-model} poses important statistical problems such as detection of the number of sources $K$ or estimation of the direction of arrivals of the $K$ sources. 
A number of estimation schemes based on the eigenvalues and the eigenvectors of matrix $ {\boldsymbol \Sigma}_N {\boldsymbol \Sigma}_N^*$ were developed, 
and analysed if $N \rightarrow +\infty$ while $M$ remains fixed. 
If however $M$ and $N$ are of the same order of magnitude, the above technics may fail, and it is therefore quite relevant to study these
statistical problems in the asymptotic regime $M,N \rightarrow +\infty$ in such a way that $\frac{M}{N} \rightarrow c$, $c \in (0,+\infty)$. 
The number of sources may be constant or scale up with the dimensions $M$ and $N$. 
For this, the first step is to evaluate the behaviour of the eigenvalues of ${\boldsymbol \Sigma}_N {\boldsymbol \Sigma}_N^*$.

\paragraph{About the literature}

{\em Concerning the zero-mean correlated model.} The problems addressed in this paper were studied extensively in the context of the popular zero-mean correlated model defined by 
\begin{align}
	{\boldsymbol \Sigma}_N = {\bf H}_N {\bf W}_N,
	\label{eq:model-centre}
\end{align}  
where ${\bf H}_N$ is a deterministic $M \times M$ matrix and where ${\bf W}_N$ is a random matrix with possibly non Gaussian zero mean variance $\frac{1}{N}$ i.i.d entries. 
The most complete results concerning the almost sure localization of the eigenvalues of ${\boldsymbol \Sigma}_N{\boldsymbol \Sigma}_N^*$ are due to 
Bai-Silverstein \cite{bai1998no,bai1999exact} and were established in the non Gaussian case. 
Spiked models were first proposed by Johnstone \cite{johnstone2001distribution} in the context of \eqref{eq:model-centre} (matrix ${\bf H}_N$ is a diagonal matrix defined as a 
finite rank perturbation of the identity matrix). 
Later, Baik et al. \cite{baikpeche2005} studied, in the complex Gaussian case, the almost sure convergence of the largest eigenvalues of ${\boldsymbol \Sigma}_N{\boldsymbol \Sigma}_N^{*}$ 
and established central limit theorems. 
The analysis of \cite{baikpeche2005} uses extensively the explicit form of the joint probability distribution of the entries of ${\boldsymbol \Sigma}_N$. 
Using the results of \cite{bai1998no,bai1999exact} as well as the characterization of the support of the limiting distribution $\mu_N$ of the empirical eigenvalue distribution 
$\hat{\mu}_N$ (see Silverstein-Choi \cite{silversteinchoi1995}), Baik-Silverstein \cite{baik2006eigenvalues} addressed the non Gaussian case, and showed the almost sure convergence 
of certain eigenvalues of ${\boldsymbol \Sigma}_N{\boldsymbol \Sigma}_N^{*}$.  
Mestre considered in \cite{mestre2008improved} the case where ${\bf H}_N {\bf H}_N^{*}$ has a finite number of different positive eigenvalues having multiplicities converging to 
$+\infty$, and showed how to estimate the eigenvalues of ${\bf H}_N  {\bf H}_N^{*}$ as well as their associated eigenspace. 
Similar ideas were also developed in \cite{mestre2008modified} in order to address the source localization problem in the context of large sensor arrays when the source signals 
are i.i.d. sequences. 
The analysis of Mestre \cite{mestre2008modified,mestre2008improved} is based on the results of \cite{bai1998no,bai1999exact} as well as on the observation that it is possible 
to exhibit contours depending on the Stieljes transform of $\mu_N$, and enclosing each eigenvalue of ${\bf H}_N {\bf H}_N^{*}$. 
Paul studied in \cite{paul2007} the behaviour of the eigenvectors associated to the greatest eigenvalues of a Gaussian spiked model (almost sure convergence and central limit theorems). 
Bai and Yao showed in \cite{bai2008central} that certain eigenvalues of a non Gaussian spiked model satisfy a central limit theorem. 
We finally note that the above results on zero-mean spiked models have been used in the context of source localization (see \cite{abramovich2008,nadler2010non}). 

{\em Concerning the information plus noise model.} Except our paper \cite{vallet2010sub} devoted to the source localization of deterministic sources, 
the almost sure location of the eigenvalues of matrix ${\boldsymbol \Sigma}_N{\boldsymbol \Sigma}_N^*$ was not studied previously. 
In \cite{vallet2010sub}, we however followed partly the work of Capitaine et al. \cite{capitaine2009largest}, devoted to finite rank deformed Gaussian 
(or satisfying a Poincaré inequality) Wigner matrices, which was inspired by previous results of Haagerup and Thorbjornsen \cite{haagerup2005new}.
See also the recent paper \cite{capitaine2010free} in which the rank of the deformation may scale with the size of the matrix.
We used in \cite{vallet2010sub} the same approach to prove that for $N$ large enough, no eigenvalue of ${\boldsymbol \Sigma}_N{\boldsymbol \Sigma}_N^{*}$ is outside the 
support $\Scal_N$ of $\mu_N$. 
In \cite{vallet2010sub}, under the assumption that the eigenvalue $0$ of $\B_N\B_N^*$ is "far enough" from the others, we established a partial result showing that the $M-K$
smallest eigenvalues of $\Sigmabs_N\Sigmabs_N^*$ are almost surely separated from the others.
In the present paper, we prove a general exact separation property extending the result of \cite{baik2006eigenvalues} to the complex Gaussian information plus noise model.

The almost sure behaviour \eqref{eq:expre-limit-lambdahat-1}, \eqref{eq:expre-limit-lambdahat-2}, of the largest eigenvalues of information plus noise spiked models 
appears to be a consequence of the general results of \cite{benaych2011eigenvalues, benaych2011singular} devoted to the analysis of certain random models with additive and/or 
multiplicative finite rank perturbation. 
\eqref{eq:expre-limit-lambdahat-1} and \eqref{eq:expre-limit-lambdahat-2} are therefore not new, but the technics of \cite{benaych2011singular} completely differ from the 
approach used of the present paper which can be seen as an extension to the information plus noise model of the paper \cite{baik2006eigenvalues}.

\paragraph{Organization of the paper}  

In section \ref{sec:support}, we review some results of \cite{dozier2007analysis} and \cite{vallet2010sub} concerning the support $\Scal_N$ of $\mu_N$ 
as well as some useful background material.  As \cite{vallet2010sub} assumed $c_N < 1$, we address the case $c_N = 1$ and prove some extra results concerning the behaviour of the 
Stieltjes transform of $\mu_N$ around $0$. In section \ref{sec:exact}, we prove the analog of the exact separation of \cite{bai1999exact}.  
\cite{capitaine2009largest} generalized the approach of \cite{bai1999exact} to prove this property in the finite rank deformed Wigner model. 
We however show that it is still possible to use again the ideas of \cite{haagerup2005new}. We establish that it is sufficient to prove that the mass (w.r.t. $\mu_N$) of any interval 
$\Ical$ of $\Scal_N$ is equal to the proportion of eigenvalues of  $\B_N \B_N^*$ associated to $\Ical$. 
For this, we evaluate an integral along a certain contour enclosing the eigenvalues of $\B_N \B_N^*$ associated to $\Ical$. 
This contour is the analog of the contour introduced by \cite{mestre2008improved} in the context of model \eqref{eq:model-centre}
and was extensively used in \cite{vallet2010sub}.
Section \ref{section:spike} addresses the behaviour of the largest eigenvalues of an information plus noise spiked model. 
We analyse the support $\Scal_N$ of $\mu_N$, which appears equivalent to evaluate the positive extrema of a certain rational function. 
Using results concerning perturbed third order polynomial equations, it is shown that if $\lambda_k \neq \sigma^2 \sqrt{c}$ for $k=1,\ldots,K$, 
the intervals of $\Scal_N$ are $[\sigma^{2} (1 - \sqrt{c_N})^{2} + \Ocal(1/M), \sigma^{2} (1 + \sqrt{c_N})^{2} + \Ocal(1/M)]$ and
$[\lambda_{k,N}^{-1}(\lambda_{k,N} + \sigma^2 c_N)(\lambda_{k,N} + \sigma^2) - \Ocal^{+}(M^{-1/2}), 
\lambda_{k,N}^{-1}(\lambda_{k,N} + \sigma^2 c_N)(\lambda_{k,N} + \sigma^2) + \Ocal^{+}(M^{-1/2})]$,
where $k$ is any index for which $\lambda_{k,N} > \sigma^{2} \sqrt{c}$, and where $\Ocal^{+}(M^{-1/2})$ represents a positive $\Ocal(M^{-1/2})$ term. 
The results of section \ref{sec:exact} imply immediately \eqref{eq:expre-limit-lambdahat-1} and \eqref{eq:expre-limit-lambdahat-2} when 
$\lambda_k \neq \sigma^2 \sqrt{c}$ for $k=1,\ldots,K$.
If one the $(\lambda_k)_{k=1,\ldots,K}$ is equal to $\sigma^2 \sqrt{c}$, we use an argument similar to Baik-Silverstein \cite{baik2006eigenvalues}, which relies on an eigenvalue 
perturbation technic.

\paragraph{Model and assumptions}

We now summarize the model and assumptions which will be used in the paper, and introduce some definitions. 
Let $M, N, K \in \mathbb{N}^*$ such that $1 \leq K < M$, $K=K(N)$ and $M=M(N)$, functions of $N$ with $c_N = \frac{M}{N} \to c > 0$ as $N \to \infty$.
We consider a $M \times N$ random matrix $\boldsymbol{\Sigma}_N$ defined as
\begin{align}
	\boldsymbol{\Sigma}_N = \B_N + \sigma \W_N,
	\notag
\end{align}
where $\sigma >0$ and $\B_N$ and $\W_N$ satisfy the two following assumptions.
\begin{assumption}
	\label{assumption:norm_spec_BN}
	Matrix $\B_N$ is deterministic and satisfies $\sup_N \|\B_N\| < +\infty$.
\end{assumption}
\begin{assumption}
	\label{assumption:WN_gaussian}
	The entries of matrix $\W_N$ are i.i.d and follow a standard complex normal distribution $\mathcal{CN}(0,\frac{1}{N})$.
\end{assumption}
Note that the Gaussian assumption \textbf{A-2} will be only required in section \ref{sec:exact}.
All the results in section \ref{sec:support} concerning the convergence of the spectral distribution of $\Sigmabs_N\Sigmabs_N^*$ are also valid in the non Gaussian case.
In the following, we study the context where
\begin{assumption}
      \label{assumption:rankB}
	$\B_N \B_N^*$ is rank deficient, and the non zero eigenvalue of $\B_N \B_N^*$ have multiplicity 1. 
\end{assumption}
The assumption on the multiplicities of the eigenvalues of $\B_N\B_N^*$ is not really necessary, but it allows to simplify the notations.  
We denote by $K$ the rank of $\B_N \B_N^*$ ($K$ may depend on $N$), and by $\lambda_{1,N} > \lambda_{2,N} > \ldots > \lambda_{K,N} > 
\lambda_{K+1,N} = \ldots = \lambda_{M,N} = 0$ its eigenvalues.  
We also assume that 
\begin{assumption}
  	$c_N = \frac{M}{N} \leq 1$ for each $N$.
\end{assumption}
This of course implies that $c \leq 1$. Assuming $c_N \leq 1$ does not introduce any restriction because if $c_N > 1$, the eigenvalues of 
$\boldsymbol{\Sigma}_N  \boldsymbol{\Sigma}_N^{*}$ are $0$ with multiplicity $M-N$ as well as the eigenvalues of matrix  $\boldsymbol{\Sigma}_N^{*}  \boldsymbol{\Sigma}_N$. 
The location of this set of eigenvalues can of course be deduced from the results related to $c_N < 1$.

In this paper, $\Ccal_c^{\infty}(\mathbb{R}, \mathbb{R})$ will denote the set of infinitely differentiable functions with compact support, defined from $\mathbb{R}$ to 
$\mathbb{R}$. If $\Acal \subset \mathbb{R}$, $\partial \Acal$ and $\mathrm{Int}(\Acal)$ represent the boundary and the interior of $\Acal$ respectively. 

We finally recall the definition and useful well known properties of the Stieltjes transform, a fundamental tool for the study of the eigenvalues of random matrices.
Let $\mu$ be a positive finite measure on $\mathbb{R}$. We define its Stieltjes transform $\Psi_{\mu}$ as the function
\begin{align}
	\Psi_{\mu}(z) = \int_{\mathbb{R}} \frac{\mathrm{d}\mu(\lambda)}{\lambda - z} \quad \forall z \in \mathbb{C}\backslash\supp(\mu),
	\notag
\end{align}
where $\supp(\mu)$ represents the support of measure $\mu$. We have the following well-known properties
\begin{property}
	\label{property:Stieltjes}
	$\Psi_{\mu}$ satisfies
	\begin{enumerate}
		\item $\Psi_{\mu}$ is holomorphic on $\mathbb{C}\backslash\supp(\mu)$.
		\item \label{property:Im_Stieltjes}
		$z \in \mathbb{C}^+$ implies $\Psi_{\mu}(z) \in \mathbb{C}^+$.
		\item \label{property:Im_Stieltjes_R+} If $\mu(\mathbb{R}^-_*) = 0$, then $z \Psi_{\mu}(z) \in \mathbb{C}^+$ if  $z \in \mathbb{C}^+$ .
	\end{enumerate}
\end{property}

	\section{\texorpdfstring{Characterization of the support $\Scal_N$ of measure $\mu_N$}{Characterization of the support}}	
	\label{sec:support}
	
In this section, we recall some known results of \cite{dozier2007analysis} and \cite{vallet2010sub} related to the support 
$\Scal_N$ of measure $\mu_N$. As we assumed in \cite{vallet2010sub} that $c_N < 1$, we also provide, when it is necessary, some details on the specific 
case $c_N = 1$. 

	\subsection
	{
	        \texorpdfstring
	        {Convergence of the empirical spectral measure $\hat{\mu}_N$ of $\boldsymbol{\Sigma}_N \boldsymbol{\Sigma}_N^*$ torward $\mu_N$}
	        {Convergence of the e.s.d}
	}		
		
We recall that $\hat{\mu}_N$ is defined by  $\hat{\mu}_N = M^{-1} \sum_{i=1}^M \delta_{\hat{\lambda}_{i,N}}$. Its Stieltjes transform $\hat{m}_N$ is given, 
for all $z \in \mathbb{C}\backslash\{\hat{\lambda}_{1,N},\ldots,\hat{\lambda}_{M,N}\}$, by
\begin{align}
	\hat{m}_N(z) = \int_{\mathbb{R}} \frac{\mathrm{d}\hat{\mu}_N (\lambda)}{\lambda - z}.
	\notag
\end{align}
The following result, concerning the convergence of $\hat{m}_N(z)$ can be found in \cite[Th.1.1]{dozier2007empirical}, \cite[Th.7.4]{girko2001canonical} 
(see also \cite[Th.2.5]{hachem2007deterministic} for a more general model).
\begin{theorem}
	\label{theorem:conv_mh_N}	
	It exists a deterministic probability measure $\mu_N$, such that $\hat{\mu}_N - \mu_N \xrightarrow{\mathcal{D}} 0$ as $N \to \infty$ with probability one.
	Equivalently, the Stieltjes transform $m_N$ of $\mu_N$ satisfies $\hat{m}_N(z) - m_N(z) \to 0$ almost surely $\forall z \in \mathbb{C}\backslash\mathbb{R}^+$.	
	Moreover,  $\forall z \in \mathbb{C}\backslash\mathbb{R}^+$, $m_N(z)$ is the unique solution of the equation, 
	\begin{align}
		m_N(z) =
		\frac{1}{M}\mathrm{Tr}
		\left[  
		-z(1+\sigma^{2}c_{N}m_{N}(z))\mathbf{I}_{M}+\sigma^{2}(1-c_{N})\mathbf{I}_{M}+\frac{\mathbf{B}_{N}\mathbf{B}_{N}^{*}}{1+\sigma^{2}c_{N}m_{N}(z)}
		\right]^{-1}
		\label{definition:m_N}
	\end{align}
	satisfying $\Im(z m_N(z)) > 0$ for $z \in \mathbb{C}^+$.
\end{theorem}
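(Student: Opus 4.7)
The plan is to decompose the proof into four ingredients, exploiting the Gaussian assumption \textbf{A-2}. First, I establish Gaussian concentration of $\hat{m}_N(z)$ around its expectation $\alpha_N(z) := \Ebb[\hat{m}_N(z)]$ using the Gaussian Poincaré inequality. Second, I derive an approximate fixed-point equation for $\alpha_N(z)$ via the complex Gaussian integration-by-parts formula. Third, I prove existence and uniqueness of $m_N$ satisfying \eqref{definition:m_N} together with the constraint $\Im(z m_N(z)) > 0$. Finally, I convert pointwise convergence of Stieltjes transforms into weak convergence of the associated measures.

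For the concentration step, fix $z \in \mathbb{C}^+$ and let $\Q_N(z) = (\Sigmabs_N \Sigmabs_N^* - z \I_M)^{-1}$, so that $\hat{m}_N(z) = \tfrac{1}{M} \tr \Q_N(z)$. The map $\W_N \mapsto \hat{m}_N(z)$ is Lipschitz in the Frobenius norm with constant controlled by $(\sigma \|\Sigmabs_N\|)/(M |\Im z|^2)$, and under Assumptions \textbf{A-1}--\textbf{A-2} the operator norm $\|\Sigmabs_N\|$ is $\Ocal(1)$ with overwhelming probability. The Gaussian Poincaré inequality then yields $\Var(\hat{m}_N(z)) = \Ocal(N^{-2})$, and Borel--Cantelli gives $\hat{m}_N(z) - \alpha_N(z) \to 0$ almost surely. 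For the deterministic equivalent, I apply the Stein identity $\Ebb[[\W_N]_{i,j} f(\W_N)] = \tfrac{1}{N}\Ebb[\partial_{\overline{[\W_N]_{i,j}}} f(\W_N)]$ to the entries of $\Ebb[\Q_N \Sigmabs_N]$ and combine it with the resolvent identity $\Q_N \Sigmabs_N \Sigmabs_N^* = \I_M + z \Q_N$. After regrouping and using the Poincaré bound to replace $\tfrac{1}{M}\tr \Q_N$ by $\alpha_N(z)$ inside the expectations, the standard bookkeeping produces the scalar identity $\alpha_N(z) = \tfrac{1}{M} \tr \T_N(z, \alpha_N(z)) + \Ocal(N^{-2})$, where $\T_N(z, m) = \bigl[-z(1+\sigma^2 c_N m)\I_M + \sigma^2(1-c_N)\I_M + (1+\sigma^2 c_N m)^{-1}\B_N \B_N^*\bigr]^{-1}$.

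The main obstacle is the third step. Existence of a solution to \eqref{definition:m_N} in the class $\Ecal_z := \{m \in \mathbb{C}^+ : \Im(z m) > 0\}$ follows from a fixed-point/compactness argument applied to $m \mapsto \tfrac{1}{M}\tr \T_N(z, m)$, which preserves $\Ecal_z$ because $\Im(-z(1+\sigma^2 c_N m)) > 0$ keeps $\T_N(z, m)$ well-defined with controlled norm. Uniqueness is the delicate point: I would subtract two candidate solutions $m, m'$, factor $m - m'$ out of the resulting scalar identity, and check that the multiplier has modulus strictly less than one, using critically $|1 + \sigma^2 c_N m(z)| > 0$ and the imaginary-part constraint. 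Once uniqueness is secured, continuity of the fixed point in the perturbation parameter combined with the $\Ocal(N^{-2})$ remainder forces $\alpha_N(z) - m_N(z) \to 0$, and therefore $\hat{m}_N(z) - m_N(z) \to 0$ almost surely for every $z \in \mathbb{C}^+$; analytic continuation extends the convergence to $\mathbb{C} \setminus \mathbb{R}^+$. Verifying that $m_N$ is itself the Stieltjes transform of a probability measure supported on $\mathbb{R}^+$ reduces to checking $m_N(z) = -1/z + \Ocal(1/z^2)$ as $|z| \to \infty$ together with the two imaginary-part properties, which transfer from $\alpha_N$. Finally, the equivalence with $\hat{\mu}_N - \mu_N \xrightarrow{\mathcal{D}} 0$ follows from the standard Stieltjes-transform criterion for weak convergence, with tightness automatic since $\supp(\hat{\mu}_N) \subset [0, (\sup_N \|\B_N\| + \sigma(1+\sqrt{c_N}))^2]$ almost surely for all large $N$ under Assumption \textbf{A-1}.
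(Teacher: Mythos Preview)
The paper does not actually prove this theorem: immediately before the statement it writes ``The following result \ldots\ can be found in \cite[Th.1.1]{dozier2007empirical}, \cite[Th.7.4]{girko2001canonical} (see also \cite[Th.2.5]{hachem2007deterministic} for a more general model)'', and no argument is given. Moreover, the paper explicitly remarks that ``the Gaussian assumption \textbf{A-2} will be only required in section~\ref{sec:exact}. All the results in section~\ref{sec:support} concerning the convergence of the spectral distribution of $\Sigmabs_N\Sigmabs_N^*$ are also valid in the non Gaussian case.'' So the cited proofs work under i.i.d.\ finite-moment assumptions and rely on martingale-difference / Stieltjes-transform techniques rather than on Gaussian tools.

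Your sketch is a genuinely different route: you exploit \textbf{A-2} through the Gaussian Poincar\'e inequality and the Stein integration-by-parts identity. This is precisely the machinery the paper deploys later, for Lemmas~\ref{lemma:eq_exp_psi} and~\ref{lemma:var_psi} (see the sketch proofs there and the reference to \cite{pastur2005simple}), so your approach is coherent with the paper's toolbox even if the paper itself declines to use it for Theorem~\ref{theorem:conv_mh_N}. What you gain is a short, self-contained argument in the Gaussian setting; what you lose is the generality of \cite{dozier2007empirical}, which the paper wants to emphasize for this section. A couple of details to tighten if you carry this out: the Lipschitz/Poincar\'e bound does not need the random quantity $\|\Sigmabs_N\|$ at all, since $\|\Q_N(z)\|\le |\Im z|^{-1}$ and $\|\Q_N(z)\Sigmabs_N\|^2\le \|\Q_N(z)\|\,\|\I+z\Q_N(z)\|$ suffice; and the uniqueness step (your ``multiplier has modulus strictly less than one'') is where the real work lies---in the cited references this is handled via the constraint $\Im(z m)>0$ and a careful contraction estimate, not by a one-line factoring, so you should expect to expand that part substantially.
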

The behaviour of the Stieltjes transform $m_N$ around the real axis is fundamental to evaluate the support $\Scal_N$ of $\mu_N$. The following theorem 
is essentially due to \cite{dozier2007analysis}.   
\begin{theorem}
	\label{theorem:properties_m_N}
	\begin{enumerate}
		\item \label{item:continuity_m_N}
		If $c_N < 1$, the limit of $m_N(z)$, as $z \in \mathbb{C}^+$ converges to $x$, exists for each $x \in \mathbb{R}$ and is still denoted by $m_N(x)$.
		If $c_N = 1$, the limit exists for $x \neq 0$. 
		The function $x \rightarrow m_N(x)$ is continuous on $\mathbb{R}$ if $c_N < 1$ and on $\mathbb{R}^{*}$ if $c_N = 1$. 
		It is also continuously differentiable on $\mathbb{R} \backslash \partial \mathcal{S}_N$.  		
		\item \label{item:re(b)}
		If $c_N < 1$, then $\Re(1 + \sigma^2 c_N m_N(z)) \geq 1/2$ for each $z \in \mathbb{C}^{+} \cup \mathbb{R}$, and if $c_N = 1$, this inequality holds on  
		$\mathbb{C}^{+} \cup \mathbb{R}^{*}$. 	
		\item $m_N(x)$ is a solution of \eqref{definition:m_N} for $x \in \mathbb{R}\backslash \partial \mathcal{S}_N$.
		\item Measure $\mu_N$ is absolutely continuous and its density is given by $f_{\mu_N}(x) = \pi^{-1} \Im(m_N(x))$. 
	\end{enumerate}
\end{theorem}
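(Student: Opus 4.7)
The plan is to follow the strategy of \cite{dozier2007analysis}, adapted so as to treat the limiting case $c_N = 1$ which was excluded in the companion paper \cite{vallet2010sub}. The technical core is item \ref{item:re(b)}, the real part bound $\Re(1 + \sigma^2 c_N m_N(z)) \geq 1/2$, and I would prove it first. Setting $b_N(z) = 1 + \sigma^2 c_N m_N(z)$, equation (\ref{definition:m_N}) is equivalent, after inserting the diagonalization of $\B_N\B_N^*$ and multiplying numerator and denominator by $b_N(z)$, to the scalar identity
\begin{align}
b_N(z) - 1 = \frac{\sigma^2 c_N}{M}\sum_{k=1}^{M}\frac{b_N(z)}{-z b_N(z)^2 + \sigma^2(1 - c_N) b_N(z) + \lambda_{k,N}}.
\notag
\end{align}
For $z \in \mathbb{C}^+$, Property \ref{property:Stieltjes} gives $\Im m_N(z) > 0$ and, since the $\hat\lambda_{i,N}$ are nonnegative so that $\supp \mu_N \subset \mathbb{R}^+$, also $\Im(z m_N(z)) > 0$. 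These two positivities translate into sign information for $\Im b_N(z)$ and $\Im(z b_N(z)^2)$; taking imaginary parts of the identity above, the nonnegative stabilizer $\sigma^2(1-c_N)$ gives every term in the sum a definite sign, and dividing through by $\Im b_N(z) > 0$ produces an algebraic inequality equivalent to $\Re b_N(z) \geq 1/2$. When $c_N < 1$ the strict positivity of $\sigma^2(1-c_N)$ keeps the bound valid all the way to the real axis. When $c_N = 1$ the stabilizer vanishes and the zero eigenvalues of $\B_N\B_N^*$ produce a singularity at $z=0$, but the argument survives away from $z = 0$, matching the restriction to $\mathbb{R}^*$ in the statement.

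With item \ref{item:re(b)} in hand, the remaining items follow in a mostly standard way. The bound $|b_N(z)| \geq 1/2$ keeps every denominator in (\ref{definition:m_N}) bounded below on compact subsets of $\mathbb{C}^+ \cup \mathbb{R}$ (resp.\ $\mathbb{C}^+ \cup \mathbb{R}^*$ if $c_N = 1$), so $m_N$ is uniformly bounded there. A normal-families argument (equivalently, Cauchy estimates on small semi-circles) produces the boundary limit and yields the continuity part of item \ref{item:continuity_m_N}; item 3 then follows immediately by passing to the limit $z \to x$ in (\ref{definition:m_N}) via dominated convergence in the trace. Continuous differentiability off $\partial \Scal_N$ is obtained from the analytic implicit function theorem, the relevant nondegeneracy condition on the derivative in $m$ of the right-hand side of (\ref{definition:m_N}) being precisely what characterizes $\partial \Scal_N$ in \cite{vallet2010sub}. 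Item 4 is finally the Stieltjes inversion formula: since $\Im m_N$ extends continuously to $\mathbb{R}$ (or to $\mathbb{R}^*$), $\mu_N$ is absolutely continuous with density $\pi^{-1}\Im m_N(x)$; in the $c_N = 1$ case one verifies separately that $\mu_N$ carries no atom at $0$.

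The main obstacle is item \ref{item:re(b)} in the case $c_N = 1$. Without the strictly positive regularizer $\sigma^2(1-c_N)$, the sign analysis of the identity above becomes more fragile, because the $M-K$ terms associated with the zero eigenvalues of $\B_N\B_N^*$ reduce to $-(zb_N)^{-1}$ and blow up as $z \to 0$. One must then exploit simultaneously Assumption A-3 (rank deficiency, so that enough eigenvalues $\lambda_{k,N}$ are strictly positive) and the condition $z \neq 0$ in order to keep all denominators bounded below and recover the real-part lower bound on the restricted domain $\mathbb{C}^+ \cup \mathbb{R}^*$.
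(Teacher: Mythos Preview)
Your identification of item \ref{item:re(b)} as the technical core is right, and the scalar identity for $b_N(z)=1+\sigma^2 c_N m_N(z)$ is the correct starting point; it is in fact how \cite{dozier2007analysis} derive their formulas. But the mechanism you describe does not work as written. When you take \emph{imaginary} parts of your identity, the term $\sigma^2(1-c_N)b_N$ in each denominator $D_k=-zb_N^2+\sigma^2(1-c_N)b_N+\lambda_{k,N}$ contributes $\sigma^2(1-c_N)|b_N|^2$ to $\Re(b_N\overline{D_k})$, \emph{not} to $\Im(b_N\overline{D_k})$. Writing $z=z_1+iz_2$, the imaginary-part equation is
\[
(1-g)\,\Im b_N \;=\; G\,\Im(zb_N),\qquad G=\frac{\sigma^2 c_N}{M}\sum_k\frac{|b_N|^2}{|D_k|^2},\quad g=\frac{\sigma^2 c_N}{M}\sum_k\frac{\lambda_{k,N}}{|D_k|^2},
\]
with no $\sigma^2(1-c_N)$ anywhere; dividing by $\Im b_N$ gives $1-g-z_1 G = Gz_2\,\Re(b_N)/\Im(b_N)$, which is not by itself a lower bound on $\Re b_N$. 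What the paper actually uses is the \emph{real}-part equation
\[
\Re(b_N)\,(1-g+z_1 G) \;=\; 1 + \sigma^2(1-c_N)G + z_2\,\Im(b_N)\,G,
\]
whose right-hand side is $\ge 1$, \emph{together with} the Dozier--Silverstein inequality $|z|\,G<1-g$, which forces $0<1-g+z_1G<2$. This last inequality is a separate, nontrivial input taken from \cite[Sec.~2]{dozier2007analysis}; your plan omits it, and without it the denominator is uncontrolled and the bound $\Re b_N\ge 1/2$ does not follow.

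Two smaller points. Your route to items 1, 3, 4 via ``$|b_N|\ge 1/2$ keeps every denominator in (\ref{definition:m_N}) bounded below'' does not work: the eigenvalue-wise denominator $-zb_N+\sigma^2(1-c_N)+\lambda_{k,N}/b_N$ can be small even when $|b_N|\ge 1/2$. In the paper these items are not derived from item 2 at all; they are quoted directly from \cite{dozier2007analysis}. Finally, your $c_N=1$ discussion is over-engineered: the argument above is uniform on $\mathbb{C}^+$ for all $c_N\le 1$ (the term $\sigma^2(1-c_N)G\ge 0$ only helps in the numerator), and the restriction to $\mathbb{R}^*$ on the boundary comes solely from item 1, i.e.\ from the nonexistence of $m_N(0)$. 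Assumption A-3 plays no role here, and note that rank deficiency produces \emph{more} zero eigenvalues of $\B_N\B_N^*$, not more positive ones.
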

The statements of this theorem are essentially contained in \cite[Th.2.5]{dozier2007analysis} (see also \cite{vallet2010sub} for more details), except item \ref{item:re(b)} 
because it is shown in \cite[Lem.2.1]{dozier2007analysis} that  $\Re(1 + \sigma^2 c_N m_N(z)) \geq 0$. We therefore prove  item \ref{item:re(b)} in the Appendix \ref{sec:proof-re(b)}. 

We note that as $m_N$ is a Stietljes transform, it also satisfies $m_N(z^*) = m_N(z)^*$. Therefore, it holds that $m_N(z) \to m_N(x)^*$ as $z \in \Cbb^- \to x$, for 
$x \in \mathbb{R}$ if $c_N < 1$ and for $x \in \mathbb{R}^{*}$ if $c_N = 1$. 

In the following, we denote by $f_N, \phi_N$ and $w_N$ the functions defined by 
\begin{align}
	f_N(w) &= \frac{1}{M} \Tr \left(\B_N \B_N^* - w \I_M\right)^{-1}, 
	\notag\\
	\phi_{N}(w) &= w \left(1 - \sigma^2 c_N f_N(w) \right)^2 + \sigma^2 (1-c_N) \left(1 - \sigma^2 c_N f_N(w) \right), 
	\label{definition:phi_N} \\
      	w_N(z) &  =  z (1 + \sigma^2 c_N m_N(z))^2 - \sigma^2 (1-c_N) (1 + \sigma^2 c_N m_N(z)). 
      	\notag  
\end{align}
Functions $w_N$ and $\phi_N$ are of crucial importance because, as shown in \cite{vallet2010sub}, the interior of $\Scal_N$ is given by 
$\mathrm{Int}(\Scal_N) = \{ x > 0, \mathrm{Im}(w_N(x)) > 0 \}$ and for each $x \in \mathbb{R} \backslash \partial \Scal_N$, $w_N(x)$ is a solution of the equation $\phi_N(w) = x$. 
The characterization of $\Scal_N$ proposed in \cite{vallet2010sub}, based on a reformulation of the results in \cite[Th.3.2, Th.3.3]{dozier2007analysis}, consists in identifying 
$w_N(x)$ out of the set of solutions of  $\phi_N(w) = x$.

We also note that \eqref{definition:m_N} is equivalent to 
\begin{align}
	\label{eq:canonique-w}
	\frac{m_N(z)}{1+\sigma^{2} c_N m_N(z)} = f_N(w_N(z)),
\end{align}
and that the identity 
\begin{align}
	\label{eq:expre-b-1}
	\frac{1}{1+\sigma^{2} c_N m_N(z)} = 1 - \sigma^{2} c_N f_N(w_N(z))
\end{align}
holds for $z \in \mathbb{C}^{+} \cup \mathbb{R}$ if $c_N<1$, or for $z \in \mathbb{C}^{+} \cup \mathbb{R}^*$ if $c_N=1$.

	\subsection{\texorpdfstring{Properties of $\phi_N$ and $w_N$, and characterization of $\Scal_N$}{Properties of phi and w}}
	\label{section:function_phiN_support}

In this  paragraph, we recall the main properties of functions $\phi_N$ and $w_N$, as well the structure of $\Scal_N$. Lemmas \ref{lemma:property_phi_extrema},  
\ref{lemma:property_w} as well as theorem \ref{theorem:support} are proved in \cite[Prop.3, Th.2]{vallet2010sub} for $c_N < 1$, but the derivations for $c_N=1$ are similar, 
except items \ref{item:left-continuity} and \ref{item:domination_der_w} of lemma \ref{lemma:property_w}. 
\begin{lemma}
\label{lemma:property_phi_extrema}
	\begin{enumerate}
		\item \label{item:nb_positive_extrema} 
		The function $\phi_{N}$ admits $2 Q_N$ non-negative local extrema counting multiplicities (with $1\leq Q_N \leq K+1$) whose preimages are denoted 
		$w_{1,N}^-<0<w_{1,N}^+\leq w_{2,N}^-\ldots\leq w_{Q_N,N}^-<w_{Q_N,N}^+$.
		\item Define $x_{q,N}^-=\phi_{N}(w_{q,N}^-)$ and $x_{q,N}^+=\phi_{N}( w_{q,N}^+)$ for $q=1\ldots Q_N$. Then,
		\begin{align}
			x_{1,N}^- < x_{1,N}^+ \leq x_{2,N}^- < \ldots \leq x_{Q_N,N}^- < x_{Q_N,N}^+,
			\notag
		\end{align}
		and $x_{1,N}^- > 0$ if $c_N < 1$ while $x_{1,N}^- = 0$ if $c_N = 1$. 
		\item \label{item:location_extrema}
		For $q=1,\ldots,Q_N$, each interval $(w_{q,N}^-,w_{q,N}^+)$ contains at least one element of the set of eigenvalues
		$\{\lambda_{1,N},\ldots,\lambda_{K,N}, 0 \}$ and each eigenvalue of $\B_N \B_N^*$ belongs to one of these intervals.
		\item \label{item:croissance-phi} $\phi_{N}$ is increasing on the intervals $(-\infty,w_{1,N}^-]$, 
		$[w_{1,N}^+,w_{2,N}^-],\ldots,[w_{Q_N-1,N}^+,w_{Q_N,N}^-]$, $[w_{Q_N,N}^+,+\infty)$, and moreover
		\begin{align}
			\phi_{N}\left((-\infty,w_{1,N}^-]\right) &= (-\infty,x_{1,N}^-], 
			\notag\\
			\phi_{N}\left(\left[w_{q,N}^+,w_{q+1,N}^-\right]  \right) &= \left[x_{q,N}^+,x_{q+1,N}^-\right]  
			\quad \text{for each} \ q=1,\ldots,Q_N-1, 
			\notag\\
			\phi_{N}\left([w_{Q_N,N}^+,+\infty)\right) &= [x_{Q_N,N}^+,+\infty).
			\notag
		\end{align}
	\end{enumerate}
\end{lemma}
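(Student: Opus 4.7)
The plan is to analyze the rational function $\phi_N$ directly on the real line, exploiting the pole/endpoint structure to obtain lower bounds on the number of extrema on each interval of continuity, and then matching these with an algebraic upper bound derived from $\phi_N'=0$.

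First, I would identify the poles and endpoint behavior of $\phi_N$. Since $f_N(w)=M^{-1}\Tr(\B_N\B_N^* - w\I_M)^{-1}$ has simple poles with positive residues at each of $0,\lambda_{K,N},\ldots,\lambda_{1,N}$, the function $\phi_N$ is rational with the same poles. Writing $\phi_N=g(wg+\sigma^2(1-c_N))$ with $g=1-\sigma^2c_Nf_N$, a short residue computation shows that $\phi_N(w)\to+\infty$ from both sides of each positive eigenvalue $\lambda_{k,N}$ (since $wg^2$ dominates and $w>0$). Near $0$, one finds $g(w)\sim\sigma^2c_N(1-K/M)/w$ and $wg(w)+\sigma^2(1-c_N)\to\sigma^2(1-K/N)>0$, so the leading $w^{-1}$ coefficient of $\phi_N$ equals $\sigma^2c_N(1-K/M)\cdot\sigma^2(1-K/N)>0$, giving $\phi_N(0^-)=-\infty$ and $\phi_N(0^+)=+\infty$. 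At infinity, $\phi_N(w)\sim w$.

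Next, I would count extrema on each interval of continuity. On $(-\infty,0)$, $\phi_N$ runs from $-\infty$ to $-\infty$, so it has an odd number of local extrema including at least one local maximum (contributing the preimage $w_{1,N}^-$). On each of the $K+1$ intervals $(0,\lambda_{K,N})$, $(\lambda_{k+1,N},\lambda_{k,N})$ and $(\lambda_{1,N},+\infty)$, $\phi_N$ runs from $+\infty$ to $+\infty$ and has an odd number of extrema including at least one local minimum. To match the count in the statement I would clear denominators in $\phi_N'=0$: after noting that $w\tilde N+\sigma^2(1-c_N)D$ vanishes at $w=0$ (using $D(0)=0$) so $\phi_N$ simplifies to a rational fraction $u/v$ with $\deg u=2K+2$ and $\deg v=2K+1$, the real zeros of the derivative numerator $u'v-uv'$, which by a separate check at each pole do not coincide with any zero of $v$, organize into $Q_N\leq K+1$ pairs $(w_{q,N}^-,w_{q,N}^+)$ of non-negative extrema.

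The ordering and structural statements (items~2--4) then follow by sorting the critical points and using monotonicity. Between two consecutive critical points $w_{q,N}^+$ and $w_{q+1,N}^-$ of the listed sequence, $\phi_N$ has no critical point and no pole, hence is strictly monotone; a sign check of $\phi_N'$ just to the right of $w_{q,N}^+$ forces this monotonicity to be increasing, yielding item~4 and the weak ordering $x_{q,N}^+\leq x_{q+1,N}^-$. The strict inequality $x_{q,N}^-<x_{q,N}^+$, together with item~3, follows from the fact that between $w_{q,N}^-$ and $w_{q,N}^+$ the graph must cross a pole of $\phi_N$ --- necessarily an eigenvalue of $\B_N\B_N^*$ --- which forces the value to diverge before returning to $x_{q,N}^+$. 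For the sign of $x_{1,N}^-$: when $c_N=1$ the identity $\phi_N=wg^2$ gives $\phi_N\leq0$ on $(-\infty,0)$, so the local maximum value $x_{1,N}^-$ must vanish and is attained at a zero of $g$; when $c_N<1$, a direct sign analysis of $g$ and $wg+\sigma^2(1-c_N)$ at the maximum point, using $\sigma^2(1-c_N)>0$, gives $x_{1,N}^->0$. The main obstacle is the sharp upper bound $Q_N\leq K+1$, which requires careful bookkeeping of critical points across the $K+2$ intervals of continuity and of which are non-negative.
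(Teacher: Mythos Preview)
The paper does not give its own proof of this lemma: it simply records that the statement is established in \cite[Prop.~3, Th.~2]{vallet2010sub} for $c_N<1$, with the case $c_N=1$ handled similarly. So there is little to compare at the level of argument; your proposal is an attempt to supply what the paper omits.

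Your pole/endpoint analysis of $\phi_N$ is correct, and the treatment of the sign of $x_{1,N}^-$ in the two cases $c_N<1$ and $c_N=1$ is fine. However, the structural heart of the lemma---the alternating max/min pattern of the \emph{non-negative} extrema, item~4, and the bound $Q_N\le K+1$---is not established by your sketch, and the gap is more serious than the ``bookkeeping'' you flag at the end.

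First, the degree count is off. With $u,v$ coprime, $\deg u=2K+2$, $\deg v=2K+1$ and $v=w\prod_k(\lambda_{k,N}-w)^2$, the numerator $u'v-uv'$ vanishes to order exactly $1$ at each double root $\lambda_{k,N}$ of $v$ (contrary to your ``do not coincide with any zero of $v$''). After removing the factor $\prod_k(\lambda_{k,N}-w)$ one is left with a polynomial of degree $3K+2$, so there can be up to $3K+2$ real critical points, not $2K+2$; this matches the explicit factorisation of $\phi_N'$ in Section~\ref{section:spike} of the paper. Degree counting alone therefore cannot yield $Q_N\le K+1$.

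Second, your argument for item~4 asserts that between $w_{q,N}^+$ and $w_{q+1,N}^-$ there is ``no critical point and no pole''. But the list $\{w_{q,N}^{\pm}\}$ contains only the \emph{non-negative} extrema; critical points with negative critical value (which do occur---see the negative extremum found near each $\lambda_i$ in Section~\ref{section:spike}) are not excluded a priori, and neither are poles. Likewise, for item~3 you assert that ``the graph must cross a pole'' in $(w_{q,N}^-,w_{q,N}^+)$ without justification; this is exactly the statement to be proved. These three points (the alternation, the monotonicity intervals, and the presence of a pole in each $(w_{q,N}^-,w_{q,N}^+)$) are interlocked, and a purely algebraic count does not untangle them.

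The route taken in \cite{vallet2010sub} is different in spirit: one first establishes, from the defining equation for $m_N$, the properties of $w_N(x)$ recorded in Lemma~\ref{lemma:property_w}---in particular that on $\Scal_N^c$ the function $w_N$ is real, increasing, satisfies $\phi_N(w_N(x))=x$ with $\phi_N'(w_N(x))>0$ and $1-\sigma^2 c_N f_N(w_N(x))>0$, and that these conditions single out $w_N(x)$ uniquely. The points $w_{q,N}^\pm$ then arise as boundary limits of $w_N$ at $\partial\Scal_N$, and the ordering, the monotonicity intervals, and the presence of an eigenvalue in each $(w_{q,N}^-,w_{q,N}^+)$ follow from this analytic description rather than from raw zero-counting of $\phi_N'$. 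If you want a self-contained proof along your lines, you will need an additional argument (beyond degrees) showing that on each interval of continuity the leftmost and rightmost non-negative extrema are local minima (for the positive intervals) and that $(-\infty,0)$ carries exactly one non-negative extremum; once that alternation is in hand, item~3 follows, and then $Q_N\le K+1$ is immediate from the $K+1$ poles.
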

In figure \ref{figure:phi}, we give a typical representation of function $\phi_N$.
\begin{figure}
	\centering
	\includegraphics[scale=1]{./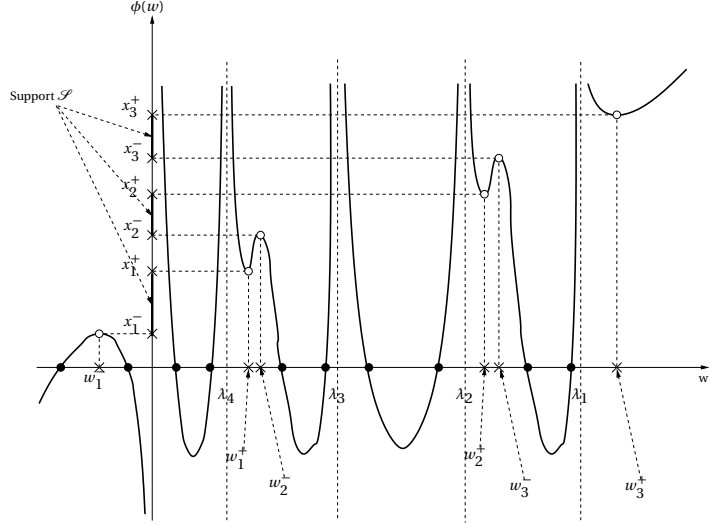}
	\caption{Function $\phi$ for $K=4$ and $c < 1$. Here $Q = 3$.}
	\label{figure:phi}
\end{figure}
We are now in position to recall the characterization of $\Scal_N$ presented in \cite[Th.2]{vallet2010sub}.
\begin{theorem}
	\label{theorem:support}
	The support $\mathcal{S}_N$ is given by
	\begin{align}
		\mathcal{S}_{N}=\bigcup_{q=1}^{Q_N}\left[x_{q,N}^{-},x_{q,N}^{+}\right],
		\notag
	\end{align}
	with $x_{1,N}^{-} = 0$ if $c_N = 1$ and $x_{1,N}^{-} > 0$ if $c_N < 1$.
\end{theorem}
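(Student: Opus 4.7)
The plan rests on two pieces of structure already in place: first, the identification $\mathrm{Int}(\Scal_N) = \{x > 0 : \Im(w_N(x)) > 0\}$ recalled just after \eqref{definition:phi_N}; second, the fact (Theorem \ref{theorem:properties_m_N}) that on $\mathbb{R}^+ \setminus \partial \Scal_N$ (on $\mathbb{R}^+ \setminus (\partial \Scal_N \cup \{0\})$ when $c_N = 1$) the continuous extension $w_N$ takes real values and solves $\phi_N(w_N(x)) = x$. It then suffices to identify exactly where this real extension exists and on which branch of $\phi_N^{-1}$ it lives; this is precisely the content of the monotonicity statement in item \ref{item:croissance-phi} of Lemma \ref{lemma:property_phi_extrema}.

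First I would establish $\Scal_N \cap \mathbb{R}^+ \subset \bigcup_{q=1}^{Q_N} [x_{q,N}^-, x_{q,N}^+]$. Let $x_0 > 0$ lie outside the right-hand union. By item \ref{item:croissance-phi} of Lemma \ref{lemma:property_phi_extrema}, $x_0$ belongs to the image of exactly one of the closed increasing branches of $\phi_N$: either $(-\infty, w_{1,N}^-]$ when $c_N < 1$ and $x_0 < x_{1,N}^-$, some $[w_{q,N}^+, w_{q+1,N}^-]$, or $[w_{Q_N,N}^+, +\infty)$. On such a branch $\phi_N$ is a homeomorphism, so it has a local real-valued continuous inverse $\psi$ satisfying $\phi_N(\psi(x)) = x$ near $x_0$. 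A continuity argument analogous to the one used in \cite[Th.2]{vallet2010sub} identifies $\psi(x_0)$ with $\lim_{z \to x_0,\, z \in \mathbb{C}^+} w_N(z)$, whence $\Im(w_N(x_0)) = 0$ and $x_0 \notin \mathrm{Int}(\Scal_N)$. Since the exterior of $\bigcup_q [x_{q,N}^-, x_{q,N}^+]$ is open in $\mathbb{R}^+$, it contains no point of $\partial \Scal_N$ either, so $x_0 \notin \Scal_N$.

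For the reverse inclusion I would show that each open interval $(x_{q,N}^-, x_{q,N}^+)$ is contained in $\mathrm{Int}(\Scal_N)$, the endpoints being then recovered by continuity. Assume for contradiction that $w_N$ has a real boundary value $w_0$ at some $x_1 \in (x_{q,N}^-, x_{q,N}^+)$. Then $\phi_N(w_0) = x_1$; either $w_0$ lies in an increasing branch $B$, but then $x_1 \in \phi_N(B)$ which is disjoint from every $(x_{q,N}^-, x_{q,N}^+)$ by item \ref{item:croissance-phi} --- contradiction --- or $w_0$ lies in one of the \emph{decreasing lobes} $(w_{r,N}^-, w_{r,N}^+)$, which I would rule out using the positivity $\Re(1 + \sigma^2 c_N m_N) \geq 1/2$ of item \ref{item:re(b)} of Theorem \ref{theorem:properties_m_N} combined with the identity \eqref{eq:expre-b-1}. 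The case $c_N = 1$, $x_{1,N}^- = 0$, follows from the same arguments applied away from $0$ together with continuity. The main technical obstacle is precisely this second alternative: ruling out real boundary values in the decreasing lobes relies on the sharpened lower bound of item \ref{item:re(b)} (the weaker bound $\Re(1 + \sigma^2 c_N m_N) \geq 0$ of \cite{dozier2007analysis} would not suffice), and this is why the refined estimate proved in Appendix \ref{sec:proof-re(b)} is the essential new ingredient.
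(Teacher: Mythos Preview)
The paper does not supply its own proof of this theorem: it simply cites \cite[Th.2]{vallet2010sub} for $c_N < 1$ and asserts that the derivation for $c_N = 1$ is similar. Your outline follows the right logical structure --- identifying $\Scal_N^c$ via the real increasing branches of $\phi_N^{-1}$ and $\mathrm{Int}(\Scal_N)$ via the complex values of $w_N$ --- and is presumably in the spirit of the cited argument.

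There is, however, a genuine gap in your reverse inclusion. When $x_1 \in (x_{q,N}^-, x_{q,N}^+)$ and you suppose $w_0 = w_N(x_1)$ is real, the single constraint $1 - \sigma^2 c_N f_N(w_0) > 0$ obtained from \eqref{eq:expre-b-1} and item \ref{item:re(b)} does \emph{not} by itself exclude $w_0 \in (w_{r,N}^-, w_{r,N}^+)$. Take $r \geq 2$ and let $z_{k,N}$ be the smallest zero of $1 - \sigma^2 c_N f_N$ lying in that lobe (Lemma \ref{lemma:property_phi_zeros} guarantees $w_{r,N}^- < z_{k,N}$): on $(w_{r,N}^-, z_{k,N})$ one has $1 - \sigma^2 c_N f_N > 0$, yet $\phi_N$ ranges continuously from $x_{r,N}^-$ down to $\phi_N(z_{k,N}) = 0$, so its image covers every $(x_{q,N}^-, x_{q,N}^+)$ with $q < r$. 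Ruling out such spurious real preimages actually requires the \emph{second} condition appearing in item \ref{item:w_eq_phi} of Lemma \ref{lemma:property_w}, namely $\phi_N'(w_0) > 0$; this is not a consequence of the first and needs an independent argument --- for instance via the positivity of $m_N'$ off the support, or a continuity tracking of $w_N(x)$ along the real axis from $+\infty$ downwards.

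Your final claim --- that the sharpened bound $\Re(1+\sigma^2 c_N m_N) \geq 1/2$ is the ``essential new ingredient'' and that the weaker bound $\geq 0$ of \cite{dozier2007analysis} would not suffice --- is also incorrect. The theorem for $c_N < 1$ is already established in \cite{vallet2010sub}, which predates the refined estimate, so the result cannot hinge on it; and when $m_N(x_1)$ is real the weaker bound already forces $1 + \sigma^2 c_N m_N(x_1) \geq 0$, which (together with non-vanishing) is all your argument ever uses. The refined bound is exploited in the paper for Corollary \ref{corollary:support_bounded}, not here.
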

The intervals $([x_{q,N}^{-},x_{q,N}^{+}])_{q=1, \ldots, Q_N}$ will be called the clusters of $\Scal_N$. 
Cluster $[x_{q,N}^{-},x_{q,N}^{+}]$ corresponds to the interval $[w_{q,N}^{-},w_{q,N}^{+}]$ in the sense that $x_{q,N}^{-} = \phi_N(w_{q,N}^{-})$ and $x_{q,N}^{+} = \phi_N(w_{q,N}^{+})$. 
Finally, we shall say that an eigenvalue $\lambda_{k,N}$ of ${\bf B}_N {\bf B}_N^{*}$ is associated to cluster $[x_{q,N}^{-},x_{q,N}^{+}]$ 
if $\lambda_{k,N}  \in (w_{q,N}^{-},w_{q,N}^{+})$. 

In the same way as in theorem \ref{theorem:properties_m_N}, we set $w_N(x) = \lim_{z \in \mathbb{C}^+, z \to x} w_N(z)$ for $x \in \mathbb{R}$ if $c_N < 1$ and for $x \in \mathbb{R}^*$ if
$c_N = 1$. We notice that $\lim_{z \in \mathbb{C}^-, z \to x} w_N(z) = w_N(x)^*$. Function $x \rightarrow w_N(x)$ satisfies the following properties. 
\begin{lemma}
	\label{lemma:property_w}
 	The following properties hold
	\begin{enumerate}
		\item \label{item:regularity_w}
		$x \rightarrow w_N(x)$ is continuous on $\mathbb{R}$ if $c_N < 1$ and on $\mathbb{R}^{*}$ if $c_N = 1$, and is continuously differentiable 
		on $\mathbb{R} \backslash \partial \Scal_N$. 	
		\item \label{item:w_increase}
		$w_N$ is real and increasing on $\mathbb{R}\backslash \mathcal{S}_N$.
		\item \label{item:ineq_f_w}
		$1 - \sigma^2 c_N f_N(w_N(x)) \neq 0$ for $x \in \mathbb{R}\backslash \partial \mathcal{S}_N$.
		\item \label{item:w_on_supp}
		$x \in \mathrm{Int}(\mathcal{S}_N)$ if and only if  $w_N(x) \in \mathbb{C}^+$. 
		\item \label{item:w_eq_phi}
		For $x \in \mathbb{R}\backslash \partial \mathcal{S}_N$, $w_N(x)$ is a solution of the equation $\phi_N(w) = x$. 
		If  $x \in \mathrm{Int}(\mathcal{S}_N)$, $w_N(x)$ is the unique solution belonging to $\mathbb{C}^+$ and if $x \in \mathcal{S}_N^{c}$, 
		$w_N(x)$ is the unique solution satisfying $\phi'_N(w_N(x)) > 0$ and $1-\sigma^2 c_N f_N(w_N(x)) > 0$.
		\item \label{item:left-continuity} 
		Function $x \rightarrow w_N(x)$ is continuous at $x = x_{1,N}^- = 0$ for $c_N = 1$.  
		\item \label{item:w_dS}
		For $q=1,\ldots,Q_N$, $w_N(x_{q,N}^-)=w_{q,N}^-$ and $w_N(x_{q,N}^+)=w_{q,N}^+$.
		\item \label{item:domination_der_w}
		Let $q=1,\ldots,Q_N$. Then, there exists a constant $C > 0$ and  neighborhoods $\mathcal{V}(x_{q,N}^-)$, $\mathcal{V}(x_{q,N}^+)$ of respectively 
		$x_{q,N}^-$ and $x_{q,N}^+$ such that,
		\begin{align}
			|w'_N(x)| & \leq  C \left|x - x_{q,N}^-\right|^{-1/2}
			\quad \forall x \in \mathcal{V}(x_{q,N}^-) \cap \mathbb{R} \backslash \{x_{q,N}^-\}, 
			\label{eq:comportement-w'-moins}
			\\
			|w'_N(x)| & \leq  C \left|x - x_{q,N}^+\right|^{-1/2}
			\quad \forall x \in \mathcal{V}(x_{q,N}^+) \cap \mathbb{R} \backslash \{x_{q,N}^+\},.
			\label{eq:comportement-w'-plus}
		\end{align}			
	\end{enumerate}
\end{lemma}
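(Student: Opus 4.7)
All items except \ref{item:left-continuity} and \ref{item:domination_der_w} are proved in \cite[Prop.~3]{vallet2010sub} for $c_N < 1$, and the same arguments extend to $c_N = 1$ once one restricts to $x \in \mathbb{R}^\ast$: the Stieltjes transform $m_N$ remains continuous there by theorem \ref{theorem:properties_m_N}, and the relations defining $w_N$ and $f_N$ carry over without modification. My focus is therefore on items \ref{item:left-continuity} (continuity of $w_N$ at $x = 0$ when $c_N = 1$) and \ref{item:domination_der_w} (the $|x - x_{q,N}^\pm|^{-1/2}$ derivative bound).

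The common tool is a local Taylor expansion of $\phi_N$ at each extremum. By item \ref{item:location_extrema} of lemma \ref{lemma:property_phi_extrema}, none of the $w_{q,N}^{\pm}$ coincides with an eigenvalue of $\B_N\B_N^*$, so $\phi_N$ is smooth on a neighbourhood of each extremum. Since $\phi_N'(w_{q,N}^{\pm}) = 0$ and $w_{q,N}^{-}$ (resp. $w_{q,N}^{+}$) is a local maximum (resp. minimum) by item \ref{item:croissance-phi}, one has $\phi_N''(w_{q,N}^{-}) < 0$ and $\phi_N''(w_{q,N}^{+}) > 0$ in the generic, non-degenerate situation. Under that assumption the expansion
\[
\phi_N(w) - x_{q,N}^{\pm} = \tfrac{1}{2}\phi_N''(w_{q,N}^{\pm})(w - w_{q,N}^{\pm})^2 + O\bigl((w - w_{q,N}^{\pm})^3\bigr)
\]
combined with the identity $\phi_N(w_N(x)) = x$ from item \ref{item:w_eq_phi} gives $|w_N(x) - w_{q,N}^{\pm}| = O(\sqrt{|x - x_{q,N}^{\pm}|})$ in a neighbourhood of $x_{q,N}^{\pm}$. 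Differentiating $\phi_N(w_N(x)) = x$ yields $w_N'(x) = 1/\phi_N'(w_N(x))$, and expanding $\phi_N'$ at its zero $w_{q,N}^{\pm}$ leads to $|w_N'(x)| = O(|w_N(x) - w_{q,N}^{\pm}|^{-1}) = O(|x - x_{q,N}^{\pm}|^{-1/2})$, which is exactly item \ref{item:domination_der_w}.

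For item \ref{item:left-continuity}, specialise the preceding expansion to the extremum $w_{1,N}^{-}$, which in the case $c_N = 1$ is mapped by $\phi_N$ to $0 = x_{1,N}^{-}$. For $x < 0$ close to $0$, item \ref{item:w_eq_phi} identifies $w_N(x)$ as the unique real root of $\phi_N(w) = x$ subject to $\phi_N'(w) > 0$ and $1 - \sigma^{2} c_N f_N(w) > 0$; since both selection conditions are preserved continuously in a left-neighbourhood of $0$, they pin the limit to $w_{1,N}^{-}$. For $x > 0$ small, $x \in \mathrm{Int}(\Scal_N)$ and $w_N(x) \in \mathbb{C}^+$ by item \ref{item:w_on_supp}, and the quadratic expansion forces $(w_N(x) - w_{1,N}^{-})^2 \approx 2x/\phi_N''(w_{1,N}^{-}) < 0$, so $w_N(x) - w_{1,N}^{-}$ is asymptotically purely imaginary with vanishing modulus. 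The two one-sided limits coincide, giving the required continuity.

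The main technical obstacle is justifying the non-degeneracy $\phi_N''(w_{q,N}^{\pm}) \neq 0$. At $w_{1,N}^{-}$ this is automatic since $w_{1,N}^{-} < 0 < w_{1,N}^{+}$ strictly separates this extremum from the others, so item \ref{item:left-continuity} goes through unconditionally. In item \ref{item:domination_der_w}, the possible degeneracy occurs only at a touching pair $w_{q,N}^{+} = w_{q+1,N}^{-}$ where two clusters merge; away from such configurations the square-root bound follows as above, and at a merge point the $-1/2$ exponent is still recovered by tracking the next non-vanishing derivative of $\phi_N$ and restricting $\mathcal{V}(x_{q,N}^{\pm})$ to a single connected component of $\Scal_N^c \cup \mathrm{Int}(\Scal_N)$, where a compactness argument delivers the uniform constant $C$.
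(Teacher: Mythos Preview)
Your approach via the local Taylor expansion of $\phi_N$ at its critical points is different from the paper's, and more geometric. For the case $c_N=1$, $q=1$, the paper works directly with the Stieltjes transform: it introduces the auxiliary function $\psi(\xi,y)=1-\frac{1}{M}\Tr\bigl(\B_N\B_N^*\frac{\xi}{y+\sigma^2\xi}-\xi(y+\sigma^2\xi)\bigr)^{-1}$, observes that the equation for $m_N$ is $\psi(\sqrt{x}\,m_N(x),\sqrt{x})=0$, and applies the analytic implicit function theorem at $(\xi_0,0)$ with $\xi_0=i\sigma^{-2}\sqrt{|w_{1,N}^-|}$. This yields that $y\mapsto y\,m_N(y^2)$ extends holomorphically through $y=0$, from which both the right limit $w_N(0^+)=w_{1,N}^-$ and the bound $|w_N'(x)|\le C x^{-1/2}$ follow by direct computation of $w_N'(x)=(1+\sigma^2 m_N(x))(1+\sigma^2(m_N(x)+2xm_N'(x)))$. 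Your route avoids this analytic machinery by exploiting item~\ref{item:w_eq_phi}, which is neater, but two points need tightening.

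First, your right-continuity argument is circular as written: saying ``the quadratic expansion forces $(w_N(x)-w_{1,N}^-)^2\approx 2x/\phi_N''(w_{1,N}^-)$'' presupposes that $w_N(x)$ lies in the disk where the expansion is valid, which is exactly what you are trying to prove. The fix is to run the logic the other way: for small $x>0$ the quadratic normal form of $\phi_N$ near its non-degenerate critical point $w_{1,N}^-$ produces a solution of $\phi_N(w)=x$ in $\mathbb{C}^+$ close to $w_{1,N}^-$; by the \emph{uniqueness} clause in item~\ref{item:w_eq_phi} this solution must equal $w_N(x)$. Only then does the expansion apply to $w_N(x)$ itself.

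Second, your justification of $\phi_N''(w_{1,N}^-)\neq 0$ (``strict separation from the other extrema'') is not a proof: an isolated critical point can still be degenerate. For $c_N=1$ one has $\phi_N(w)=w\,u(w)^2$ with $u(w)=1-\sigma^2 f_N(w)$ and $u(w_{1,N}^-)=0$ (lemma~\ref{lemma:property_phi_zeros}), so a direct computation gives $\phi_N''(w_{1,N}^-)=2w_{1,N}^-\bigl(u'(w_{1,N}^-)\bigr)^2<0$, which is what you need.
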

The lemma was proved in \cite[Prop.2, Lem.3]{vallet2010sub} in the case $c_N < 1$. The proofs extend easily to $c_N = 1$, except items \ref{item:left-continuity} and 
\ref{item:domination_der_w} for $q=1$. These 2 statements are proved in the Appendix \ref{sec:proof-w'}.  

We finish this section by showing that the following result holds. 
\begin{corollary}
        \label{corollary:support_bounded}
	We have
	\begin{align}
		\sup_{N} x_{Q_N,N}^{+} < \infty,
		\notag
	\end{align}
	i.e. $\cup_{N} \Scal_N$ is a bounded set.  
\end{corollary}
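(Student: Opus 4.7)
The plan is to exploit the precise shape of $\phi_N$ for large $w$, which is essentially linear with slope one thanks to the uniform bound on $\|\mathbf{B}_N\|$. Lemma \ref{lemma:property_phi_extrema}, item \ref{item:croissance-phi}, asserts that $\phi_N$ is strictly increasing on $[w_{Q_N,N}^+,+\infty)$ and that $w_{Q_N,N}^+$ is the largest critical point of $\phi_N$. It will therefore suffice to exhibit a constant $C_0$, depending only on $\sigma$, $c$ and $B := \sup_N \|\mathbf{B}_N\|$, such that $\phi_N'(w)>0$ for every $w \geq C_0$; this will force $w_{Q_N,N}^+ < C_0$, and then $x_{Q_N,N}^+ = \phi_N(w_{Q_N,N}^+) \leq \phi_N(2C_0)$ by monotonicity on $[w_{Q_N,N}^+,+\infty)$.

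To estimate $\phi_N'$ for large $w$, I first note that $\sup_N \|\mathbf{B}_N\| \leq B <+\infty$ by Assumption A-1 and that $\sup_N c_N < +\infty$ since $c_N \to c$. For $w \geq 2B^2$ every eigenvalue $\lambda_{k,N}$ of $\mathbf{B}_N\mathbf{B}_N^*$ satisfies $w - \lambda_{k,N} \geq w/2 > 0$, which gives the two elementary bounds
\begin{align}
\bigl|f_N(w)\bigr| \leq \frac{2}{w}, \qquad \bigl|f_N'(w)\bigr| \leq \frac{4}{w^2},
\notag
\end{align}
uniformly in $N$. Setting $g_N(w) = 1 - \sigma^2 c_N f_N(w)$, so that $\phi_N(w) = w\, g_N(w)^2 + \sigma^2(1-c_N)g_N(w)$ and
\begin{align}
\phi_N'(w) = g_N(w)^2 + g_N'(w)\bigl[\,2w\,g_N(w) + \sigma^2(1-c_N)\bigr],
\notag
\end{align}
the estimates above yield $g_N(w) = 1 + O(1/w)$ and $g_N'(w) = -\sigma^2 c_N f_N'(w) = O(1/w^2)$, with constants depending only on $B,\sigma,c$. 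Consequently $\phi_N'(w) = 1 + O(1/w)$ uniformly in $N$, and one can pick $C_0 = C_0(B,\sigma,c)$ so large that $\phi_N'(w) \geq 1/2$ for every $w \geq C_0$ and every $N$.

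Since $\phi_N'$ does not vanish on $[C_0,+\infty)$, the critical point $w_{Q_N,N}^+$ must lie in $(0,C_0)$. Evaluating $\phi_N$ at the fixed value $w_\star = 2C_0$, the bounds on $f_N$ and $g_N$ give $\phi_N(w_\star) \leq 2C_0\bigl(1 + \sigma^2 c_N/C_0\bigr)^2 + \sigma^2\bigl(1 + \sigma^2 c_N/C_0\bigr) =: C_1$, a constant independent of $N$. Monotonicity of $\phi_N$ on $[w_{Q_N,N}^+,+\infty)$ (Lemma \ref{lemma:property_phi_extrema}, item \ref{item:croissance-phi}) and $w_\star > C_0 > w_{Q_N,N}^+$ then yield $x_{Q_N,N}^+ = \phi_N(w_{Q_N,N}^+) \leq \phi_N(w_\star) \leq C_1$, which is the desired uniform bound.

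The only potentially delicate step is the uniform-in-$N$ asymptotic analysis of $\phi_N'$, but it reduces to controlling $f_N(w)$ and $f_N'(w)$ for $w$ far above the spectrum of $\mathbf{B}_N\mathbf{B}_N^*$, which is immediate from Assumption A-1. No finer information about the clusters nor about the behaviour of $m_N$ near $\partial \mathcal{S}_N$ is needed.
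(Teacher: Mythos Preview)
Your proof is correct and follows the same overall strategy as the paper: show that $\phi_N'(w)\to 1$ uniformly in $N$ as $w\to+\infty$, deduce that the largest critical point $w_{Q_N,N}^+$ is uniformly bounded, and then convert this into a uniform bound on $x_{Q_N,N}^+=\phi_N(w_{Q_N,N}^+)$. The first step is essentially identical to the paper's (the paper writes the bounds on $f_N,f_N',wf_N'$ for $w>\sup_N\|\mathbf{B}_N\|^2$ and reaches the same conclusion $\sup_N w_{Q_N,N}^+<\infty$).

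The final conversion step differs. The paper inverts the relation $w_{Q_N,N}^+=w_N(x_{Q_N,N}^+)=x_{Q_N,N}^+(1+\sigma^2 c_N m_N(x_{Q_N,N}^+))^2-\sigma^2(1-c_N)(1+\sigma^2 c_N m_N(x_{Q_N,N}^+))$ and then invokes the lower bound $1+\sigma^2 c_N m_N\geq 1/2$ from Theorem~\ref{theorem:properties_m_N}, item~\ref{item:re(b)} (established separately in Appendix~\ref{sec:proof-re(b)}). You instead stay entirely on the $\phi_N$ side: monotonicity of $\phi_N$ on $[w_{Q_N,N}^+,+\infty)$ gives $x_{Q_N,N}^+\leq\phi_N(2C_0)$, which you bound directly. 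Your route is a little more self-contained, since it avoids the Stieltjes-transform estimate and uses only elementary calculus on $\phi_N$; the paper's route, on the other hand, makes transparent the link between $x_{Q_N,N}^+$ and $w_{Q_N,N}^+$ through $m_N$, which is natural given the central role of $w_N$ elsewhere in the paper.
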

\begin{proof} 
	We define $\lambda_{max}$ by $\lambda_{max} = \sup_N \|\B_N\|^2$. It follows that for 
	$w > \lambda_{max}$,
	\begin{align}
		\sup_N |f_N(w)| &\leq \frac{1}{|\lambda_{max} - w|}, 
		\notag\\
		\sup_N |f'_N(w)| &\leq \frac{1}{|\lambda_{max} - w|^2},
		\notag\\
		\sup_N |w f'_N(w)| &\leq \frac{w}{|\lambda_{max} - w|^2},
		\notag
	\end{align}
	and since $\phi'_N(w) = (1-\sigma^2 c_N f_N(w))^2 - 2\sigma^2 c_N w f_N'(w)(1 - \sigma^2 c_N f_N(w)) - \sigma^4 c_N (1-c_N) f'_N(w)$
	converges towards 1 when $w \rightarrow +\infty$, we deduce that for $\epsilon > 0$, $\exists w_{\epsilon} > \lambda_{max}$ such that 
	$\forall w > w_{\epsilon}$, $\phi'_N(w) > \epsilon$ for all $N$.
	Since $\phi'_N(w_{Q_N, N}^+) = 0$, this implies that
	\begin{align}
		\sup_N w_{Q_N,N}^+ \leq w_{\epsilon} < + \infty.
		\notag
	\end{align}
	Moreover, using $w_{Q_N,N}^+ = w_N(x_{Q_N,N}^+) = x_{Q_N,N}^+ (1 + \sigma^2 c_N m_N(x_{Q_N,N}^+))^2 - \sigma^2 (1-c_N)(1 + \sigma^2 c_N m_N(x_{Q_N,N}^+))$,
	and  item \ref{item:re(b)} of theorem \ref{theorem:properties_m_N}, we get that 
	\begin{align}
		x_{Q_N,N}^+ \leq \frac{w_{\epsilon}}{(1 + \sigma^2 c_N m_N(x_{Q_N,N}^+))^{2}} 
		+ \frac{\sigma^{2}(1-c_N)}{1 
		+ \sigma^2 c_N m_N(x_{Q_N,N}^+)} <  4 w_{\epsilon} 
		+ 2 \sigma^{2}.
		\notag
	\end{align}
	This completes the proof.
\end{proof}

\section{Almost sure location of the sample eigenvalues.}
\label{sec:exact}

We first recall the following result of \cite[Th.3]{vallet2010sub}, which states the almost sure absence of eigenvalue of $\Sigmabs_N\Sigmabs_N^*$ outside the support $\Scal_N$ of $\mu_N$ 
for all large $N$. This property is well-known  in the context of zero mean non Gaussian correlated matrices (see \cite{bai1998no}).
We note that the proof of theorem \cite[Th.3]{vallet2010sub} uses extensively that $\W_N$ is Gaussian (assumption \textbf{A-2}).
\begin{theorem}
	\label{theorem:no_eig}
	Let $a,b \in \mathbb{R}$, $\epsilon > 0$ and $N_0 \in \mathbb{N}$ such that $(a - \epsilon, b + \epsilon) \cap \mathcal{S}_N = \emptyset$ for each $N > N_0$.
	Then, with probability one, no eigenvalue of $\boldsymbol{\Sigma}_N \boldsymbol{\Sigma}_N^*$ belongs to $[a,b]$ for $N$ large enough.
\end{theorem}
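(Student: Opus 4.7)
The plan is to follow the Haagerup-Thorbjornsen strategy (see \cite{haagerup2005new, capitaine2009largest, vallet2010sub}), reducing the absence of eigenvalues in $[a,b]$ to the fast decay of a suitable moment of $\Tr \phi(\Sigmabs_N \Sigmabs_N^*)$ for a test function $\phi$ localizing on $[a,b]$. Choose $\phi \in \Ccal_c^{\infty}(\Rbb, \Rbb)$ with $0 \leq \phi \leq 1$, $\phi \equiv 1$ on $[a,b]$, and $\supp(\phi) \subset (a-\epsilon, b+\epsilon)$. By hypothesis $\supp(\phi) \cap \Scal_N = \emptyset$ for $N > N_0$, so $\int \phi\, d\mu_N = 0$ for all such $N$. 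Since $\#\{i : \hat{\lambda}_{i,N} \in [a,b]\} \leq \Tr \phi(\Sigmabs_N \Sigmabs_N^*)$ is integer-valued, it suffices to prove that $\Tr \phi(\Sigmabs_N \Sigmabs_N^*) \to 0$ almost surely. By Chebyshev and Borel-Cantelli, this reduces to establishing $\Ebb[\Tr \phi(\Sigmabs_N \Sigmabs_N^*)] = O(N^{-2})$ together with a variance estimate of the same order.

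The variance is handled by the Gaussian Poincaré inequality applied to $\W_N$: since $\phi$ is smooth and compactly supported, the map $\W_N \mapsto \Tr \phi(\Sigmabs_N \Sigmabs_N^*)$ has Lipschitz constant of order one, whence $\Var(\Tr \phi(\Sigmabs_N \Sigmabs_N^*)) = O(N^{-2})$. For the expectation, I would use the Helffer-Sjöstrand representation
\begin{align}
\Tr \phi(\Sigmabs_N \Sigmabs_N^*) = \frac{1}{\pi} \int_{\Cbb} \bar{\partial}\tilde{\phi}(z) \, \Tr\bigl(\Sigmabs_N \Sigmabs_N^* - z \I_M\bigr)^{-1}\, dA(z),
\notag
\end{align}
where $\tilde{\phi}$ is a smooth almost-analytic extension of $\phi$ supported in a thin complex neighbourhood of $\supp(\phi)$, which itself remains at positive distance from $\Scal_N$ for $N > N_0$. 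Subtracting $M \int \phi\, d\mu_N = 0$ and taking expectations, the problem reduces to a refined master equation of the form $\Ebb[\hat{m}_N(z)] - m_N(z) = O(N^{-2})$, uniformly for $z \in \supp(\tilde{\phi})$.

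The master equation itself is proved by two successive rounds of Gaussian integration by parts applied to $\Ebb\bigl[\frac{1}{M}\Tr(\Sigmabs_N \Sigmabs_N^* - z \I_M)^{-1}\bigr]$: the first round reproduces the fixed-point equation \eqref{definition:m_N} defining $m_N$, up to an error that can be written as covariances of resolvent traces; the second round, combined with the variance bound to replace traces by their expectations inside products, upgrades this error from $O(N^{-1})$ to $O(N^{-2})$. The main obstacle is to keep these estimates uniform as $|\Im z|$ becomes small, because the naive resolvent identities produce prohibitive factors of $|\Im z|^{-1}$. This is controlled by exploiting that $\supp(\tilde{\phi})$ stays at positive distance from $\Scal_N$ (and from $\cup_N \Scal_N$, which is bounded by corollary \ref{corollary:support_bounded}), together with the uniform lower bound $\Re(1 + \sigma^2 c_N m_N(z)) \geq 1/2$ of item \ref{item:re(b)} of theorem \ref{theorem:properties_m_N}, which prevents the denominators $1 + \sigma^2 c_N m_N(z)$ and $1 - \sigma^2 c_N f_N(w_N(z))$ from vanishing along the contour. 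A secondary technicality particular to $c_N = 1$ is dealt with by the regularity of $m_N$ and $w_N$ at $x = 0$ supplied by theorem \ref{theorem:properties_m_N} and lemma \ref{lemma:property_w}.
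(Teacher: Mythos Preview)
Your overall Haagerup--Thorbjornsen strategy is correct and is the one used in \cite{vallet2010sub}, which the present paper simply cites for this theorem; the relevant machinery appears here only in the proof sketches of lemmas~\ref{lemma:eq_exp_psi} and~\ref{lemma:var_psi}. However, two of your steps contain real gaps.

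\textbf{The variance bound.} The map $\W_N \mapsto \Tr\phi(\Sigmabs_N\Sigmabs_N^*)$ does not have Lipschitz constant $O(1)$: its gradient has squared Euclidean norm proportional to $\Tr\bigl[\phi'(\Sigmabs_N\Sigmabs_N^*)^2\Sigmabs_N\Sigmabs_N^*\bigr]$, which is a priori $O(M)$ since up to $M$ eigenvalues may lie in $\supp(\phi')$. Poincar\'e alone therefore gives only $\Var\bigl[\Tr\phi(\Sigmabs_N\Sigmabs_N^*)\bigr] = O(M/N) = O(1)$, which is not summable. The fix, as in lemma~\ref{lemma:var_psi}, is to feed the expectation estimate back in: Poincar\'e yields
\[
\Var\bigl[\Tr\phi(\Sigmabs_N\Sigmabs_N^*)\bigr] \leq \frac{C}{N}\,\Ebb\bigl[\Tr\phi'(\Sigmabs_N\Sigmabs_N^*)^2\Sigmabs_N\Sigmabs_N^*\bigr],
\]
and one then applies lemma~\ref{lemma:eq_exp_psi} to the test function $\lambda\mapsto\lambda\,\phi'(\lambda)^2$, which vanishes on $\Scal_N$, to conclude that the expectation on the right is $M\cdot O(N^{-2}) = O(N^{-1})$, hence $\Var = O(N^{-2})$. (A minor related slip: lemma~\ref{lemma:eq_exp_psi} gives $\Ebb[\Tr\phi] = O(N^{-1})$, not $O(N^{-2})$; this is still sufficient once $\Var = O(N^{-2})$.)

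\textbf{The master equation.} You cannot get $\Ebb[\hat m_N(z)] - m_N(z) = O(N^{-2})$ uniformly on $\supp(\tilde\phi)$ by invoking the distance to $\Scal_N$: the integration-by-parts remainders contain expectations of products of the \emph{random} resolvent $(\Sigmabs_N\Sigmabs_N^*-z)^{-1}$, whose norm is controlled only by $|\Im z|^{-1}$ regardless of where $z$ sits relative to $\Scal_N$; assuming the random eigenvalues already avoid $\supp(\phi)$ would be circular, and the bound $\Re(1+\sigma^2 c_N m_N)\geq 1/2$ concerns the deterministic $m_N$, not the random object. What is actually proved (see \eqref{sketch_proof_1}--\eqref{sketch_proof_3}) is a bound with polynomial blow-up,
\[
\bigl|\Ebb[\hat m_N(z)] - m_N(z)\bigr| \leq \frac{1}{N^{2}}\,\Prm_1(|z|)\,\Prm_2\bigl(|\Im z|^{-1}\bigr), \qquad z\in\Cbb^+,
\]
and it is the high-order vanishing $|\bar\partial\tilde\phi(z)| = O(|\Im z|^k)$ of the almost-analytic extension (equivalently, the smoothness of $\phi$ in the limit $y\downarrow 0$ of the Stieltjes inversion) that absorbs this blow-up in the integral---not the geometry of $\Scal_N$.
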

We remark that theorem \ref{theorem:no_eig} extends to semi-infinite intervals $[b, +\infty)$ because, as $\| {\bf W}_N {\bf W}_N^{*}\| \rightarrow (1+\sqrt{c})^{2}$ almost surely, 
then it holds that $\hat{\lambda}_{1,N} = \|  \boldsymbol{\Sigma}_N \boldsymbol{\Sigma}_N^* \| \leq \sup_N \| \B_N \B_N^{*} \| + 2 \sigma^{2}(1+\sqrt{c})^{2}$ almost surely 
for $N$ large enough. 

In order to interpret this result, assume that for each $N > N_1 \geq N_0$, the number of clusters of $\Scal_N$ does not depend on $N$ (denote $Q$ the number of clusters), 
and that for each $q=1, \ldots, Q$, the sequences $(x_{q,N}^{-})_{N > N_1}$  and $(x_{q,N}^{+})_{N > N_1}$ converge torwards limits $x_{q}^{-}$ and $x_{q}^{+}$ 
satisfying $x_{1}^{-} \leq x_{1}^{+} < x_{2}^{-} \leq  x_{2}^{+} < \ldots <  x_{Q}^{-} \leq x_{Q}^{+}$. 
In this context, theorem \ref{theorem:no_eig} implies that almost surely, for each $\epsilon > 0$, each eigenvalue belongs to one of the intervals $[x_q^{-}-\epsilon, x_{q}^{+}+\epsilon]$ 
for $N$ large enough.

We now establish the following property, also well-know in the literature and referred to as "exact separation" (see e.g. \cite{bai1999exact} in the context of non Gaussian correlated 
zero mean random matrices). 
\begin{theorem}
	\label{theorem:loc_eig}
	Let $a,b \in \mathbb{R}$, $\epsilon > 0$, $N_0 \in \mathbb{N}$ such that $(a- \epsilon, b + \epsilon) \cap \mathcal{S}_N = \emptyset$ for $N > N_0$. 
	Then, with probability one,
	\begin{align}
		\mathrm{card}\{k: \hat{\lambda}_{k,N} < a \} &= \mathrm{card}\{k: \lambda_{k,N} < w_N(a)\} 
		\label{eq:inferieur} \\
		\mathrm{card}\{k: \hat{\lambda}_{k,N} >  b \} &= \mathrm{card}\{k: \lambda_{k,N} > w_N(b)\} 
		\notag
	\end{align}
	for $N$ large enough.
\end{theorem}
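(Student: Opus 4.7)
The plan is to reduce the theorem to the exact mass identity
\begin{align}
  M \mu_N\bigl([x_{q,N}^-, x_{q,N}^+]\bigr) = \mathrm{card}\{k : \lambda_{k,N} \in (w_{q,N}^-, w_{q,N}^+)\}, \qquad q = 1, \ldots, Q_N,
  \label{eq:mass-id-plan}
\end{align}
and then to prove \eqref{eq:mass-id-plan} via a contour integral in the $w$-plane, following the approach introduced by Mestre \cite{mestre2008improved} in the zero-mean correlated context. Granting \eqref{eq:mass-id-plan}, the theorem follows by a counting argument: for $a,b$ as in the hypothesis and $N$ large, Theorem \ref{theorem:no_eig} ensures that no sample eigenvalue lies in $(a-\epsilon, b+\epsilon)$, so $\mathrm{card}\{k : \hat\lambda_{k,N} < a\}$ equals the total number of sample eigenvalues in the clusters located to the left of $a$. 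Applying the almost sure weak convergence $\hat\mu_N - \mu_N \xrightarrow{\mathcal{D}} 0$ of Theorem \ref{theorem:conv_mh_N} to a smooth cutoff equal to $1$ on a neighborhood of $\Scal_N \cap (-\infty, a-\epsilon]$ and vanishing on $[a,+\infty)$ shows that $\frac{1}{M}\mathrm{card}\{k : \hat\lambda_{k,N} < a\}$ and $\sum_{q : x_{q,N}^+ < a} \mu_N([x_{q,N}^-, x_{q,N}^+])$ differ by $o(1)$; since both quantities are multiples of $1/M$ (the right-hand side by \eqref{eq:mass-id-plan}), they must coincide for $N$ large. Since by items \ref{item:w_increase} and \ref{item:w_dS} of Lemma \ref{lemma:property_w}, $w_N$ is real and strictly increasing on $\mathbb{R}\setminus\Scal_N$ with $w_N(x_{q,N}^\pm) = w_{q,N}^\pm$, the value $w_N(a)$ falls in the gap between the $w$-intervals flanking $a$, so the resulting sum equals $\mathrm{card}\{k : \lambda_{k,N} < w_N(a)\}$. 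The identity for $b$ is analogous.

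To prove \eqref{eq:mass-id-plan}, I start from the Stieltjes inversion formula, $\mu_N([x_{q,N}^-, x_{q,N}^+]) = -\frac{1}{2\pi i}\oint_{\mathcal{C}_q} m_N(z)\,dz$, where $\mathcal{C}_q$ is a positively-oriented rectangular contour tightly enclosing the $q$-th cluster and no other. By item \ref{item:w_on_supp} of Lemma \ref{lemma:property_w}, the map $z\mapsto w_N(z)$ sends $\mathcal{C}_q$ to a closed Jordan curve $\gamma_q$ positively oriented around $(w_{q,N}^-, w_{q,N}^+)$, whose interior contains the eigenvalues $\lambda_{k,N}$ belonging to $(w_{q,N}^-, w_{q,N}^+)$ by item \ref{item:location_extrema} of Lemma \ref{lemma:property_phi_extrema}. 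Using \eqref{eq:canonique-w}--\eqref{eq:expre-b-1} to rewrite $m_N(\phi_N(w)) = f_N(w)/(1-\sigma^2 c_N f_N(w))$ and substituting $z = \phi_N(w)$, one obtains
\begin{align}
  \mu_N\bigl([x_{q,N}^-, x_{q,N}^+]\bigr) = -\frac{1}{2\pi i}\oint_{\gamma_q} \frac{f_N(w)\,\phi_N'(w)}{1 - \sigma^2 c_N f_N(w)}\,dw.
  \notag
\end{align}
A residue computation inside $\gamma_q$, in which the contributions at the enclosed simple poles of $f_N$ at the $\lambda_{k,N}$ combine with those at the real zeros of $1-\sigma^2 c_N f_N$ trapped between consecutive enclosed eigenvalues (none of them lying on $\gamma_q$ itself, by item \ref{item:ineq_f_w} of Lemma \ref{lemma:property_w}), yields exactly $N_q/M$ and thus \eqref{eq:mass-id-plan}.

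The main technical obstacle is to justify the change of variable up to the endpoints $x_{q,N}^\pm$, where $\gamma_q$ meets the real axis tangentially because $w_N'$ has an integrable square-root singularity (item \ref{item:domination_der_w} of Lemma \ref{lemma:property_w}). I would therefore first carry out the residue argument on the shifted rectangle at imaginary height $\eta > 0$, for which $w_N$ maps into a contour staying strictly off the real axis, and then pass to the limit $\eta \downarrow 0$, using the square-root bound to dominate the contributions near the endpoints. A secondary subtlety arises in the case $q=1$ with $c_N = 1$, where $x_{1,N}^- = 0 = w_{1,N}^-$ coincides with a pole of $f_N$ of order $M-K$ induced by the zero eigenvalues of $\B_N\B_N^*$: the continuity of $w_N$ at $0$ provided by item \ref{item:left-continuity} of Lemma \ref{lemma:property_w} is precisely what is needed to close the contour past the origin, and the extra residue at $w=0$ then contributes $M-K$ to the count, consistent with the fact that the zero eigenvalues are associated to the first cluster in this situation.
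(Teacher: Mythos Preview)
Your contour-integral computation of the cluster masses is essentially the paper's argument (the paper works directly on the real axis, parametrizing the $w$-contour by $x\mapsto w_N(x)$ on $[x_{q,N}^-,x_{q,N}^+]$ and invoking the square-root bound of Lemma \ref{lemma:property_w}\,(\ref{item:domination_der_w}) to justify integrability, but the residue calculus is the same). One small slip: in the $c_N=1$ discussion you write ``$x_{1,N}^-=0=w_{1,N}^-$'', but $w_{1,N}^-<0$ always (Lemma \ref{lemma:property_phi_extrema}); the delicate point when $c_N=1$ is rather that $w_{1,N}^-$ coincides with the zero $z_{0,N}$ of $1-\sigma^2 c_N f_N$ (Lemma \ref{lemma:property_phi_zeros}), and the paper handles this by observing a pole--zero cancellation in the integrand.

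The genuine gap is in your counting step. You invoke only the almost sure weak convergence $\hat\mu_N-\mu_N\xrightarrow{\mathcal{D}}0$ of Theorem \ref{theorem:conv_mh_N}, obtain that $\frac{1}{M}\,\mathrm{card}\{k:\hat\lambda_{k,N}<a\}$ and $\sum_{q:x_{q,N}^+<a}\mu_N([x_{q,N}^-,x_{q,N}^+])$ differ by $o(1)$, and then conclude that since both lie in $\frac{1}{M}\mathbb{Z}$ they must agree for large $N$. This inference is false: two sequences in $\frac{1}{M}\mathbb{Z}$ whose difference is $o(1)$ need not eventually coincide (take $0$ and $\lfloor\sqrt{M}\rfloor/M$). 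What is actually needed is that the difference be $o(1/M)$, i.e.\ that after multiplying by $M$ one gets a sequence of integers tending to $0$. The paper obtains this from the Gaussian-specific Lemmas \ref{lemma:eq_exp_psi} and \ref{lemma:var_psi} (integration by parts plus Poincar\'e inequality), which give $\mathbb{E}[\frac{1}{M}\Tr\psi_a(\Sigmabs_N\Sigmabs_N^*)]-\mu_N([0,a-\eta])=\mathcal{O}(N^{-2})$ and $\Var[\frac{1}{M}\Tr\psi_a(\Sigmabs_N\Sigmabs_N^*)]=\mathcal{O}(N^{-4})$; Markov plus Borel--Cantelli then yield an almost sure rate $\mathcal{O}(N^{-4/3})$, so that $\Tr\psi_a(\Sigmabs_N\Sigmabs_N^*)-M\mu_N([0,a-\eta])=\mathcal{O}(N^{-1/3})$ is an integer converging to $0$, hence eventually zero. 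Without this quantitative input---which is precisely where Assumption \textbf{A-2} enters---your argument does not close.
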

Under the above simplified assumptions, this result means that almost surely for $N$ large enough, the number of sample eigenvalues that belong to each interval 
$[x_q^{-}-\epsilon, x_{q}^{+}+\epsilon]$ coincides with the number of eigenvalues of ${\bf B}_N {\bf B}_N^{*}$ that are associated to the cluster  $[x_{q,N}^{-}, x_{q,N}^{+}]$.
To prove theorem \ref{theorem:loc_eig}, we use the same technic as in \cite{vallet2010sub}, where a less general result is presented in the case $c < 1$.

	\subsection{Preliminary results}
	
We first need to state preliminary useful lemmas. The first lemma is elementary and is related to the solutions of the equation $1 - \sigma^{2} c_N f_N(w) = 0$. 
\begin{lemma}
	\label{lemma:property_phi_zeros}
	The equation $1 - \sigma^{2} c_N f_N(w) = 0$ admits $ K + 1$ real solutions $z_{0,N} < 0 < z_{1,N} <  \lambda_{1,N} < \ldots < z_{K,N} < \lambda_{K,N}$. 
	If $c_N < 1$, $z_{0,N} < w_{1,N}^{-}$ while if $c_N = 1$, $z_{0,N} = w_{1,N}^{-}$. 
	Moreover, for each $k=1,\ldots,K$, each solution $z_{k,N}$ belongs to the interval $(w_{q,N}^-, w_{q,N}^+)$ containing eigenvalue $\lambda_{k,N}$, 
	with $q \in \{1,\ldots,Q_N\}$.
\end{lemma}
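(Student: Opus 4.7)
The plan is to handle the equation by elementary calculus first, then transfer the conclusions into the geometry of $\phi_N$ via the factorisation that is built into the definition of $\phi_N$. Using the partial fraction decomposition
\[
f_N(w) = \frac{1}{M}\sum_{k=1}^{K}\frac{1}{\lambda_{k,N}-w} - \frac{M-K}{Mw},
\]
one immediately sees that $f_N'(w)>0$ away from its poles $\{0,\lambda_{K,N},\ldots,\lambda_{1,N}\}$. Computing the one-sided limits at each pole shows that $f_N$ sweeps $(-\infty,+\infty)$ monotonically on every open interval between two consecutive poles, except on $(-\infty,0)$ (where it goes from $0^+$ to $+\infty$) and on $(\lambda_{1,N},+\infty)$ (where it goes from $-\infty$ to $0^-$). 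The intermediate value theorem applied to $\sigma^2 c_N f_N(w)=1$ then produces exactly one solution in each of the $K+1$ intervals whose range covers $1/(\sigma^2 c_N)$, and none in $(\lambda_{1,N},+\infty)$; the resulting interlacing with the poles of $f_N$ is precisely the one asserted for $(z_{k,N})_{k=0,\ldots,K}$.

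The second step exploits the factorisation
\[
\phi_N(w) = b(w)\bigl[w\,b(w) + \sigma^2(1-c_N)\bigr],\qquad b(w):=1-\sigma^2 c_N f_N(w),
\]
which shows that every $z_{k,N}$ is a zero of $\phi_N$. Differentiating and using $b(z_{k,N})=0$, all terms proportional to $b$ vanish and one gets $\phi_N'(z_{k,N})=-\sigma^4 c_N(1-c_N)\,f_N'(z_{k,N})$, strictly negative for $c_N<1$ and identically zero for $c_N=1$. When $c_N<1$, Lemma \ref{lemma:property_phi_extrema} gives $\phi_N'\ge 0$ on each of the increasing branches $(-\infty,w_{1,N}^-]$, $[w_{q,N}^+,w_{q+1,N}^-]$ and $[w_{Q_N,N}^+,+\infty)$; the negativity of $\phi_N'(z_{k,N})$ therefore forces each $z_{k,N}$ into one of the open cluster intervals $(w_{q,N}^-,w_{q,N}^+)$. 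The sign of $z_{k,N}$ together with the interlacing obtained in Step 1 then pins down the correct cluster, which for $k\ge 1$ is by item \ref{item:location_extrema} of Lemma \ref{lemma:property_phi_extrema} the one containing $\lambda_{k,N}$, while for $k=0$ the constraint $z_{0,N}<0<w_{1,N}^+$ singles out $q=1$, yielding the claimed comparison with $w_{1,N}^-$.

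For $c_N=1$ the factorisation collapses to $\phi_N(w)=w\,b(w)^2$, Theorem \ref{theorem:support} yields $x_{1,N}^-=\phi_N(w_{1,N}^-)=0$, and since $w_{1,N}^-<0$ this forces $b(w_{1,N}^-)=0$; being the unique negative zero of $b$, $w_{1,N}^-$ must coincide with $z_{0,N}$. The analogous argument at the other negative critical points handles the case $c_N = 1$ for $k\ge 1$ as well. The delicate point I expect is the cluster assignment of $z_{k,N}$ for $k\ge 1$ in configurations where $\lambda_{k,N}$ and its neighbouring eigenvalue of $\B_N\B_N^*$ belong to different cluster intervals: knowing only that $z_{k,N}$ lies in some cluster interval leaves two candidates a priori, and excluding the wrong one requires a finer analysis of the sign of $w\,b(w)+\sigma^2(1-c_N)$ across the increasing branch separating the two clusters, rather than just the derivative identity used above.
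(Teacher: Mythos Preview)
The paper calls this lemma ``elementary'' and gives no proof, so there is nothing to compare against; your approach is the natural one and is correct as far as it goes. Two points deserve comment.

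First, your argument for $k=0$ actually yields $z_{0,N}\in(w_{1,N}^-,0)$, i.e.\ $z_{0,N}>w_{1,N}^-$, which is the \emph{opposite} of the inequality printed in the lemma. This is a typo in the paper: the residue computation in Section~\ref{sec:exact} for $q=1$ explicitly requires $z_{0,N}$ to lie inside the contour $\mathcal{C}_{1,N}$, hence in $(w_{1,N}^-,w_{1,N}^+)$, and the toy case $K=0$ (where $z_{0,N}=-\sigma^2 c_N$ and $w_{1,N}^-=-\sigma^2\sqrt{c_N}$) confirms $w_{1,N}^-<z_{0,N}$ for $c_N<1$. So your derivation gives the correct statement, and you should not try to match the printed inequality.

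Second, the gap you flag is real but closes cleanly once you use the sign of $b$ itself rather than of $wb+\sigma^2(1-c_N)$. Item~\ref{item:w_eq_phi} of Lemma~\ref{lemma:property_w} (or equivalently item~\ref{item:re(b)} of Theorem~\ref{theorem:properties_m_N} together with \eqref{eq:expre-b-1}) gives $b(w)=1-\sigma^2 c_N f_N(w)>0$ on every increasing branch $[w_{q',N}^+,w_{q,N}^-]$. If $\lambda_{k+1,N}$ and $\lambda_{k,N}$ lie in adjacent clusters $q'$ and $q=q'+1$, the branch sits inside $(\lambda_{k+1,N},\lambda_{k,N})$, where $b$ is strictly decreasing with its unique zero at $z_{k,N}$; positivity of $b$ on the branch then forces $w_{q,N}^-<z_{k,N}$, hence $z_{k,N}\in(w_{q,N}^-,\lambda_{k,N})\subset(w_{q,N}^-,w_{q,N}^+)$. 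The same argument disposes of the case $c_N=1$, $k\ge1$: there $\phi_N(z_{k,N})=0$ while $\phi_N>0$ on every increasing branch between positive clusters, so $z_{k,N}$ lies in some cluster interval, and the positivity of $b$ on the separating branch again forces it to be the one containing $\lambda_{k,N}$.
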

The next two lemmas are fundamental, and were proved by Haagerup-Thorbjornsen in \cite{haagerup2005new} in the Wigner case models (see also \cite{capitaine2007freeness}). 
Lemma \ref{lemma:eq_exp_psi} and \ref{lemma:var_psi} are established in \cite[Prop. 4, Lem. 2 and proof of Th.3]{vallet2010sub}.
Note that, unlike section \ref{sec:support}, the Gaussian assumption is required here.
We give here some insights on the proof of these two lemmas for the reader's convenience.
\begin{lemma}
	\label{lemma:eq_exp_psi}
	Let $\psi \in \mathcal{C}_c^\infty(\mathbb{R},\mathbb{R})$, independent of $N$, then
	\begin{align}
		\mathbb{E}\left[\frac{1}{M} \Tr \psi \left(\Sigmabs_N \Sigmabs_N^*\right)\right]
		-
		\int_{\mathcal{S}_N} \psi(\lambda) \mathrm{d}\mu_N(\lambda)
		=
		\mathcal{O}\left(\frac{1}{N^2}\right).
		\notag
	\end{align}
\end{lemma}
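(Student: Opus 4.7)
The plan is to use the Helffer-Sjöstrand formula to reduce the problem to a sharper-than-usual estimate on the difference $\mathbb{E}[\hat m_N(z)] - m_N(z)$ of the Stieltjes transforms, valid uniformly on a neighborhood of a compact set containing $\mathrm{supp}(\psi)$ and with controlled blow-up as $\Im z \to 0$. Since $\psi \in \Ccal_c^\infty(\Rbb,\Rbb)$, we choose an almost-analytic extension $\tilde\psi$ that vanishes to any prescribed order on the real axis, so that
\begin{align}
\frac{1}{M}\Tr\psi(\Sigmabs_N\Sigmabs_N^*) &= \frac{1}{\pi}\int_{\Cbb}\bar\partial\tilde\psi(z)\,\hat m_N(z)\,\drm x\,\drm y,\notag\\
\int\psi(\lambda)\,\drm\mu_N(\lambda) &= \frac{1}{\pi}\int_{\Cbb}\bar\partial\tilde\psi(z)\,m_N(z)\,\drm x\,\drm y.\notag
\end{align}
Taking expectations and subtracting, the task reduces to showing that
\[
\left|\Ebb[\hat m_N(z)] - m_N(z)\right| \leq \frac{C\,P(|\Im z|^{-1})}{N^2},
\]
for some fixed polynomial $P$, uniformly on $\{z:\,\Re z\in\mathrm{supp}\,\psi\}$; the decay of $\bar\partial\tilde\psi(z)$ in $|\Im z|$ (which we may choose as high as needed) will then kill the polynomial and yield the $\mathcal O(N^{-2})$ bound after integration.

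The core of the proof is the resolvent analysis. Write $\Q_N(z)=(\Sigmabs_N\Sigmabs_N^*-z\I_M)^{-1}$ and apply the Gaussian Poincaré-Nash inequality to every entry of $\Q_N$ in order to show $\mathrm{Var}(\tfrac{1}{M}\Tr\Q_N(z))=\mathcal O(N^{-2}|\Im z|^{-q})$ for some integer $q$, which is the standard reason only one factor of $1/N$ is lost in the variance of linear statistics of Gaussian random matrices. Then I would use the complex Gaussian integration-by-parts formula on $\Ebb[\Q_N(z)]$ entry-wise to derive an approximate fixed-point equation of the form
\begin{align}
\Ebb[\Q_N(z)] = \R_N(z) + \Deltabs_N(z),\notag
\end{align}
where $\R_N(z)$ denotes the deterministic equivalent resolvent associated with \eqref{definition:m_N}, and where $\Deltabs_N(z)$ is an error term whose entries, by the previous variance estimate, are bounded by $C/(N^2|\Im z|^{r})$ for some $r$. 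Taking normalized traces and comparing with the deterministic equation satisfied by $m_N(z)$ from Theorem \ref{theorem:conv_mh_N}, one obtains a perturbed equation for $\Ebb[\hat m_N(z)] - m_N(z)$. Inverting this linear relation requires the perturbation factor $1+\sigma^2 c_N m_N(z)$ to stay away from zero, which is exactly the content of item \ref{item:re(b)} of Theorem \ref{theorem:properties_m_N} (this is why that refinement was needed), and yields the desired polynomial bound in $|\Im z|^{-1}$.

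The main obstacle is the control of the polynomial blow-up in $|\Im z|^{-1}$: the integration-by-parts calculation produces many resolvent factors whose norms are only bounded by $|\Im z|^{-1}$, and one must carefully track the exponent $r$ so that it remains independent of $N$, in order for the almost-analytic extension $\tilde\psi$ (which can be chosen to cancel any prescribed power of $|\Im z|$) to absorb it. A secondary technical point is that, although $\psi$ has compact support, the integration-by-parts identities involve the full resolvent; however, since $\sup_N \|\Sigmabs_N\Sigmabs_N^*\|<\infty$ almost surely and in expectation (bounded by $\sup_N\|\B_N\|^2+2\sigma^2(1+\sqrt c)^2$, as noted in the remark following Theorem \ref{theorem:no_eig}), one can truncate by multiplying by a smooth indicator of a large compact set at negligible cost, so the argument localizes to the region where $\bar\partial\tilde\psi$ is supported. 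Once these pieces are assembled, integrating the pointwise bound against $\bar\partial\tilde\psi$ gives the $\mathcal O(N^{-2})$ conclusion.
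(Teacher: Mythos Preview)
Your proposal is correct and follows essentially the same route as the paper: derive $\Ebb[\hat m_N(z)] - m_N(z) = N^{-2}\chi_N(z)$ with $|\chi_N(z)|$ polynomially bounded in $|z|$ and $|\Im z|^{-1}$ via Gaussian integration by parts and the Poincar\'e inequality, then pass to smooth $\psi$ by an almost-analytic extension argument (you phrase it as Helffer--Sj\"ostrand; the paper uses Stieltjes inversion combined with the Haagerup--Thorbj\o rnsen trick of \cite{haagerup2005new,capitaine2007freeness}, which is the same device). One minor remark: your ``secondary technical point'' about truncating by a smooth indicator is unnecessary, since $\tilde\psi$ already has compact support and the resolvent bounds are only required for $z\in\supp\tilde\psi$.
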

\begin{proof}      
        Using the integration by part formula (see e.g. \cite{novikov1965functionals}, \cite{pastur2005simple}) and the Poincaré inequality for Gaussian random vectors 
        \cite{chen1982inequality}, it is proved in \cite[Prop.4]{vallet2010sub} that
        \begin{align}
        	\Ebb\left[\hat{m}_N(z)\right] = m_N(z) + \frac{\chi_N(z)}{N^2},
        	\label{sketch_proof_1}
        \end{align}
        where $\chi_N$ is holomorphic on $\Cbb\backslash\Rbb$ and satisfies
        \begin{align}
	\left|\chi_N(z)\right|\leq \Prm_1\left(|z|\right)\Prm_2\left(\left|\Im(z)^{-1}\right|\right), 
	\label{sketch_proof_3}
        \end{align}
        with $\Prm_1$, $\Prm_2$ two polynomials with positive coefficients independent of $N,z$.
        The Stieltjes inversion formula gives
        \begin{align}
        	\mathbb{E}\left[\frac{1}{M} \Tr \psi \left(\Sigmabs_N \Sigmabs_N^*\right)\right]
        	= 
        	\frac{1}{\pi} \lim_{y\downarrow 0} \Im\left(\int_{\Rbb} \psi(x) \Ebb\left[\hat{m}_N(x+iy)\right]\drm x\right),
        	\label{sketch_proof_2}
        \end{align}
        as well as $\int_{\Rbb} \psi(\lambda)\drm \mu_N(\lambda) = \pi^{-1} \lim_{y\downarrow 0} \Im\left(\int_{\Rbb} \psi(x) m_N(x+iy)\drm x\right)$.
        The polynomial bound \eqref{sketch_proof_3} implies the bound $\limsup_{y \downarrow 0} \int_{\Rbb}\psi(x)\left|\chi_N(x+iy)\right|\drm x \leq C < \infty$, with $C>0$ independent 
        of $N$ (a result shown in \cite[Sec.3.3]{capitaine2007freeness} using the ideas of \cite{haagerup2005new}).
        Plugging \eqref{sketch_proof_1} into \eqref{sketch_proof_2}, we obtain the desired result.
\end{proof}
Lemma \ref{lemma:var_psi} is not explicitely stated  in \cite{vallet2010sub}, but it can be proved easily using the derivation of \cite[eq. (37)]{vallet2010sub}. 
\begin{lemma}
	\label{lemma:var_psi}
	Let $\psi \in \mathcal{C}_c^\infty(\mathbb{R},\mathbb{R})$, independent of $N$ and constant on each cluster of $\mathcal{S}_N$ for $N$ large enough.
	Then, we have
	\begin{align}
		\Var\left[\frac{1}{M} \Tr \psi \left(\Sigmabs_N \Sigmabs_N^*\right)\right] = \mathcal{O}\left(\frac{1}{N^4}\right).
		\notag
	\end{align}
\end{lemma}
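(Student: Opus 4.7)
The plan is to combine the Poincar\'e inequality for the Gaussian matrix $\W_N$ with a carefully chosen application of Lemma \ref{lemma:eq_exp_psi}. The Poincar\'e inequality alone produces a factor of order $1/N^2$, while applying Lemma \ref{lemma:eq_exp_psi} to a well-chosen derivative-type test function will produce an additional $1/N^2$ factor, yielding the announced $\mathcal{O}(1/N^4)$ rate.

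First, I would differentiate $\Tr\psi(\Sigmabs_N\Sigmabs_N^*)$ with respect to the entries of $\W_N$. Using the Wirtinger derivatives $\partial_{W_{ij}}\Sigmabs_N = \sigma\,\mathbf{e}_i\mathbf{e}_j^T$ together with spectral calculus, a direct computation gives
\begin{align}
\sum_{i,j}\Bigl|\partial_{W_{ij}}\Tr\psi(\Sigmabs_N\Sigmabs_N^*)\Bigr|^2 + \sum_{i,j}\Bigl|\partial_{\overline{W_{ij}}}\Tr\psi(\Sigmabs_N\Sigmabs_N^*)\Bigr|^2 = 2\sigma^2\Tr\Bigl[\Sigmabs_N\Sigmabs_N^*\,\psi'(\Sigmabs_N\Sigmabs_N^*)^2\Bigr].
\notag
\end{align}
Since the entries of $\W_N$ are $\mathcal{CN}(0,1/N)$, the Gaussian Poincar\'e inequality \cite{chen1982inequality} applied to the functional $\W_N \mapsto \frac{1}{M}\Tr\psi(\Sigmabs_N\Sigmabs_N^*)$ then yields
\begin{align}
\Var\left[\frac{1}{M}\Tr\psi(\Sigmabs_N\Sigmabs_N^*)\right] \leq \frac{C}{NM^2}\,\mathbb{E}\Bigl[\Tr\bigl(\Sigmabs_N\Sigmabs_N^*\,\psi'(\Sigmabs_N\Sigmabs_N^*)^2\bigr)\Bigr] = \frac{C}{NM}\,\mathbb{E}\left[\frac{1}{M}\Tr\phi(\Sigmabs_N\Sigmabs_N^*)\right],
\notag
\end{align}
where I set $\phi(x) := x\,\psi'(x)^2$. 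This is essentially the computation carried out in \cite[eq. (37)]{vallet2010sub}.

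The decisive second step is to observe that $\phi$ satisfies the hypotheses of Lemma \ref{lemma:eq_exp_psi}: it lies in $\mathcal{C}_c^\infty(\mathbb{R},\mathbb{R})$, is independent of $N$, and, crucially, vanishes identically on $\mathcal{S}_N$ for $N$ large enough. Indeed, since $\psi$ is constant on each cluster of $\mathcal{S}_N$, its derivative $\psi'$ vanishes on every cluster, hence on the whole support of $\mu_N$. Therefore $\int \phi\, \mathrm{d}\mu_N = 0$, and Lemma \ref{lemma:eq_exp_psi} gives
\begin{align}
\mathbb{E}\left[\frac{1}{M}\Tr\phi(\Sigmabs_N\Sigmabs_N^*)\right] = \int_{\mathcal{S}_N}\phi(\lambda)\,\mathrm{d}\mu_N(\lambda) + \mathcal{O}(1/N^2) = \mathcal{O}(1/N^2).
\notag
\end{align}
Combining with the Poincar\'e bound and using $M/N \to c > 0$ produces $\Var\bigl[\frac{1}{M}\Tr\psi(\Sigmabs_N\Sigmabs_N^*)\bigr] = \mathcal{O}(1/N^2)\cdot\mathcal{O}(1/N^2) = \mathcal{O}(1/N^4)$, as required.

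The crux of the argument, and the only place where the cluster-constancy hypothesis is really used, is the identification of the Poincar\'e gradient-squared with a trace of a $\mathcal{C}_c^\infty$ test function that vanishes on $\supp(\mu_N)$. This is what allows a second $1/N^2$ saving to be extracted by reapplying Lemma \ref{lemma:eq_exp_psi}, rather than settling for the trivial $\mathcal{O}(1)$ estimate of $\mathbb{E}[\frac{1}{M}\Tr\phi(\Sigmabs_N\Sigmabs_N^*)]$. The only routine technical point requiring careful execution is the Wirtinger-derivative computation itself.
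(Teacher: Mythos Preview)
Your proof is correct and follows exactly the same approach as the paper's: apply the Gaussian Poincar\'e inequality to obtain the bound $\frac{C}{N^2}\,\mathbb{E}\bigl[\frac{1}{M}\Tr\bigl(\Sigmabs_N\Sigmabs_N^*\,\psi'(\Sigmabs_N\Sigmabs_N^*)^2\bigr)\bigr]$, then invoke Lemma~\ref{lemma:eq_exp_psi} on the test function $\lambda\mapsto\lambda\psi'(\lambda)^2$, which vanishes on $\mathcal{S}_N$ by the cluster-constancy hypothesis, to extract the second factor $\mathcal{O}(1/N^2)$. Your exposition is in fact more detailed than the paper's sketch, spelling out the Wirtinger-derivative computation and explicitly isolating where the hypothesis on $\psi$ enters.
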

\begin{proof}
	We only give a sketch of proof for the reader's convienence. Using the Poincaré inequality for gaussian random vectors, we obtain
	\begin{align}
		\Var\left[\frac{1}{M} \Tr \psi \left(\Sigmabs_N \Sigmabs_N^*\right)\right]
		\leq
		\frac{C}{N^2} \Ebb\left[\frac{1}{M}\Tr \psi'\left(\Sigmabs_N \Sigmabs_N^*\right)^2\Sigmabs_N\Sigmabs_N^*\right]
		=
		\frac{C}{N^2} \left(\int_{\Rbb} \lambda \psi'(\lambda)^2 \drm \mu_N(\lambda) + \Ocal\left(\frac{1}{N^2}\right)\right),
		\notag
	\end{align}
	where the last equality follows from the application of lemma \ref{lemma:eq_exp_psi} to the function $\lambda \mapsto \lambda \psi'(\lambda)^2$. The conclusion follows from the observation 
that this function vanishes on ${\cal S}_N$ for all large $N$. 
\end{proof}

We are now in position to prove theorem \ref{theorem:loc_eig}.

	\subsection{End of the proof}

We first prove \eqref{eq:inferieur} and assume that $a > 0$ because (\ref{eq:inferieur}) is obvious if $a \leq 0$. 
We consider $\eta < \epsilon$ and assume without restriction that $0 < \eta < a$. 
We consider a function $\psi_a \in \mathcal{C}_c^\infty(\mathbb{R},\mathbb{R})$, independent of $N$, such that $\psi_a \in [0,1]$ and
\begin{align}
	\psi_a(\lambda) =
	\begin{cases}
		1 &\quad \forall \lambda \in \left[0, a - \eta\right] 
		\notag \\
		0 &\quad \forall \lambda \geq a.
		\notag
	\end{cases}
\end{align}
By lemma \ref{lemma:eq_exp_psi}, we have
\begin{align}
	\mathbb{E}\left[\frac{1}{M} \Tr \psi_a \left(\boldsymbol{\Sigma}_N \boldsymbol{\Sigma}_N^*\right)\right]
	-
	\int_{\mathbb{R}^+} \psi_a (\lambda) \mathrm{d}\mu_N(\lambda)
	=
	\mathcal{O}\left(\frac{1}{N^2}\right),
	\notag
\end{align}
or equivalently
\begin{align}
	\mathbb{E}\left[\frac{1}{M} \Tr \psi_a \left(\boldsymbol{\Sigma}_N \boldsymbol{\Sigma}_N^*\right)\right]
	=
	\mu_N\left([0,a-\eta]\right) + \mathcal{O}\left(\frac{1}{N^2}\right).
	\notag
\end{align}
Lemma \ref{lemma:var_psi} also implies that
\begin{align}
	\Var\left[\frac{1}{M} \Tr \psi_a \left(\boldsymbol{\Sigma}_N \boldsymbol{\Sigma}_N^*\right)\right] = \mathcal{O}\left(\frac{1}{N^4}\right).
	\notag
\end{align}
Therefore, Markov inequality leads to
\begin{align}
	&\mathbb{P}
	\left(
		\left|\frac{1}{M} \Tr \psi_a \left(\boldsymbol{\Sigma}_N \boldsymbol{\Sigma}_N^*\right) -\mu_N\left([0,a-\eta]\right)\right| > \frac{1}{N^{4/3}}
	\right)
	\notag\\
	&\qquad\leq
	N^{8/3} \Var\left[\frac{1}{M} \Tr \psi_a \left(\boldsymbol{\Sigma}_N \boldsymbol{\Sigma}_N^*\right)\right]
	+
	N^{8/3}\left|\mathbb{E}\left[\frac{1}{M} \Tr \psi_a \left(\boldsymbol{\Sigma}_N \boldsymbol{\Sigma}_N^*\right)-\mu_N\left([0,a-\eta]\right)\right]\right|^2
	\notag\\
	&\qquad= \mathcal{O}\left(\frac{1}{N^{4/3}}\right),
	\notag
\end{align}
which implies that with probability one,
\begin{align}
	\frac{1}{M} \Tr \psi_a \left(\boldsymbol{\Sigma}_N \boldsymbol{\Sigma}_N^*\right) = \mu_N\left([0,a-\eta]\right) + \mathcal{O}\left(\frac{1}{N^{4/3}}\right).
	\label{eq:tr_psi_tmp}
\end{align}
The remainder of the proof is dedicated to the evaluation of $\mu_N([0,a-\eta])$.
Let $I_N = \max\{q: x_{q,N}^+ < a\}$. 
It is clear that $\mu_N([0,a-\eta]) = \sum_{q=1}^{I_N} \mu_N([x_{q,N}^-,x_{q,N}^+])$ because $\mu_N((a-\eta,a)) = 0$. 
By theorem \ref{theorem:properties_m_N}, $\mu_N$ is absolutely continuous with density $\pi^{-1}\Im(m_N(x))$. Therefore, it holds that
\begin{align}
	\label{eq:expre-masse-cluster}
	\mu_N([x_{q,N}^-,x_{q,N}^+]) = \frac{1}{\pi} \Im\left(\int_{x_{q,N}^-}^{x_{q,N}^+} m_N(x) \mathrm{d}x \right).
\end{align}
In order to evaluate the righthandside of \eqref{eq:expre-masse-cluster}, we use the contour integral approach introduced in \cite{vallet2010sub}. 
For this, we consider the curve $\mathcal{C}_{q,N}$ defined by
\begin{align}
	\mathcal{C}_{q,N} = \left\{w_N(x): x \in [x_{q,N}^-,x_{q,N}^+]\right\} \cup \left\{w_N(x)^*: x \in [x_{q,N}^-,x_{q,N}^+]\right\}.
	\notag
\end{align}
We notice that $x \rightarrow w_N(x)$ (resp.  $x \rightarrow w_N(x)^{*}$) is a one-to-one correspondance from 
$(x_{q,N}^{-}, x_{q,N}^{+})$ onto $\{ w_N(x), x \in (x_{q,N}^{-}, x_{q,N}^{+}) \}$ (resp.   $\{ w_N(x)^{*}, x \in (x_{q,N}^{-}, x_{q,N}^{+}) \}$)
because if $w_N(x) = w_N(y)$, then $\phi_N(w_N(x)) = x = \phi_N(w_N(y)) = y$ (see lemma \ref{lemma:property_w}, item \ref{item:w_eq_phi}). 

It follows from lemma \ref{lemma:property_w} items \ref{item:regularity_w},  \ref{item:w_on_supp} and \ref{item:w_dS} that $\mathcal{C}_{q,N}$ is a closed continuous contour enclosing 
the interval $(w_{q,N}^-,w_{q,N}^+)$. $\mathcal{C}_{q,N}$ is differentiable at each point except at $w_{q,N}^{-}$ and $w_{q,N}^{+}$ (see item \ref{item:domination_der_w} of lemma 
\ref{lemma:property_w}). 
However,  \eqref{eq:comportement-w'-moins} and \eqref{eq:comportement-w'-plus} imply that  $|w'_N|$ is summable on $[x_{q,N}^-,x_{q,N}^+]$. 
Therefore, for each function $g$ continuous in a neighborhood of $\mathcal{C}_{q,N}$, satisfying $(g(w))^{*} = g(w^{*})$,  it is still possible to define the contour integral 
$\oint_{\mathcal{C}_{q,N}^-} g(w) dw$ by
\begin{align}
	\oint_{\mathcal{C}_{q,N}^-} g(w) dw  =  2i \mathrm{Im} \left( \int_{x_{q,N}^-}^{x_{q,N}^+} g(w_N(x)) w'_N(x) \mathrm{d}x \right).
	\notag
\end{align}
The notation $\mathcal{C}_{q,N}^-$ means that the contour $ \mathcal{C}_{q,N}$ is clockwise oriented. 
Although $\mathcal{C}_{q,N}$ is not differentiable, the main results related to contour integrals of meromorphic functions remain valid. 
In particular, it holds that
\begin{align}
	\mathrm{Ind}_{\mathcal{C}_{q,N}^-} (\xi) = \frac{1}{2 \pi i} \int_{\mathcal{C}_{q,N}^-} \frac{\mathrm{d}\lambda}{\xi - \lambda}
	=
	\begin{cases}
		1 &\quad \text{if} \ \xi \in \left(w_{q,N}^-,w_{q,N}^+ \right)
		\notag \\
		0 & \quad \text{if} \ \xi \not \in \left[w_{q,N}^-,w_{q,N}^+ \right]
		\notag
	\end{cases}
	\notag
\end{align}
In order to evaluate the righthandside of \eqref{eq:expre-masse-cluster} using a contour integral, we remark that
\begin{align}
	m_N(x) = \frac{f_N(w_N(x))}{1 - \sigma_N^2 c_N f_N(w_N(x))} \quad \forall x \in \mathbb{R}\backslash \partial\mathcal{S}_N
	\notag
\end{align}
(see \eqref{eq:canonique-w} and item \ref{item:ineq_f_w} of lemma \ref{lemma:property_w}). 
Moreover, by item \ref{item:w_eq_phi} of lemma \ref{lemma:property_w}, we have $w_N'(x) \phi'_N(w_N(x)) = 1$ on $(x_{q,N}^-,x_{q,N}^+)$. 
Therefore, we have
\begin{align}
	\mu_N([x_{q,N}^-,x_{q,N}^+]) = 
	\frac{1}{\pi} 
	\Im
	\left(
		\int_{x_{q,N}^-}^{x_{q,N}^+}  g_N(w_N(x)) w'_N(x) \mathrm{d}x 
	\right),
	\label{eq:expre-masse-integrale-1}
\end{align}
where $g_N(w)$ is the rational function defined by
\begin{align}
	g_N(w) =  
	\frac{f_N(w) \phi'_N(w)}{1 - \sigma_N^2 c_N f_N(w)} 
	= 	
	f_N(w) 
	\frac{(1 - \sigma^2 c_N f_N(w))^2 - 2 \sigma_N^2 c_N w f'_N(w) (1 - \sigma_N^2 c_N f_N(w)) - \sigma_N^4 c_N (1-c_N) f_N'(w)}{1 - \sigma_N^2 c_N f_N(w)}.
	\notag
\end{align}
In order to justify the existence of the integral at the righthandside of \eqref{eq:expre-masse-integrale-1}, we prove that $g_N(w)$ is continuous in a neighborhood of $\Ccal_{q,N}$. 
We first note that the poles of $g_N(w)$ coincide with the eigenvalues of ${\bf B}_N {\bf B}_N^{*}$ and the zeros $(z_{k,N})_{k=0, \ldots, K}$ of $1 - \sigma_N^2 c_N f_N(w)$. 
As $w_N(x)$ is not real on  $(x_{q,N}^-,x_{q,N}^+)$, $x \rightarrow g_N(w_N(x))$ is continuous on  $(x_{q,N}^-,x_{q,N}^+)$. 
It remains to check the continuity at $x_{q,N}^-$ and $x_{q,N}^+$. 
If $c_N < 1$, $w_{q,N}^{-} = w_N(x_{q,N}^-)$ and $w_{q,N}^{+} = w_N(x_{q,N}^+)$ do not coincide with one the poles of $g_N(w)$. 
If $c_N = 1$ and $q=1$, this property still holds true except for $w_{1,N}^{-} = w_N(x_{1,N}^-) = w_N(0)$ because $z_{0,N} = w_{1,N}^{-}$ (see lemma \ref{lemma:property_phi_zeros}). 
However, if $c_N = 1$, the solutions of $1 - \sigma_N^2 c_N f_N(w)$ are not poles of $g_N$ due to a pole zero cancellation.

Therefore, it is clear that $\mu_N([x_{q,N}^-,x_{q,N}^+])$ can also be written as
\begin{align}
	\mu_N([x_{q,N}^-,x_{q,N}^+]) = 
	\frac{1}{2\pi i} \oint_{\mathcal{C}_{q,N}^-} g_N(\lambda) \mathrm{d}\lambda.
	\notag
\end{align}
The integral can be evaluated using residue theorem and we give here the main steps of calculation.
Define $\mathcal{I}_q = \{k \in \{1,2, \ldots, K\}: \lambda_{k,N} \in (w_{q,N}^-,w_{q,N}^+)\}$ and $L_q=\mathrm{card}(\mathcal{I}_q) > 0$ 
($L_q > 0$ from lemma \ref{lemma:property_phi_extrema} item \ref{item:location_extrema}).
Assume $c_N < 1$. Since $\mathcal{C}_{q,N}$ only encloses $(w_{q,N}^-,w_{q,N}^+)$, we will have residues at the following points:
\begin{itemize}
	\item for $q=1$: residues at $z_{0,N}$, $0$ and $z_{k,N},\lambda_{k,N}$ for $k \in \mathcal{I}_1$.
	\item for $q \geq 2$: residues at $z_{k,N},\lambda_{k,N}$ for $k \in \mathcal{I}_q$.
\end{itemize}
If $c_N=1$, the zeros of $1 - \sigma^{2} c_N f_N(w)$ are not poles of $g_N(w)$:
\begin{itemize}
	\item for $q=1$: residues at $0$ and $\lambda_{k,N}$ for $k \in \mathcal{I}_1$.
	\item for $q \geq 2$: residues at $\lambda_{k,N}$ for $k \in \mathcal{I}_q$.
\end{itemize}
We just consider the case $c_N < 1$ in the following (the calculations are  similar for $c_N=1$ and are therefore omitted).
We consider the decomposition $g_N(\lambda) = g_{1,N}(\lambda) + g_{2,N}(\lambda) + g_{3,N}(\lambda)$, with
\begin{align}
	g_{1,N}(\lambda) &= f_N(\lambda) \left(1 - \sigma^2 c_N f_N(\lambda) \right), 
	\notag\\
	g_{2,N}(\lambda) &= -2 \sigma^2 c_N \lambda f_N(\lambda) f'_N(\lambda),  
	\notag\\
	g_{3,N}(\lambda) &= -\sigma^4 c_N (1 - c_N) \frac{ f_N(\lambda) f'_N(\lambda)}{1 - \sigma^2 c_N f_N(\lambda)}.
	\notag
\end{align}
These three functions admit poles at $0$,$\left(\lambda_{k,N}\right)_{k=1,\ldots,K}$, and $g_{3,N}$ has moreover poles at $(z_{k,N})_{k=0, \ldots, K}$.
After tedious but straightforward calculations, we finally find that for $k \in \{ 1,2, \ldots, K \}$, 
\begin{align}
	\mathrm{Res}\left(g_{1,N},\lambda_{k,N}\right) &= - \frac{1}{M} + \frac{2 \sigma^2 c_N}{M^2} \sum_{l \neq k} \frac{1}{\lambda_{l,N} - \lambda_{k,N}},
	\notag\\
	\mathrm{Res}\left(g_{2,N},\lambda_{k,N}\right) &= - \frac{2 \sigma^2 c_N}{M^2} \sum_{l \neq k} \frac{1}{\lambda_{l,N} - \lambda_{k,N}},
	\notag\\
	\mathrm{Res}\left(g_{3,N},\lambda_{k,N}\right) &= - \frac{1 - c_N}{c_N}.
	\notag
\end{align}
For the residues at $0$, we get
\begin{align}
	\mathrm{Res}\left(g_{1,N},0\right) &= -\frac{M-K}{M} + 2 \sigma^2 c_N \frac{M-K}{M} \frac{1}{M}\sum_{l=1}^K \frac{1}{\lambda_{l,N}},
	\notag\\
	\mathrm{Res}\left(g_{2,N},0\right) &= -2 \sigma^2 c_N \frac{M-K}{M} \frac{1}{M} \sum_{l=1}^K \frac{1}{\lambda_{l,N}},
	\notag \\
	\mathrm{Res}\left(g_{3,N},0\right) &= -\frac{1 - c_N}{c_N}.
	\notag
\end{align}
Finally, the residues at $z_{k,N}$ for $k=0, \ldots, K$ are given by $\mathrm{Res}(g_{3,N},z_{k,N}) = \frac{1 - c_N}{c_N}$.
Using these evaluations, we obtain immediately that if $q \geq 2$, then, 
\begin{align}
	\mu_N([x_{q,N}^-,x_{q,N}^+]) &= 
	- \sum_{k \in \mathcal{I}_q} 
	\left[
		\mathrm{Res}\left(g_{1,N},\lambda_{k,N}\right)
		+
		\mathrm{Res}\left(g_{2,N},\lambda_{k,N}\right)
		+
		\mathrm{Res}\left(g_{3,N},\lambda_{k,N}\right)
		+
		\mathrm{Res}\left(g_{3,N},z_{k,N}\right)
	\right]
	= \frac{L_q}{M}.
	\notag
\end{align}
This coincides with the ratio of eigenvalues of ${\bf B}_N {\bf B}_N^{*}$ associated to the cluster $[x_{q,N}^-,x_{q,N}^+]$ (i.e. the eigenvalues 
$\lambda_{k,N}$ for $k\in\mathcal{I}_q$). 
If $q = 1$, 
\begin{align}
	\mu_N([x_{1,N}^-,x_{1,N}^+]) &= 
	- \sum_{k \in \mathcal{I}_1} 
	\left[
		\mathrm{Res}\left(g_{1,N},\lambda_{k,N}\right)
		+
		\mathrm{Res}\left(g_{2,N},\lambda_{k,N}\right)
		+
		\mathrm{Res}\left(g_{3,N},\lambda_{k,N}\right)
		+
		\mathrm{Res}\left(g_{3,N},z_{k,N}\right)
	\right]
	\notag\\
	& \qquad -\left[ \mathrm{Res}\left(g_{1,N},0\right) + \mathrm{Res}\left(g_{2,N},0\right)  + \mathrm{Res}\left(g_{3,N},0\right) + 
	\mathrm{Res}\left(g_{3,N},z_{0,N}, \right) \right] 
	\notag\\
	& = \frac{L_1}{M} + \frac{M-K}{M},
	\notag
\end{align}
which also coincides with the ratio of eigenvalues of ${\bf B}_N {\bf B}_N^{*}$ associated to the cluster $[x_{1,N}^-,x_{1,N}^+]$
(the $\lambda_{k,N}$ for $k\in\mathcal{I}_1$ and $0$ with multiplicity $M-K$). 

Therefore, using \eqref{eq:tr_psi_tmp}, we get that
\begin{align}
	\label{eq:entier}
	\Tr \psi_a \left(\boldsymbol{\Sigma}_N \boldsymbol{\Sigma}_N^*\right) - \left( \sum_{q=1}^{I_N} L_q  +(M-K) \right) = \mathcal{O}\left(\frac{1}{N^{1/3}}\right).
\end{align}
But almost surely, for $N$ large enough, $\Tr \psi_a (\boldsymbol{\Sigma}_N \boldsymbol{\Sigma}_N^*)$ is exactly the number of eigenvalues contained in $[0,a]$ 
because no eigenvalue of $\boldsymbol{\Sigma}_N \boldsymbol{\Sigma}_N^*$ belong to $[a-\eta,a]$ (use theorem \ref{theorem:no_eig} with $a-\eta$ in place of $a$). 
The left handside of \eqref{eq:entier} is thus an integer. 
Since this integer decreases at rate $N^{-1/3}$, it is equal to zero for $N$ large enough.
\eqref{eq:inferieur} follows from the observation that $\sum_{q=1}^{I_N} L_q + M-K$ is equal to the number of eigenvalues of ${\bf B}_N {\bf B}_N^{*}$ that are less than $w_N(a)$. 

To evaluate the number of eigenvalues in interval $(b, +\infty)$, we use that no eigenvalue belongs to $[a,b]$ (theorem \ref{theorem:no_eig}). 
Therefore, 
\begin{align}
	\mathrm{card} \{ k: \hat{\lambda}_{k,N} > b \} = M - \mathrm{card} \{ k: \hat{\lambda}_{k,N} < a \}.
	\notag
\end{align}
\eqref{eq:inferieur} implies that 
\begin{align}
	\mathrm{card} \{ k: \hat{\lambda}_{k,N} > b \} = M-\sum_{q=1}^{I_N} L_q - (M-K),
	\notag
\end{align}
which coincides with the number of eigenvalues of $\B_N \B_N^*$ in interval $(w_N(b),+\infty)$. This concludes the proof of theorem \ref{theorem:loc_eig}.

	\section{Applications to the spiked models}
	\label{section:spike}
	
In this section, we use the above results in order to evaluate the behaviour of the largest eigenvalues of 
the information plus noise spiked models. In the remainder of this section, we assume that
\begin{assumption}
	\label{assumption:spiked_model}
	$K$ does not depend on $N$ and for all $k=1,\ldots,K$, the positive sequence $(\lambda_{k,N})$ writes
	\begin{align}
		\lambda_{k,N} = \lambda_k + \varepsilon_{k,N},
		\notag
	\end{align}
	with $\lim_{N \rightarrow +\infty} \varepsilon_{k,N} = 0$ and $\lambda_i \neq \lambda_j$ for $i \neq j$.
\end{assumption}
We define $K_s = \max\{k : \lambda_k > \sigma^2 \sqrt{c}\}$ and the function $\psi(\lambda) = \frac{(\sigma^2 + \lambda)(\sigma^2 c + \lambda)}{\lambda}$.
In the following, we characterize the support $\Scal_N$ of measure $\mu_N$ and use the above results on the almost sure location of the sample eigenvalues in order to prove the theorem
\begin{theorem}
	\label{theorem:lim_spiked_eig}
	We have with probability one,
	\begin{align}
		\hat{\lambda}_{k,N} \xrightarrow[N \to \infty]{}
		\begin{cases}
			\psi(\lambda_k) & \quad \text{if} \quad k \leq K_s \\
			\sigma^2 (1+\sqrt{c})^2  & \quad k \in \{K_s + 1,\ldots,K\}	
		\end{cases}
		\notag
	\end{align}
\end{theorem}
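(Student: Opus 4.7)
The plan is to combine the exact separation of Theorem \ref{theorem:loc_eig} with a careful asymptotic description of the support $\Scal_N$ in the spiked regime. I first treat the generic case $\lambda_k \neq \sigma^{2}\sqrt{c}$ for every $k$, and then handle the boundary values by a Weyl-type perturbation argument.

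Under Assumption A-5 one may write
\begin{align*}
	f_N(w) = -\frac{1}{w} + \frac{1}{M}\sum_{k=1}^{K}\frac{\lambda_{k,N}}{w(\lambda_{k,N}-w)},
\end{align*}
so $f_N = f_\infty + O(1/M)$ on every compact set avoiding $\{0,\lambda_{1,N},\ldots,\lambda_{K,N}\}$, with $f_\infty(w) = -1/w$, and correspondingly $\phi_N = \phi_\infty + O(1/M)$ with $\phi_\infty(w) = w + \sigma^{2}(1+c_N) + \sigma^{4}c_N/w$. The only positive extrema of $\phi_\infty$ are $w = \pm\sigma^{2}\sqrt{c_N}$, mapped to $\sigma^{2}(1\pm\sqrt{c_N})^{2}$, and the implicit function theorem applied to $\phi_N'=0$ at these points yields persistent extrema $w_{b,N}^{\pm} = \pm\sigma^{2}\sqrt{c_N} + O(1/M)$ producing a bulk cluster with endpoints $\sigma^{2}(1\pm\sqrt{c_N})^{2}+O(1/M)$. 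Near each pole $\lambda_{k,N}$, clearing the pole reduces $\phi_N'(w)=0$ locally to a perturbed cubic whose leading quadratic part has discriminant proportional to $\lambda_{k,N}^{2} - \sigma^{4}c_N + O(1/M)$, matching $\phi_\infty'(\lambda_k) = 1 - \sigma^{4}c/\lambda_k^{2}$. When $\lambda_k > \sigma^{2}\sqrt{c}$ this discriminant is bounded away from zero and positive, so two real extrema $w_{k,N}^{\pm} = \lambda_{k,N} + O(M^{-1/2})$ bifurcate; since $\phi_\infty(\lambda_k) = \psi(\lambda_k)$, Taylor expansion places the corresponding cluster endpoints within $O(M^{-1/2})$ of $\psi(\lambda_{k,N})$. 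When $\lambda_k < \sigma^{2}\sqrt{c}$, the discriminant is negative, no additional positive extremum is produced, and $\lambda_{k,N}$ is absorbed into $(w_{b,N}^{-},w_{b,N}^{+})$. By Lemma \ref{lemma:property_phi_extrema}, this gives $Q_N = K_s + 1$ and
\begin{align*}
	\Scal_N = \bigl[\sigma^{2}(1-\sqrt{c_N})^{2}+O(1/M),\sigma^{2}(1+\sqrt{c_N})^{2}+O(1/M)\bigr] \,\cup\, \bigcup_{k=1}^{K_s}\bigl[\psi(\lambda_{k,N}) \pm O(M^{-1/2})\bigr].
\end{align*}

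With this structure, Theorem \ref{theorem:loc_eig} applied between consecutive clusters gives, almost surely for $N$ large: each spike cluster corresponds, via item \ref{item:location_extrema} of Lemma \ref{lemma:property_phi_extrema}, to a single eigenvalue $\lambda_{k,N}$ of $\B_N\B_N^{*}$, and therefore contains exactly one sample eigenvalue. Since $\psi$ is strictly increasing on $(\sigma^{2}\sqrt{c},+\infty)$ and $\lambda_1>\cdots>\lambda_{K_s}$, the ordering identifies that sample eigenvalue as $\hat{\lambda}_{k,N}$, whence $\hat{\lambda}_{k,N} = \psi(\lambda_{k,N}) + O(M^{-1/2}) \to \psi(\lambda_k)$ a.s.\ for $k \le K_s$. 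For each fixed $k \in \{K_s+1,\ldots,K\}$, the same theorem places $\hat{\lambda}_{k,N}$ in the bulk cluster, giving the upper bound $\hat{\lambda}_{k,N} \le \sigma^{2}(1+\sqrt{c_N})^{2} + O(1/M)$; the matching lower bound $\hat{\lambda}_{k,N} \ge \sigma^{2}\lambda_{k+2K}(\W_N\W_N^{*})$ follows from Weyl's inequality applied to the rank-$2K$ Hermitian perturbation $\Sigmabs_N\Sigmabs_N^{*} - \sigma^{2}\W_N\W_N^{*} = \B_N\B_N^{*} + \sigma\B_N\W_N^{*} + \sigma\W_N\B_N^{*}$, together with the Marchenko--Pastur edge $\lambda_{k+2K}(\W_N\W_N^{*}) \to (1+\sqrt{c})^{2}$ a.s.

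The boundary case $\lambda_k = \sigma^{2}\sqrt{c}$ is handled by continuity. Note first that $\psi(\sigma^{2}\sqrt{c}) = \sigma^{2}(1+\sqrt{c})^{2}$, so both branches of the theorem agree at the threshold. Replace $\B_N$ by $\tilde{\B}_N(\delta)$ whose nonzero squared singular values are shifted by $\pm\delta$ (kept distinct and away from $\sigma^{2}\sqrt{c}$); the previous analysis applies to $\tilde{\Sigmabs}_N(\delta) = \tilde{\B}_N(\delta) + \sigma\W_N$, while Weyl's singular value inequality yields $|\hat{\lambda}_{k,N}^{1/2} - (\lambda_k(\tilde{\Sigmabs}_N(\delta)\tilde{\Sigmabs}_N(\delta)^{*}))^{1/2}| \le \|\B_N - \tilde{\B}_N(\delta)\| = O(\delta)$; letting $N\to\infty$ first and then $\delta\to 0$, together with the continuity of $\psi$, concludes the argument. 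The main obstacle lies in the support step: the perturbative treatment of the cubic $\phi_N'=0$ near each spike location must produce exactly the claimed bifurcation pattern and identify the spike cluster endpoints to leading order, and this is precisely where the assumption $\lambda_k \neq \sigma^{2}\sqrt{c}$ is needed, in order to keep the discriminant bounded away from zero.
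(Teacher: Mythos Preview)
Your proposal follows the paper's strategy essentially step for step: locate the positive extrema of $\phi_N$ (a bulk pair near $\pm\sigma^{2}\sqrt{c_N}$ via first-order perturbation, and a pair near each supercritical $\lambda_k$ via a perturbed cubic), read off the support, and then invoke Theorem~\ref{theorem:loc_eig}. The boundary case via singular-value perturbation is the same argument as the paper's use of Fan's inequality.

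Two points worth flagging. First, the cubic near each $\lambda_{k,N}$ always has a \emph{third} real root at $\lambda_{k,N}+O(1/M)$ (this is the solution $z_\epsilon$ of Lemma~\ref{lemma:eq_pert_3}), regardless of the sign of your discriminant; your ``leading quadratic part'' heuristic suppresses exactly this root. To get the count $Q_N=K_s+1$ one must verify that $\phi_N$ is \emph{negative} there, which the paper does by a direct computation yielding $\phi_N\bigl(\lambda_{k,N}+O(1/M)\bigr)=-\tfrac{\sigma^{4}(1-c)^{2}}{2\lambda_k}(1-\tfrac{c}{2})+o(1)<0$. You are right to identify this step as the main obstacle; this is how the paper fills it.

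Second, for the lower bound on $\hat\lambda_{k,N}$ when $K_s<k\le K$, the paper does not use Weyl's inequality and the Wishart edge; instead it argues that if $\hat\lambda_{K_s+1,N}$ stayed below $\sigma^{2}(1+\sqrt{c})^{2}-\epsilon$ along a subsequence, then (since the $K_s$ spike eigenvalues lie strictly above $\sigma^{2}(1+\sqrt{c})^{2}$) the empirical measure $\hat\mu_N$ would assign zero mass to an interval carrying positive Marchenko--Pastur mass, contradicting Theorem~\ref{theorem:conv_mh_N}. Your Weyl-based argument is equally valid and arguably more direct, but it imports the almost-sure convergence $\lambda_{k+2K}(\W_N\W_N^{*})\to(1+\sqrt{c})^{2}$ as an external fact, whereas the paper's version stays entirely within the tools developed here.
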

We note that theorem  \ref{theorem:lim_spiked_eig} was already proved in the recent paper \cite{benaych2011singular} using a different approach.

	\subsection{Preliminary results on perturbed equations}

We first state two useful lemmas related to the solutions of perturbed equations. 
They can be interpreted as extensions of lemmas 3.2 and 3.3 of  \cite{baik2006eigenvalues}. 
In the following, we denote respectively by $\mathcal{D}_{o}(z,r)$, $\mathcal{D}_{c}(z,r)$ and $\mathcal{C}(z,r)$ the open disk, closed disk and circle of radius $r>0$ with center $z$. 
Moreover, in this paragraph, the notation $o(1)$ denotes a term that converges towards 0 when the variable $\epsilon$ converges towards $0$. 
The first result is a straightforward modification of \cite[lemma 3.2]{baik2006eigenvalues}. Its proof is thus omitted. 	
\begin{lemma}
        \label{lemma:eq_pert_1}
        For each $\epsilon > 0$, we consider $h_{\epsilon}(z)= h(z) + \chi_{\epsilon}(z)$ with $h, \chi_{\epsilon}$ two holomorphic functions in a disk 
        $\mathcal{D}_{o}(z_0,r_0)$. 
        We assume that $\sup_{z \in \mathcal{D}_{o}(z_0,r_0)} |\chi_{\epsilon}(z)| = o(1)$. We consider  $z_{0,\epsilon} = z_0 + \delta_{\epsilon}$ 
        with $\delta_{\epsilon}=o(1)$. 
        Then, $\exists \ \epsilon_0 > 0$ and $r > 0$ such that for each $0  < \epsilon \leq \epsilon_0$, 
        $z_{0,\epsilon} \in \mathcal{D}_{o}(z_0,r)$ and 
        the equation
        \begin{align}
	       z - z_{0,\epsilon} - \epsilon h_{\epsilon}(z) = 0,
	       \notag
        \end{align}
        admits a unique solution in $\mathcal{D}_{o}(z_{0},r)$ given by
        \begin{align}
	        z_{\epsilon} &= z_{0,\epsilon} + \epsilon h(z_{0}) + o(\epsilon).
	        \notag
        \end{align}
        Moreover, if we assume that $z_0 \in \mathbb{R}$, $h(z) \in \mathbb{R}$ for $z \in \mathbb{R}$, and that for $\epsilon$ small enough,
        $z_{0,\epsilon} \in \mathbb{R}$, $h_{\epsilon}(z) \in \mathbb{R}$ for $z \in \mathbb{R}$, then $z_{\epsilon} \in \mathbb{R}$.
\end{lemma}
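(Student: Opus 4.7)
The plan is to apply Rouch\'e's theorem to the function $F_\epsilon(z) := z - z_{0,\epsilon} - \epsilon h_\epsilon(z)$ on a small disk centered at $z_0$, and then to extract the asymptotic expansion directly from the fixed-point identity $z_\epsilon = z_{0,\epsilon} + \epsilon h_\epsilon(z_\epsilon)$.

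First I would fix any $r \in (0, r_0/2)$ and set $M_h := \sup_{\mathcal{D}_c(z_0, r)} |h(z)| < +\infty$, which is finite by holomorphy of $h$. The hypotheses $\sup_{\mathcal{D}_o(z_0, r_0)} |\chi_\epsilon(z)| = o(1)$ and $\delta_\epsilon = o(1)$ then allow me to choose $\epsilon_0 > 0$ small enough that for every $\epsilon \in (0, \epsilon_0]$, both $\sup_{\mathcal{D}_c(z_0, r)} |\chi_\epsilon| \leq 1$ and $|\delta_\epsilon| + \epsilon(M_h + 1) < r$. Writing $F_\epsilon(z) = (z - z_0) + \bigl(-\delta_\epsilon - \epsilon h_\epsilon(z)\bigr)$, on the circle $\mathcal{C}(z_0, r)$ the first summand has modulus exactly $r$ while the bracketed remainder has modulus strictly less than $r$. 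Rouch\'e's theorem then yields exactly one zero $z_\epsilon$ of $F_\epsilon$ in $\mathcal{D}_o(z_0, r)$, and the same inequality places $z_{0,\epsilon} = z_0 + \delta_\epsilon$ in this disk as well.

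For the asymptotic, I would exploit the identity $z_\epsilon - z_{0,\epsilon} = \epsilon h_\epsilon(z_\epsilon)$ directly: its right-hand side is $O(\epsilon)$, so $z_\epsilon \to z_0$; continuity of $h$ at $z_0$ then gives $h(z_\epsilon) = h(z_0) + o(1)$, while $|\chi_\epsilon(z_\epsilon)| \leq \sup_{\mathcal{D}_o(z_0, r_0)} |\chi_\epsilon| = o(1)$, so that $h_\epsilon(z_\epsilon) = h(z_0) + o(1)$ and the stated expansion $z_\epsilon = z_{0,\epsilon} + \epsilon h(z_0) + o(\epsilon)$ follows. For the real version I would invoke the Schwarz reflection principle: since $z_0, z_{0,\epsilon} \in \mathbb{R}$ and $h, h_\epsilon$ take real values on $\mathbb{R}$, one has $\overline{F_\epsilon(z)} = F_\epsilon(\bar z)$ on $\mathcal{D}_o(z_0, r_0)$, which is symmetric about the real axis; hence $\bar z_\epsilon$ is also a zero of $F_\epsilon$ lying in $\mathcal{D}_o(z_0, r)$, and the uniqueness statement just proved forces $z_\epsilon = \bar z_\epsilon \in \mathbb{R}$.

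There is no substantive obstacle here, only bookkeeping: the crucial point is that the estimate on $\chi_\epsilon$ is uniform in $z$ over the whole disk $\mathcal{D}_o(z_0, r_0)$, which is what allows the same $\epsilon_0$ to work both for the Rouch\'e comparison on $\mathcal{C}(z_0, r)$ and for the pointwise bound on $\chi_\epsilon(z_\epsilon)$ used in the expansion step.
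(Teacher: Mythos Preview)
Your proof is correct and follows the same approach the paper implicitly relies on: the paper omits the proof of this lemma, citing it as a straightforward modification of \cite[Lemma~3.2]{baik2006eigenvalues}, and the detailed proof it gives in Appendix~\ref{section:proof_lemma_eq_pert_3} for the analogous third-degree Lemma~\ref{lemma:eq_pert_3} uses exactly the Rouch\'e comparison and fixed-point expansion you carry out here. Your handling of the real case via the reflection symmetry $\overline{F_\epsilon(z)} = F_\epsilon(\bar z)$ combined with uniqueness is also the same mechanism the paper uses for $z_\epsilon$ in that appendix.
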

The second result is an extension of \cite[Lem.3.3]{baik2006eigenvalues} to certain third degree equations. The proof is given the Appendix \ref{section:proof_lemma_eq_pert_3}.
\begin{lemma}
	\label{lemma:eq_pert_3}
	For each $\epsilon > 0$ and $i=1,2$, we consider $h_{i,\epsilon}(z)= h_{i}(z) + \chi_{i,\epsilon}(z)$ with $h_{i}, \chi_{i,\epsilon}$ holomorphic functions 
	in a disk  $\mathcal{D}_{o}(z_0,r_0)$. 
	We assume that  $h_1(z_0) \neq 0$ and that $\sup_{z \in \mathcal{D}_{o}(z_0,r_0)} |\chi_{i,\epsilon}(z)| = o(1)$ for $i=1,2$.   
	We consider $z_{0,\epsilon} = z_0 + \delta_{\epsilon}$ with $\delta_{\epsilon}=o(1)$.
	Then, $\exists \ \epsilon_0 > 0$ and $r > 0$ such that $z_{0,\epsilon} \subset \mathcal{D}_{o}(z_0,r)$ $\forall \epsilon \in (0, \epsilon_0)$ and the equation
	\begin{align}
		\left(z - z_{0,\epsilon}\right)^3 - \epsilon \left(z - z_{0,\epsilon}\right) h_{1,\epsilon}(z) +\epsilon^2 h_{2,\epsilon}(z) = 0
		\notag
	\end{align}
	has $3$ solutions in  $\mathcal{D}_{o}(z_{0},r)$ given by
	\begin{align}
		z_{\epsilon}^- 
		&= 
		z_{0,\epsilon} - \sqrt{\epsilon} \sqrt{h_{1}(z_{0})} 
		+ o(\sqrt{\epsilon})
		\notag\\
		z_{\epsilon}^+ 
		&= 
		z_{0,\epsilon} 
		+ \sqrt{\epsilon} \sqrt{h_{1}(z_{0})} 
		+  o(\sqrt{\epsilon})
		\notag\\
		z_{\epsilon} &= 
		z_{0,\epsilon} + \epsilon \frac{h_{2}(z_{0})}{h_{1}(z_{0})} 
		+ o(\epsilon),
		\notag
	\end{align}
	where $\sqrt{.}$ is an arbitrary branch of the square root, analytic in a neighborhood of $h_{1}(z_{0})$.
	Moreover, if we assume that $z_{0} \in \mathbb{R}$, $h_i(z) \in \mathbb{R}$ for $z \in \mathbb{R}$ and that for $\epsilon$ small enough that
	$z_{0,\epsilon} \in \mathbb{R}$, $h_{i,\epsilon}(z) \in \mathbb{R}$ for $z \in \mathbb{R}$, then $z_{\epsilon}$ is real. 
	Moreover, if $h_1(z_{0}) > 0$  then $z_{\epsilon}^-$, $z_{\epsilon}^+$ and $z_{\epsilon}$ are real while $z_{\epsilon}^-$, $z_{\epsilon}^+$ are non real 
	if $h_1(z_{0}) < 0$. 
\end{lemma}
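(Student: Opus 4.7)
The plan is to apply Rouché's theorem at three different scales to locate the three roots of the cubic-type perturbed equation. Setting $u = z - z_{0,\epsilon}$, the equation becomes
\begin{equation*}
P_\epsilon(u) := u^3 - \epsilon\, u\, h_{1,\epsilon}(z_{0,\epsilon}+u) + \epsilon^2\, h_{2,\epsilon}(z_{0,\epsilon}+u) = 0,
\end{equation*}
and a dominant-balance heuristic reveals two natural scales: $u \sim \sqrt{\epsilon}$, where $u^3$ balances $\epsilon\, u\, h_1(z_0)$ while the $\epsilon^2 h_2$ term is of lower order $\epsilon^2$, and $u \sim \epsilon$, where $\epsilon\, u\, h_1(z_0)$ balances $\epsilon^2\, h_2(z_0)$ while the cubic term is of lower order $\epsilon^3$. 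Accordingly, I expect two roots at scale $\sqrt{\epsilon}$ and one root at scale $\epsilon$.

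First I would fix a small $r \in (0, r_0/2)$ and show by Rouché's theorem that $P_\epsilon$ has exactly three zeros in the disk $\mathcal{D}_{o}(z_0, r)$ for $\epsilon$ small enough. On the circle $|z - z_0| = r$ the reference function $(z-z_0)^3$ has modulus $r^3$, while the difference with $P_\epsilon$ (viewed as a function of $z$) is bounded by $\mathcal{O}(|\delta_\epsilon| r^2) + \mathcal{O}(\epsilon r) + \mathcal{O}(\epsilon^2)$ using the uniform bounds on $h_i$ and $\chi_{i,\epsilon}$ on $\mathcal{D}_{o}(z_0, r_0)$. For fixed $r$ and small $\epsilon$ this is strictly less than $r^3$, so Rouché preserves the triple-zero count.

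To capture the two $\sqrt{\epsilon}$-scale roots I would substitute $v = u/\sqrt{\epsilon}$ and divide by $\epsilon^{3/2}$, obtaining
\begin{equation*}
v^3 - v\, h_{1,\epsilon}(z_{0,\epsilon} + \sqrt{\epsilon}\, v) + \sqrt{\epsilon}\, h_{2,\epsilon}(z_{0,\epsilon} + \sqrt{\epsilon}\, v) = 0.
\end{equation*}
On any compact subset of $v$, the coefficients converge uniformly to those of the limit equation $v(v^2 - h_1(z_0)) = 0$, whose roots are $0$ and $\pm\sqrt{h_1(z_0)}$ for any fixed branch of the square root. Since $h_1(z_0) \neq 0$, these three roots are pairwise distinct; on small circles $|v \mp \sqrt{h_1(z_0)}| = \rho$ the limit polynomial is bounded below by a positive constant while the perturbation is $o(1)$ uniformly, so Rouché gives a unique simple root $v_\epsilon^\pm = \pm\sqrt{h_1(z_0)} + o(1)$ in each disk. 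Translating back yields $z_\epsilon^\pm = z_{0,\epsilon} \pm \sqrt{\epsilon\, h_1(z_0)} + o(\sqrt{\epsilon})$.

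For the third root I instead rescale as $u = \epsilon\, w$ and divide by $\epsilon^2$, which produces $-w\, h_{1,\epsilon}(z_{0,\epsilon}+\epsilon w) + h_{2,\epsilon}(z_{0,\epsilon}+\epsilon w) + \epsilon\, w^3 = 0$ with affine limit $-w\, h_1(z_0) + h_2(z_0) = 0$. Since $h_1(z_0) \neq 0$, the unique limit root is $w_0 = h_2(z_0)/h_1(z_0)$, and a final Rouché comparison on a small disk around $w_0$ furnishes a simple root $w_\epsilon = w_0 + o(1)$, hence $z_\epsilon = z_{0,\epsilon} + \epsilon\, h_2(z_0)/h_1(z_0) + o(\epsilon)$. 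As the two $\sqrt{\epsilon}$-scale disks and the $\epsilon$-scale disk are pairwise disjoint for $\epsilon$ small, these three roots are distinct and exhaust the triple count from the outer Rouché step. Reality then follows from conjugation symmetry: under the real-data assumption $P_\epsilon$ commutes with complex conjugation, so its zeros are invariant under it. When $h_1(z_0) > 0$ the three approximate locations are real and well separated, forcing each root to be real; when $h_1(z_0) < 0$ the approximate locations $z_{0,\epsilon} \pm i\sqrt{\epsilon|h_1(z_0)|}$ are complex conjugate and distinct from the third, so the two $\sqrt{\epsilon}$-scale roots form a conjugate pair while $z_\epsilon$ remains real. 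The main technical point is to choose the two rescalings so that the inner Rouché counts reconcile exactly with the outer total of three; once that is secured, the expansions follow immediately.
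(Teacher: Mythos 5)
Your proposal is correct, and it reaches the same expansions as the paper, but the route is genuinely different. The paper introduces the auxiliary quadratic $(z-z_{0,\epsilon})^2 - \epsilon h_{1,\epsilon}(z) = 0$, locates its two roots $\hat z_\epsilon^\pm$ by Rouch\'e against $(z-z_{0,\epsilon})^2$, then shows the cubic minus its $\epsilon^2$ term (which factors through this quadratic) has exactly one root in a small disk $\mathcal{D}_o(z_{0,\epsilon},\sqrt{\epsilon r'})$ with $r'<|h_1(z_0)|/4$, compares the full cubic against that factored cubic by Rouch\'e to isolate $z_\epsilon$, and finally extracts the asymptotics of all three roots by rearranging the defining equations into fixed-point form and bootstrapping. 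Your proof instead performs a dominant-balance blow-up: rescale $u = \sqrt{\epsilon}\,v$ and divide by $\epsilon^{3/2}$ to obtain the limit cubic $v(v^2 - h_1(z_0))$, whose two nonzero simple roots are captured by Rouch\'e on fixed circles in the $v$-plane, and rescale $u=\epsilon w$ and divide by $\epsilon^2$ to obtain the affine limit $-w\,h_1(z_0) + h_2(z_0)$ for the third root. The two methods are both Rouch\'e-based and isolate roots at the same scales $\sqrt{\epsilon}$ and $\epsilon$; your rescaling formulation is more systematic and avoids the auxiliary quadratic and the explicit bound-chasing of the paper, whereas the paper's fixed-point rearrangements make the error-term tracking more concrete. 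The reality discussion at the end is essentially identical in both: Schwarz reflection plus uniqueness of the root in each small disk forces $z_\epsilon$ real, and the sign of $h_1(z_0)$ determines whether $z_\epsilon^\pm$ are real (well separated along $\mathbb{R}$) or a nonreal conjugate pair (imaginary part of order $\sqrt{\epsilon}$, dominating the $o(\sqrt{\epsilon})$ correction).
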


\subsection{\texorpdfstring{Characterization of $\Scal_N$ and limits of the eigenvalues if $\lambda_k \neq \sigma^2 \sqrt{c}$}{Characterization of the support and limits of eigenvalues}}
	
	\label{sec:proof_spike1}
	
In this paragraph, we identify the clusters of the support $\Scal_N$, and evaluate the points $x_{q,N}^-, x_{q,N}^+$
for $q=1,\ldots,Q_N$. From theorem \ref{theorem:support}, these points coincide with the positive extrema of function $\phi_N$ (defined in \eqref{definition:phi_N}), and
eventually $x_{1,N}^- = 0$ if $c_N = 1$. Therefore, we first evaluate the real zeros of 
$\phi'_N (w) = (1 - \sigma^2 c_N f_N(w))^2 - 2 \sigma^2 c_N w f'_N(w)(1 - \sigma^2 c_N f_N(w)) - \sigma^4 c_N (1-c_N) f'_N(w)$.
Straightforward calculations give 
\begin{align}
	\phi'_N(w) = \frac{1}{w^2 \prod_{k=1}^K (\lambda_{k,N} - w)^3}\left[ \gamma_{1,N}(w) + \frac{1}{M} \gamma_{2,N}(w) + \frac{1}{M^2} \gamma_{3,N}(w) \right],
	\notag
\end{align}
with
\begin{align}
	\gamma_{1,N}(w) &=
	(w^2 - \sigma^4 c_N) \prod_{k=1}^K (\lambda_{k,N}-w)^3,
	\notag\\
	\gamma_{2,N}(w) &=
	- 2 \sigma^2 c_N 
	\prod_{k=1}^K 
	(\lambda_{k,N}-w)
	\sum_{j=1}^K 
	\left[ 
		\lambda_{j,N}
		\left(w^2 + \sigma^2 (1+c_N)w - \frac{\sigma^2 (1+c_N)\lambda_{j,N}}{2} \right)
		\prod_{\substack{l=1 \\ l \neq j}}^K (\lambda_{l,N}-w)^2
	\right],
	\notag\\
	\gamma_{3,N}(w) &=
	\sigma^4 c_N^2
	\left(
		\sum_{k=1}^K \lambda_{k,N} \prod_{\substack{l=1 \\ l \neq k}}^K (\lambda_{l,N} - w)\right)
	\left(	\sum_{k=1}^K \lambda_{k,N} (3 w - \lambda_{k,N}) \prod_{\substack{l=1 \\ l \neq k}}^K (\lambda_{l,N} - w)^2
	\right).
	\notag
\end{align}
Therefore, $\phi_N^{'}(w) = 0$ if and only if
\begin{align}
	\gamma_{1,N}(w) + \frac{1}{M} \gamma_{2,N}(w) + \frac{1}{M^2} \gamma_{3,N}(w) = 0.
	\label{equation:zeros_gamma}
\end{align}
We assume $c < 1$, which implies that $c_N < 1$ for $N$ large enough.  The calculations are essentially the same if $c=1$.
We first observe that the zeros of $\phi_N$ are included into a compact interval $\Ical$ independent of $N$ (see the proof of 
Corollary \ref{corollary:support_bounded}). Next, we claim that for each $\alpha > 0$, it exists $\beta > 0$ and $N_0 \in \mathbb{N}$ such that 
\begin{align}
	\left| \gamma_{1,N}(w) + \frac{1}{M} \gamma_{2,N}(w) + \frac{1}{M^2} \gamma_{3,N}(w) \right|  > \beta,
	\notag
\end{align}
if $N > N_0$ and $|w-\sigma^{2} \sqrt{c}| > \alpha, |w+\sigma^{2} \sqrt{c}| > \alpha, |w - \lambda_k| > \alpha, k=1, \ldots, K$ and $w \in \Ical$.   
This follows immediately from the inequality 
\begin{align}
	\left| \gamma_{1,N}(w) + \frac{1}{M} \gamma_{2,N}(w) + \frac{1}{M^2} \gamma_{3,N}(w) \right| \geq 
	|\gamma_{1,N}(w)| -  \frac{1}{M} \gamma_{2,max} - \frac{1}{M^{2}}  \gamma_{3,max},
	\notag
\end{align}
where $\gamma_{i,max} = \max_{w \in \Ical} | \gamma_{i,N}(w)|$ for $i=2,3$.  This shows that the solutions of eq. \eqref{equation:zeros_gamma}
are located around the points $\sigma^{2} \sqrt{c}, -\sigma^{2} \sqrt{c}, \lambda_k, k=1, \ldots, K$.

In a disk $\Dcal_o(\sigma^{2} \sqrt{c},r)$, \eqref{equation:zeros_gamma} is equivalent to 
\begin{align}
	w - \sigma^{2} \sqrt{c}_N + \frac{1}{M} \frac{w - \sigma^{2} \sqrt{c}_N}{\gamma_{1,N}(w)} \left(\gamma_{2,N}(w) + \frac{1}{M} \gamma_{3,N}(w)\right) = 0.
	\label{eq:+sigma2racinec}
\end{align}
We use lemma \ref{lemma:eq_pert_1} with $\epsilon = M^{-1}$, $z_0 = \sigma^{2} \sqrt{c}$, $z_{0,\epsilon} =  \sigma^{2} \sqrt{c}_N$, and 
the functions 
\begin{align}
        h_{\epsilon}(w) =  -  \frac{(w - \sigma^2 \sqrt{c_N})}{\gamma_{1,N}(w)} \left[\gamma_{2,N}(w) + \frac{1}{M} \gamma_{3,N}(w)\right]
        \notag
\end{align}
and $h(w) = \lim_{M \rightarrow +\infty} h_{\epsilon}(w)$. $h(w)$ is  obtained by replacing $c_N$ and the $(\lambda_{k,N})_{k=1, \ldots, K}$  by $c$ and the 
$(\lambda_{k})_{k=1, \ldots, K}$ in the expression of $h_{\epsilon}$.  
Lemma \ref{lemma:eq_pert_1} implies that it exists $r$ for which equation \eqref{eq:+sigma2racinec}, or equivalently  equation \eqref{equation:zeros_gamma}, has a unique solution 
in $\Dcal_o(\sigma^{2} \sqrt{c},r)$ for $M$ large enough. 
This solution is given by $\sigma^2 \sqrt{c_N} + \mathcal{O}(\frac{1}{M})$.
It is easy to check that 
\begin{align}
	\phi_N \left(\sigma^2 \sqrt{c_N} + \mathcal{O}\left(M^{-1}\right) \right)  = \sigma^{2}(1 + \sqrt{c}_N)^{2} + \mathcal{O}\left(\frac{1}{M}\right).
	\notag
\end{align}
This quantity is positive, thus showing that $\sigma^2 \sqrt{c_N} + \mathcal{O}(M^{-1})$ is the pre-image  of a positive extremum of $\phi_N$. 
Exchanging $\sigma^{2} \sqrt{c}$ with $-\sigma^{2} \sqrt{c}$, we obtain similarly that it exists a neighborhood of  $-\sigma^{2} \sqrt{c}$ in which 
equation \eqref{equation:zeros_gamma} has a unique solution given by $-\sigma^2 \sqrt{c_N} + \mathcal{O}(\frac{1}{M})$.
Moreover,
\begin{align}
	\phi_N \left( -\sigma^2 \sqrt{c_N} + \mathcal{O}\left(M^{-1}\right) \right) = \sigma^{2}(1 - \sqrt{c}_N)^{2} + \mathcal{O}\left(\frac{1}{M}\right),
	\notag
\end{align}
so that $-\sigma^2 \sqrt{c_N} + \mathcal{O}(\frac{1}{M})$ is also the pre-image of a positive extremum of $\phi_N$.

We now consider $i \in \{1, \ldots, K \}$, and study the equation  \eqref{equation:zeros_gamma} in a neighborhood $\Dcal_o(\lambda_i,r)$ of $\lambda_i$. 
In order to use lemma \ref{lemma:eq_pert_3}, we put $\epsilon = \frac{1}{M}, z_0 = \lambda_i, z_{0,\epsilon} = \lambda_{i,N}$. It is easily seen 
that in  $\Dcal_o(\lambda_i,r)$, eq. \eqref{equation:zeros_gamma} is equivalent to 
\begin{align}
	(w - \lambda_{i,N})^{3} - \frac{1}{M} (w - \lambda_{i,N}) h_{1,\epsilon}(w) +  \frac{1}{M^{2}} h_{2,\epsilon}(w) = 0,
	\notag
\end{align}
where 
\begin{align}
	h_{1,\epsilon}(w)
	&=
	\frac
	{
		2 \sigma^2 c_N \sum_{k=1}^N 
		\left[ 
			\lambda_{k,N}
			\left(w^2 + \sigma^2 (1+c_N)w - \frac{\sigma^2 (1+c_N)\lambda_{k,N}}{2} \right)
			\prod_{\substack{l=1 \\ l \neq k}}^K (\lambda_{l,N}-w)^2
		\right]
	}
	{
		(w^2 - \sigma^4 c_N) \prod_{\substack{k=1 \\ k \neq i}}^K (\lambda_{k,N} - w)^2
	},
	\notag\\
	h_{2,\epsilon}(w) &= - \frac{\gamma_{3,N}(w)}{(w^{2} - \sigma^{4} c_N) \prod_{\substack{k \neq i}}^K (\lambda_{k,N}-w)^3}.
	\notag
\end{align}
We denote by $h_1(w)$ and $h_2(w)$ the limits of $h_{1,\epsilon}(w)$ and $h_{2,\epsilon}(w)$ when $\epsilon \rightarrow 0$, i.e. 
the functions obtained by replacing $c_N$ and the $(\lambda_{k,N})_{k=1, \ldots, K}$  by $c$ and the $(\lambda_{k})_{k=1, \ldots, K}$ respectively  in the expressions of 
$h_{1,\epsilon}, h_{2,\epsilon}$. After some algebra, we obtain that
\begin{align}	
	h_{1}(\lambda_{i}) = \frac{2 \sigma^{2} c \lambda_i^{2}(\lambda_i + \frac{\sigma^{2}(1+c)}{2})}{\lambda_i^{2} - \sigma^{4} c},
	\notag
\end{align}
while $h_{2}(\lambda_i)$ is equal to 
\begin{align}	
	h_{2}(\lambda_i) =  -\frac{2 \sigma^4 c^2 \lambda_i^{3}}{\lambda_i^{2} - \sigma^{4} c}.
	\notag
\end{align}
Lemma \ref{lemma:eq_pert_3} implies that it exists $r$ such that 
\begin{align}
	\lambda_{i,N} - \frac{1}{M} \frac{\sigma^{2} c \lambda_i}{\lambda_i + \sigma^{2} \frac{1+c}{2}} + o\left(\frac{1}{M}\right)
	\notag
\end{align}
is solution of  \eqref{equation:zeros_gamma} contained in  $\Dcal_o(\lambda_i,r)$. It is however easy to check that
\begin{align}
	\phi_N \left(\lambda_{i,N} - \frac{1}{M} \frac{\sigma^{2} c \lambda_i}{\lambda_i + \sigma^{2} \frac{1+c}{2}} + o\left(\frac{1}{M}\right) \right) 
	=
	-\frac{\sigma^4 (1-c)^2}{2 \lambda_i} \left(1 - \frac{c}{2}\right) < 0.
	\notag
\end{align}
Therefore, the above extremum is negative, and its pre-image cannot be one the points $w_{q,N}^{-}, w_{q,N}^{+}$. 
Moreover, if $\lambda_i < \sigma^{2} \sqrt{c}$, then $h_1(\lambda_i) < 0$ and (\ref{equation:zeros_gamma}) has no extra real solution in  $\Dcal_o(\lambda_i,r)$. 
If  $\lambda_i > \sigma^{2} \sqrt{c}$, then $h_1(\lambda_i) > 0$, and the quantities
\begin{align}
	\lambda_{i,N} - \frac{1}{\sqrt{M}} \sqrt{h_1(\lambda_i)} + o\left(\frac{1}{\sqrt{M}}\right)
	\quad\text{and}\quad
	\lambda_{i,N} + \frac{1}{\sqrt{M}} \sqrt{h_1(\lambda_i)} + o\left(\frac{1}{\sqrt{M}}\right)
	\notag
\end{align}
are the 2 other real solutions of \eqref{equation:zeros_gamma} contained in $\Dcal_o(\lambda_i,r)$. 
After some algebra, we get that
\begin{align}
	\phi_N \left(\lambda_{i,N} - \frac{1}{\sqrt{M}} \sqrt{h_1(\lambda_i)} + o\left(\frac{1}{\sqrt{M}}\right) \right)
	&= 
	\frac{(\lambda_{i,N} + \sigma^2 c_N)(\lambda_{i,N} + \sigma^2)}{\lambda_{i,N}}   
	- \frac{1}{\sqrt{M}} \frac{2 \sqrt{h_1(\lambda_i)} (\lambda_i^2 - \sigma^4 c)}{\lambda_i^2}
	+ o\left(\frac{1}{\sqrt{M}}\right),
	\notag\\
	\phi_N\left(\lambda_{i,N} + \frac{1}{\sqrt{M}} \sqrt{h_1(\lambda_i)} + o\left(\frac{1}{\sqrt{M}}\right) \right)
	&= 
	\frac{(\lambda_{i,N} + \sigma^2 c_N)(\lambda_{i,N} + \sigma^2)}{\lambda_{i,N}} 
	+ \frac{1}{\sqrt{M}} \frac{2 \sqrt{h_1(\lambda_i)} (\lambda_i^2 - \sigma^4 c)}{\lambda_i^2}
	+ o\left(\frac{1}{\sqrt{M}}\right),
	\notag
\end{align}
are both positive.
It is easy to check that if $k \leq K_s$, then, $\sigma^2 \sqrt{c_N} < \lambda_{k,N}$ for $N$ large enough. The above discussion thus implies that 
$\Scal_N$ has $K_s+1$ clusters, and that for $k \leq K_s$, then
\begin{align}
	x_{1,N}^- &= \sigma^2 (1 - \sqrt{c_N})^2 + \mathcal{O}\left(\frac{1}{M}\right), 
	\notag\\
	x_{1,N}^+ &= \sigma^2 (1 + \sqrt{c_N})^2 + \mathcal{O}\left(\frac{1}{M}\right), 
	\notag\\
	x_{K_s+2-k,N}^- &= 
	\frac{(\lambda_{k,N} + \sigma^2 c_N)(\lambda_{k,N} + \sigma^2)}{\lambda_{k,N}} 
	- \frac{1}{\sqrt{M}} \frac{2 \sqrt{h_1(\lambda_k)} (\lambda_k^2 - \sigma^4 c)}{\lambda_k^2}
	+ o\left(\frac{1}{\sqrt{M}}\right),
	\notag\\
	x_{K_s+2-k,N}^+ &= 
	\frac{(\lambda_{k,N} + \sigma^2 c_N)(\lambda_{k,N} + \sigma^2)}{\lambda_{k,N}} 
	+ \frac{1}{\sqrt{M}} \frac{2 \sqrt{h_1(\lambda_k)} (\lambda_k^2 - \sigma^4 c)}{\lambda_k^2}
	+ o\left(\frac{1}{\sqrt{M}}\right).
	\notag
\end{align}
In order to complete the proof, we use theorem \ref{theorem:loc_eig}.
Let $k \in \{1,\ldots,K_s\}$. From the previous analysis, the eigenvalue $\lambda_{k,N}$ is the unique eigenvalue of ${\bf B}_N {\bf B}_N^{*}$ associated with interval 
$(w_{q,N}^-,w_{q,N}^+)$ with $q= K_s - k + 2$, for $N$ large enough. 
Moreover, the number of clusters of $\Scal_N$ is equal to $K_s + 1$ for $N$ large enough and the sequences $x_{q,N}^{-}$ and $x_{q,N}^{+}$ converge towards limits equal to 
$\sigma^{2}(1 - \sqrt{c})^{2}$ and  $\sigma^{2}(1 + \sqrt{c})^{2}$ for $q=1$, and both coincide with $\psi(\lambda_{K_s+2-q})$ for $q \geq 2$. 
This implies that for each $\epsilon > 0$, almost surely for $N$ large enough, then $\hat{\lambda}_{k,N} \in (\psi(\lambda_k)-\epsilon,\psi(\lambda_k)+\epsilon)$ for 
$k=1, \ldots, K_s$ and that $\hat{\lambda}_{k,N} \in (\sigma^{2}(1 - \sqrt{c})^{2}-\epsilon, \sigma^{2}(1 + \sqrt{c})^{2} + \epsilon)$ for $k > K_s$. 
This shows that $\hat{\lambda}_{k,N} \rightarrow \psi(\lambda_k)$ for $k=1, \ldots, K_s$. 

We now prove the convergence of $\hat{\lambda}_{k,N}$ to $\sigma^2 (1 + \sqrt{c})^2$ for $K_s < k \leq K$.  
Let $k_{\mathrm{max}} = K_s + 1$ (i.e the index of the largest eigenvalue associated with the first cluster $[x_{1,N}^-, x_{1,N}^+]$).
We have already shown  $\limsup_N \hat{\lambda}_{k_{\mathrm{max}},N} \leq \sigma^2 (1+\sqrt{c})^2$ almost surely. 
It remains to prove $\liminf_N \hat{\lambda}_{k_{\mathrm{max}},N} \geq \sigma^2 (1+\sqrt{c})^2$.
Assume the converse is true. Then it exists $\epsilon > 0$ such that $\liminf_N \hat{\lambda}_{k_{\mathrm{max}},N} < \sigma^2 (1+\sqrt{c})^2 - \epsilon$.
We can thus extract a subsequence $\hat{\lambda}_{k_{\mathrm{max}},\phi(N)}$ converging towards a limit less than $\sigma^2 (1+\sqrt{c})^2 - \epsilon$.
Let $\hat{\mu}_{\phi(N)}$ be the empirical spectral measure associated with matrix $\boldsymbol{\Sigma}_{\phi(N)}\boldsymbol{\Sigma}^*_{\phi(N)}$.
We deduce that 
\begin{align}
	\hat{\mu}_{\phi(N)}\left((\sigma^2 (1+\sqrt{c})^2 - \epsilon, \sigma^2 (1+\sqrt{c})^2]\right) = 0  \quad \text{a.s for all large N}.
	\label{eq:contradition}
\end{align}
Theorem \ref{theorem:conv_mh_N} implies that $\hat{\mu}_{\phi(N)}$ converges torwards the Marcenko-Pastur distribution, which contradicts \eqref{eq:contradition}.  
This proves that $\hat{\lambda}_{k_{\mathrm{max}},N} \to \sigma^2 (1+\sqrt{c})^2$ with probability one.
We can prove similarly that $\hat{\lambda}_{k,N} \to \sigma^2 (1+\sqrt{c})^2 \ a.s$ for $K_s+1 < k \leq K$.

	\subsection
	{
	        \texorpdfstring
	        {Characterization of $\Scal_N$ and limits of the eigenvalues if $\sigma^2 \sqrt{c} \in \{\lambda_1, \ldots, \lambda_K \}$}
	        {Characterization of the support and limits of eigenvalues 2}
	}

In this section, we handle the case where one the $(\lambda_k)_{k=1,\ldots,K}$, say $\lambda_j$ with $j \leq K$, is equal to $\sigma^2 \sqrt{c}$.
For this, we will use the Fan inequality (see \cite[Th.2]{fan1951maximum}). 
For a rectangular matrix $\A$, we will denote by $\kappa_k(\A)$ its k-th largest singular value.
With this notations, we have $\kappa_j(\B) = \sqrt{\lambda_j} = \sqrt{\sigma^2 \sqrt{c}}$.
We also denote by $\u_{j,N}$ and $\v_{j,N}$ the left and right singular vector of $\B_N$ associated with $\kappa_j(\B_N)$.
Fan inequality gives, for $\epsilon > 0$,
\begin{align}
	\kappa_j(\B_N + \sigma \W_N) & \leq \kappa_j(\B_N + \sigma \W_N + \epsilon\, \u_{j,N} \v_{j,N}^*) + \kappa_1(\epsilon\, \u_{j,N} \v_{j,N}^*),
	\notag
	\\
	\kappa_j(\B_N + \sigma \W_N + \epsilon\, \u_{j,N} \v_{j,N}^*) & \leq \kappa_j(\B_N + \sigma\W_N) + \kappa_1(\epsilon\, \u_{j,N} \v_{j,N}^*).
	\notag
\end{align}
From the results of the previous section, it is clear that, almost surely,
\begin{align}
	\kappa_j(\B_N + \sigma \W_N + \epsilon\, \u_{j,N} \v_{j,N}^*) = \sqrt{\psi\left(\left(\sqrt{\lambda_j} + \epsilon\right)^2\right)} + o(1).
	\notag
\end{align}
Therefore, we end up with
\begin{align}
	\sqrt{\psi\left(\left(\sqrt{\lambda_j} + \epsilon\right)^2\right)} - \epsilon
	\leq
	\liminf_N \, \kappa_j(\B_N + \sigma \W_N)
	\leq
	\limsup_N \, \kappa_j(\B_N + \sigma \W_N) 
	\leq 
	\sqrt{\psi\left(\left(\sqrt{\lambda_j} + \epsilon\right)^2\right)} + \epsilon.
	\notag
\end{align}
Since $\psi(\lambda) \to \sigma^2 (1+\sqrt{c})^2$ when $\lambda \to \sigma^2 \sqrt{c}$, this completes the results of theorem \ref{theorem:lim_spiked_eig}.

%
%
%
%
%
%
%
%
%

\appendix

\section{\texorpdfstring{Proof of item \ref{item:re(b)} of theorem \ref{theorem:properties_m_N}}{Proof 1}}
\label{sec:proof-re(b)}

The proof is a  direct consequence of Section 2 of  \cite{dozier2007analysis}
(formulas (2.1) to (2.5) of \cite{dozier2007analysis}). We first recall that if we denote by 
$g(z)$ and $G(z)$ the terms defined for $z \in \mathbb{C}^{+}$ by
\begin{align}
	g(z) &= \frac{\sigma^2 c_N}{|1 + \sigma^2 c_N m_N(z)|^2} \frac{1}{M} \Tr \B_N \B_N^* \T_N(z) \T_N(z)^*,
	\notag\\
	G(z) &= \sigma^2 c_N \frac{1}{M} \Tr \T_N(z) \T_N(z)^*,
	\notag
\end{align}
where $\T_N(z) = \left[-z(1+\sigma^{2}c_{N}m_{N}(z))\mathbf{I}_{M}+\sigma^{2}(1-c_{N})\mathbf{I}_{M}+\frac{\mathbf{B}_{N}\mathbf{B}_{N}^{*}}{1+\sigma^{2}c_{N}m_{N}(z)} \right]^{-1}$, 
then, it is shown in \cite{dozier2007analysis} that 
\begin{align}
	\label{eq:Gg}
	0 < |z| G(z) < 1 - g(z)
\end{align}
for each $z \in \mathbb{C}^{+}$. 
If  $z_1 = \mathrm{Re}(z)$ and $z_2 = \mathrm{Im}(z)$, \eqref{eq:Gg} implies that $0 < 1 - g(z) -|z_1| G(z) \leq  1 - g(z) +z_1 G(z)$. 
It is shown in \cite{dozier2007analysis} that
\begin{align}
	\mathrm{Re}(1 + \sigma^{2} c_N m_N(z)) = \frac{1 + \sigma^{2}(1 - c_N) G(z) + \mathrm{Im}(1 + \sigma^{2} c_N m_N(z)) z_2 G(z)}{ 1 - g(z) +z_1 G(z)},
	\notag
\end{align}
for $z \in \mathbb{C}^{+}$. 
As $\mathrm{Im}(1 + \sigma^{2} c_N m_N(z)) = \sigma^{2} c_N \mathrm{Im}(m_N(z)) > 0$ on $\mathbb{C}^{+}$ (see item \ref{property:Im_Stieltjes_R+} of Property \ref{property:Stieltjes}), 
we get that 
\begin{align}
	\mathrm{Re}(1 + \sigma^{2} c_N m_N(z)) > \frac{1}{ 1 - g(z) +z_1 G(z)} > \frac{1}{1 + z_1 G(z)}.
	\notag
\end{align}
The inequality $|z_1| G(z) < 1$ implies that $\mathrm{Re}(1 + \sigma^{2} c_N m_N(z)) > \frac{1}{2}$ for each $z \in \mathbb{C}^{+}$. 
This also implies that  $\mathrm{Re}(1 + \sigma^{2} c_N m_N(x)) \geq \frac{1}{2}$ for $x \in \mathbb{R}$ if $c_N < 1$ and for $x \in \mathbb{R}^{*}$ if $c_N = 1$.

\section{\texorpdfstring{Proof of items \ref{item:left-continuity} and \ref{item:domination_der_w} of lemma \ref{lemma:property_w} when $q=1$}{Proof 2}}
\label{sec:proof-w'}

In order to prove these 2 statements, we study the behaviour of $w_N(x)$ and of $w_N^{'}(x)$ when $x \rightarrow 0, x < 0$ and $x \rightarrow 0, x > 0$. 

We first look at the limit for $x < 0$.
Lemmas \ref{lemma:property_phi_extrema} and \ref{lemma:property_w} imply that $w_N$ is the inverse of $\phi_N$ on interval $(-\infty,0)$. $w_N(x)$ is a continuous 
increasing function on $(-\infty, 0)$ upperbounded by $w_{1,N}^{-}$; therefore, $\lim_{x \rightarrow 0, x < 0} w_N(x)$ exists, and is less than  $w_{1,N}^{-}$. 
Taking the limit  when $x \rightarrow 0, x < 0$ from both sides of the equation $\phi_N(w_N(x)) = x$ for $x \in (-\infty, 0)$, and using the continuity of $\phi_N$ on $(-\infty, 0]$, 
we obtain immediately that $\phi_N(\lim_{x \rightarrow 0, x < 0}  w_N(x)) = 0$. This implies that $\lim_{x \rightarrow 0, x < 0}  w_N(x) = w_{1,N}^-$. 
This shows that $w_N$ is left continuous at $x=0$.
Since $w_N(x) = x (1 + \sigma^2 m_N(x))^2$, it follows that $1 + \sigma^2 m_N(x) = \mathcal{O}(|x|^{-1/2})$. 
As $w_N$ is continuously differentiable on $(-\infty, 0)$, we can differentiate the relation  $\phi_N(w_N(x)) = x$, and obtain that $\phi_N'(w_N(x)) w_N'(x) = 1$ for $x < 0$, 
or equivalently that $w_N'(x) = \frac{1}{\phi_N'(w_N(x))}$. In other words, it holds that 
\begin{align}
	w_N'(x) = \frac{1}{[1-\sigma^2 f_N(w_N(x))] [1-\sigma^2 f_N(w_N(x)) - 2 \sigma^2 w_N(x) f_N'(w_N(x))]}.
	\label{eq:expre-w'}
\end{align}
We observe that $1 - \sigma^{2} f_N(w_{1,N}^-) = 0$ so that 
\begin{align}
        \lim_{x\uparrow 0} 1-\sigma^2 f_N(w_N(x)) - 2 \sigma^2 w_N(x) f_N'(w_N(x))) = -2 \sigma^2 w_{1,N}^- f_N'(w_{1,N}^-) \neq 0
\end{align}
Moreover, \eqref{eq:expre-b-1} implies that 
\begin{align}
	\frac{1}{1 - \sigma^2 f_N(w_N(x))} = 1 + \sigma^2 m_N(x) \quad \text{for } x < 0,
	\notag
\end{align}
which proves that $(1 - \sigma^2 f_N(w_N(x)))^{-1} = \mathcal{O}((-x)^{-1/2})$. \eqref{eq:expre-w'} implies immediately that $w_N'(x) = \mathcal{O}\left(\frac{1}{\sqrt{-x}}\right)$.

We now study the behaviour of $w_N$ and $w_N^{'}$ when $x \rightarrow 0$, $x > 0$. We first study $\sqrt{x} m_N(x)$ for $x \rightarrow 0, x > 0$. For this, 
we introduce the function $\psi(\xi, y)$ defined by 
\begin{align}
	\psi(\xi,y) = 1 - \frac{1}{M} \Tr \left(\B_N \B_N^* \frac{\xi}{y + \sigma^2 \xi}  - \xi (y + \sigma^2 \xi) \right)^{-1}.
	\notag
\end{align}
The introduction of $\psi$ is based on the observation that eq. \eqref{definition:m_N} is equivalent to $\psi(\sqrt{x} m_N(x), \sqrt{x}) = 0$ for $x > 0$. 
We denote by $\xi_0$ the term $\xi_0 = i \sigma^{-2} \sqrt{|w_{1,N}^-|}$ and notice that $\psi(\xi_0, 0) = 0$.
It is easily checked that $\psi$ is holomorphic in a neighborhood of $(\xi_0, 0)$ and that $\frac{\partial \psi}{\partial \xi} (\xi_{0},0) \neq 0$. 
Therefore, from the implicit function theorem (the analytic version - see e.g Cartan \cite[Prop.6]{cartan1961theorie}), it exists a unique function $\xi(y)$, holomorphic in a neighborhood 
$\Vcal$ of $0$ satisfying $\psi(\xi(y), y) = 0$ for $y \in \Vcal$ and $\xi(0) = \xi_0$. 
As $\mathrm{Im}(\xi_0) >0$, it is clear that it exists a neighborhood $\Vcal^{'}$ of $0$ included in $\Vcal$ such that $\mathrm{Im}(\xi(y)) > 0$
for each $y \in \Vcal^{'}$. We claim that for $\sqrt{x} \in  \Vcal^{'} \cap \mathbb{R}^{+*}$, $\xi(\sqrt{x}) = \sqrt{x} m_N(x)$. For this, we 
notice that if $x \in (0, x_{1,N}^+)$, $m_N(x)$ is the unique solution of Eq. (\ref{definition:m_N}) for which $\Im(m_N(x)) > 0$. Indeed,  
from item \ref{item:w_eq_phi} of lemma \ref{lemma:property_w}, for $x \in  (0, x_{1,N}^+)$, $w_N(x)$ is the unique solution with positive imaginary part 
of  equation $\phi_N(w) = x$. 
But, $m_N(x)$ is solution of \eqref{definition:m_N} iff $w_N(x)$ is solution of $\phi_N(w) = x$. 
Moreover $m_N(x) \in \mathbb{C}^+$ iff $w_N(x)\in \mathbb{C}^+$, a property which is readily seen from the relation \eqref{eq:canonique-w}. 
The conclusion follows from the observation that $m_N(x)$ satisfies \eqref{definition:m_N} iff $\sqrt{x} m_N(x)$ satisfies $\psi(\sqrt{x} m_N(x), \sqrt{x}) = 0$. 
This in turn shows that for each $\sqrt{x} \in  \Vcal^{'} \cap \mathbb{R}^{+*}$, $\xi(\sqrt{x}) = \sqrt{x} m_N(x)$, 
or equivalently that $ \xi(y) = y m_N(y^{2})$ for $y \in  \Vcal^{'} \cap \mathbb{R}^{+*}$. 
As $\xi(y)$ is holomorphic in $\Vcal^{'}$, $\xi(y) = \xi_0 + o(1)$ and $\xi^{'}(y) = \xi_1 + o(1)$ for some coefficient $\xi_1$. 
Therefore, $y m_N(y^{2}) =  \xi_0 + o(1)$ and $2 y^{2} m_N^{'}(y^{2}) + m_N(y^{2}) = \xi_1 + o(1)$ for $y \in  \Vcal^{'} \cap \mathbb{R}^{+*}$, 
or equivalently $\sqrt{x} m_N(x) = \xi_0 + o(1)$ and $2 x m_N^{'}(x) + m_N(x) =  \xi_1 + o(1)$ for $x > 0$ small enough. As $w_N(x) = x (1 + \sigma^{2} m_N(x))^{2}$, 
we get that 
\begin{align}
	w_N^{'}(x) = \left(1 + \sigma^{2} m_N(x) \right) \left(1 + \sigma^{2} (m_N(x) +  2 x m_N^{'}(x)) \right).
	\notag
\end{align}
As $(m_N(x) +  2 x m_N^{'}(x))$ is a $\Ocal(1)$ term, and as $m_N(x) = \frac{\xi_0}{\sqrt{x}} + o(\frac{1}{\sqrt{x}})$, 
we obtain that $|w_N^{'}(x)| \leq \frac{C}{\sqrt{x}}$ for $x > 0$ small enough for some constant $C > 0$.

\section{\texorpdfstring{Proof of lemma \ref{lemma:eq_pert_3}}{Proof 3}}
\label{section:proof_lemma_eq_pert_3}

We begin by choosing $r > 0$ and $\epsilon_1 > 0$ such that $r < r_0$,  $z_{0,\epsilon} \in \mathcal{D}_{c}(z_{0},r)$ and 
$\mathcal{D}_{c}(z_{0, \epsilon},r) \subset \mathcal{D}_{0}(z_{0},r_0)$, for each 
$0 < \epsilon < \epsilon_1$.  Let $f_{\epsilon}(z) = (z - z_{0,\epsilon})^3 - \epsilon (z-z_{0,\epsilon}) h_{1,\epsilon}(z) + \epsilon^2 h_{2,\epsilon}(z)$ and
$g_{\epsilon}(z) = (z-z_{0,\epsilon})^3$. Moreover, define $\frac{K_i}{2} = \sup_{\mathcal{D}_{c}(z_{0},r)} |h_i(z)|$ (for $i=1,2$).

As $\sup_{z \in \Dcal_{o}(z_0, r_0)} |\chi_{i,\epsilon}(z)| = o(1)$, it exists $\epsilon_2 \leq \epsilon_1$ such that $\sup_{\mathcal{D}_{c}(z_{0},r)} |h_{i,\epsilon}(z)| \leq K_i$ 
(for $i=1,2$)
for each $\epsilon \leq \epsilon_2$.
For $z \in \mathcal{D}_{c}(z_{0},r)$, it holds that
\begin{align}
	\left|f_{\epsilon}(z) -  g_{\epsilon}(z)\right| 
	&\leq 
	\epsilon \left|z-z_{0,\epsilon}\right| \left|h_{1,\epsilon}(z)\right|
	+ \epsilon^2 \left|h_{2,\epsilon}(z)\right|.
	\notag
\end{align}
As $z_{0,\epsilon} -z_0= o(1)$, it exists $\epsilon_3 \leq \epsilon_2$ such that, for each $\epsilon \leq \epsilon_3$, $|z-z_{0,\epsilon}| < 2r$ on  $\mathcal{D}_{c}(z_{0},r)$. Hence, 
for each $\epsilon \leq \epsilon_3$, it holds that $\left|f_{\epsilon}(z) -  g_{\epsilon}(z)\right|\leq 2 \epsilon r K_1 + \epsilon^2 K_2$ on $ \mathcal{D}_{c}(z_{0},r)$. 
We now restrict $z$ to $\mathcal{C}(z_0,r)$, the boundary of $\mathcal{D}_{c}(z_{0},r)$. It exists $\epsilon_4 \leq \epsilon_3$ for which 
$2 \epsilon r K_1 + \epsilon^2 K_2 < \frac{r^3}{2} < r^{3} = |z - z_0|^{3}$ holds on $\mathcal{C}(z_0,r)$ for each $\epsilon \leq \epsilon_4$. 
Therefore, $\forall z \in \mathcal{C}(z_0,r)$, we have $|f_{\epsilon}(z) - g_{\epsilon}(z)| < |g_{\epsilon}(z)|$ for  $\epsilon \leq \epsilon_4$. 
It follows from Rouché's theorem that these values of $\epsilon$,  then $f_{\epsilon}$ and $g_{\epsilon}$ have the same number of zeros inside $\mathcal{D}_{o}(z_{0},r)$.
Thus, for $\epsilon \leq \epsilon_4$, the equation
\begin{align}
	(z - z_{0,\epsilon})^3 - \epsilon (z-z_{0,\epsilon}) h_{1,\epsilon}(z) + \epsilon^2 h_{2,\epsilon}(z) = 0
	\label{equation:eq_pert_1}
\end{align}
has three solutions in $\mathcal{D}_{o}(z_{0},r)$.
Using the the same procedure to functions 
$f_{\epsilon}(z) = (z - z_{0,\epsilon})^2 - \epsilon h_{1,\epsilon}(z)$ and $g_{\epsilon}(z) = (z-z_{0,\epsilon})^2$,
we deduce that if $\epsilon \leq \epsilon_5 \leq \epsilon_4$, the equation
\begin{align}
	(z - z_{0,\epsilon})^2 - \epsilon h_{1,\epsilon}(z) = 0
	\label{equation:eq_pert_2}
\end{align}	
has two solutions  $\hat{z}_{\epsilon}^-, \hat{z}_{\epsilon}^+$ in $\mathcal{D}_{o}(z_{0},r)$. 
We clearly have $|z_{0,\epsilon} - \hat{z}_{\epsilon}^-| =  \mathcal{O}(\epsilon^{1/2})$ and $|z_{0} - \hat{z}_{\epsilon}^-| = o(1)$. 
Therefore, $h_{1,\epsilon}(\hat{z}_{\epsilon}^-) - h_1(z_0) = o(1)$. 
As $h_1({z_0}) \neq 0$, it exists $\epsilon_6 \leq \epsilon_5$ and a neighborhood of $h_1({z_0})$, containing $h_{1,\epsilon}(\hat{z}_{\epsilon}^-), h_{1,\epsilon}(z_{0})$ 
for each $\epsilon \leq \epsilon_6$, in which  a suitable branch of the square-root $\sqrt{.}$ is analytic. 
We assume that solution $\hat{z}_{\epsilon}^-$ is given by $z_{0,\epsilon} - \hat{z}_{\epsilon}^- = -\sqrt{\epsilon} \sqrt{h_{1,\epsilon}(\hat{z}_{\epsilon}^-)}$.
As $|h_{1}(z_{0}) - h_{1,\epsilon}(\hat{z}_{\epsilon}^-)| = o(1)$, we have $z_{0,\epsilon} - \hat{z}_{\epsilon}^-= -\sqrt{\epsilon} \sqrt{h_{1}(z_0)} + o(\sqrt{\epsilon})$.
We obtain similarly that  $z_{0,\epsilon} - \hat{z}_{\epsilon}^+ = \sqrt{\epsilon} \sqrt{h_{1}(z_0)}  + o(\sqrt{\epsilon})$.

Considering again $\hat{z}_{\epsilon}^-$, it follows that it exists $\epsilon_7 \leq \epsilon_6$ such that for each $\epsilon \leq \epsilon_7$, it holds that 
\begin{align}
	\left|z_{0,\epsilon} - \hat{z}_{\epsilon}^-\right| > \frac{\sqrt{\epsilon} \sqrt{h_1(z_0)}}{2} > \sqrt{\epsilon}\sqrt{r'},
	\label{equation:boundinf1}
\end{align}
with $r' < \frac{|h_1(z_0)|}{4}$. For $\epsilon \leq \epsilon_8 \leq \epsilon_7$, we have $\sqrt{\epsilon r'} < r$ and for $z \in \mathcal{D}_{c}(z_{0,\epsilon},\sqrt{\epsilon r'})$, 
we get
\begin{align}
	\left|(z - z_{0,\epsilon})^2 - \epsilon h_{1,\epsilon}(z)\right| > 
	\epsilon |h_{1,\epsilon}(z)| - |z - z_{0,\epsilon}|^{2} >  \epsilon \left( |h_{1,\epsilon}(z)| - r^{'} \right).
	\notag
\end{align}
It is easy to check that for each $\epsilon \leq \epsilon_9 \leq \epsilon_8$, then  $|h_{1,\epsilon}(z)| > \frac{|h_{1}(z_0)|}{2}$ for 
$z \in \mathcal{D}_c(z_{0,\epsilon},\sqrt{\epsilon r'})$. Therefore, 
\begin{align}
	\left|(z - z_{0,\epsilon})^2 - \epsilon h_{1,\epsilon}(z)\right| 
	> \epsilon \left(\frac{|h_{1}(z_0)|}{2} - r'\right)
	> \epsilon r'.
	\label{equation:boundinf2}
\end{align}
The inequalities \eqref{equation:boundinf1} and \eqref{equation:boundinf2} prove that in $\mathcal{D}_c(z_{0,\epsilon},\sqrt{\epsilon r'})$, 
the equation \eqref{equation:eq_pert_2} has no solution and that the equation $(z - z_{0,\epsilon})^3 - \epsilon(z - z_{0,\epsilon}) h_{1,\epsilon}(z) = 0$ has only one solution there.

We  now study the number of solutions in $\mathcal{D}_c(z_{0,\epsilon},\sqrt{\epsilon r'})$ of the equation \eqref{equation:eq_pert_1}. Consider
\begin{align}
	f_{\epsilon}(z) &= (z - z_{0,\epsilon})^3 - \epsilon(z - z_{0,\epsilon}) h_{1,\epsilon}(z) + \epsilon^2 h_{2,\epsilon}(z),
	\notag\\
	g_{\epsilon}(z) &= (z - z_{0,\epsilon})^3 - \epsilon(z - z_{0,\epsilon}) h_{1,\epsilon}(z). \notag
\end{align}
We have $|f_{\epsilon}(z) - g_{\epsilon}(z)| = \epsilon^2 |h_{2,\epsilon}(z)|$. We consider $z\in \mathcal{C}(z_{0,\epsilon},\sqrt{\epsilon r'})$. 
From \eqref{equation:boundinf2}, $|g_{\epsilon}(z)| > (\epsilon r')^{3/2}$. Therefore, for each $\epsilon \leq \epsilon_{10} \leq \epsilon_9$, it holds that
$|g_{\epsilon}(z)| > \epsilon^2 |h_{2,\epsilon}(z)| = |f_{\epsilon}(z) - g_{\epsilon}(z)|$. Thus, from Rouché's theorem, the equation \eqref{equation:eq_pert_1}
has only one solution in $\mathcal{D}_o(z_{0,\epsilon},\sqrt{\epsilon r'})$, denoted by $z_{\epsilon}$.
To obtain $z_{\epsilon}$, we write
\begin{align}
	z_{\epsilon} - z_{0,\epsilon} = \frac{- \epsilon^2 h_{2,\epsilon}(z_{\epsilon})}{(z_{\epsilon} - z_{0,\epsilon})^2 - \epsilon h_{1,\epsilon}(z_{\epsilon})}.
	\notag
\end{align}
Since $|(z - z_{0,\epsilon})^2 - \epsilon h_{1,\epsilon}(z)| > \epsilon r^{'}$ on $\mathcal{D}_c(z_{0,\epsilon},\sqrt{\epsilon r'})$ (see \eqref{equation:boundinf2}), 
we get that
\begin{align}
	\left|z_{\epsilon} - z_{0,\epsilon}\right| 
	\leq 
	\frac{\epsilon^2 K_2}{\epsilon r'}
	= \mathcal{O}\left(\epsilon\right).
	\notag
\end{align}
But from equation \eqref{equation:eq_pert_1}, we also have
$\epsilon (z_{\epsilon}-z_{0,\epsilon}) h_{1,\epsilon}(z_{\epsilon}) = (z_{\epsilon} - z_{0,\epsilon})^3  + \epsilon^2 h_{2,\epsilon}(z_{\epsilon})$ which leads to
\begin{align}
	z_{\epsilon}-z_{0,\epsilon} = 
	\epsilon \frac{h_{2,\epsilon}(z_{\epsilon})}{h_{1,\epsilon}(z_{\epsilon})} + \frac{(z_{\epsilon}-z_{0,\epsilon})^3}{\epsilon h_{1,\epsilon}(z_{\epsilon})}.
	\notag
\end{align}
It is clear that
\begin{align}
	\frac{h_{2,\epsilon}(z_{\epsilon})}{h_{1,\epsilon}(z_{\epsilon})}
	-
	\frac{h_{2}(z_{0})}{h_{1}(z_{0})}
	=
	o(1),
	\notag
\end{align}
so that
\begin{align}
	z_{\epsilon}-z_{0,\epsilon} = 
	\epsilon \frac{h_{2}(z_{0})}{h_{1}(z_{0})} 
	+ o(\epsilon).
	\notag
\end{align}
We now evaluate the two remaining solutions of \eqref{equation:eq_pert_1} located in the set
$\mathcal{D}_o(z_0,r)\backslash \mathcal{D}_o(z_{0,\epsilon}, \sqrt{\epsilon r'})$, denoted $z_{\epsilon}^-$, $z_{\epsilon}^+$.
As $|z_{\epsilon}^- - z_{0,\epsilon}| > \sqrt{r' \epsilon}$, we can write
\begin{align}
	\left(z_{\epsilon}^- - z_{0,\epsilon}\right)^2 = 
	\epsilon h_{1,\epsilon}(z_{\epsilon}^-) - \epsilon^2 \frac{h_{2,\epsilon}(z_{\epsilon}^-)}{z_{\epsilon}^- - z_{0,\epsilon}} 
	\label{eq:utile}
\end{align}
This implies that $|z_{\epsilon}^- - z_{0,\epsilon}| = \Ocal(\sqrt{\epsilon})$ and that  $|z_{\epsilon}^- - z_{0}| = o(1)$. 
Taking a suitable branch of the square root, \eqref{eq:utile} implies that
\begin{align}
	z_{\epsilon}^- - z_{0,\epsilon}	= - \sqrt{\epsilon h_{1,\epsilon}(z_{\epsilon}^-)} + o(\sqrt{\epsilon}) = -  \sqrt{\epsilon h_{1}(z_0)} + o(\sqrt{\epsilon}).
	\notag
\end{align}
We obtain similarly that $z_{\epsilon}^+ - z_{0,\epsilon} =  \sqrt{\epsilon h_{1}(z_0)} + o(\sqrt{\epsilon})$. 

We finally verify that if $z_0$ and $z_{0,\epsilon}$ belong to $\mathbb{R}$ for each $\epsilon$, and that $h_i(z)$ and $h_{i,\epsilon}(z)$  belong to $\mathbb{R}$ for each $\epsilon$ 
if $z \in \mathbb{R}$ for $i=1,2$, then $z_{\epsilon}$ is real while $z_{\epsilon}^-, z_{\epsilon}^+$ are real if $h_1(z_0) > 0$. 

If $z_{\epsilon}$ is not real, it is clear that $z_{\epsilon}^{*}$ is also solution of \eqref{equation:eq_pert_1} because functions $h_{i,\epsilon}$ 
verifies $(h_{i,\epsilon}(z))^{*} = h_{i,\epsilon}(z^{*})$. 
As $|z_{\epsilon}^{*} - z_{0,\epsilon}| = |z_{\epsilon} - z_{0,\epsilon}| = \Ocal(\epsilon)$, and that \eqref{equation:eq_pert_1}  has a unique solution in the disk 
$\Dcal_o(z_{0,\epsilon}, \sqrt{\epsilon r^{'}})$, this implies that $z_{\epsilon}^{*} = z_{\epsilon}$. 
On the other hand, assume that $h_1(z_0) > 0$ and the  $z_{\epsilon}^-, z_{\epsilon}^+$ are non-real. 
Then, $z_{\epsilon}^{+ \ *}$ and $z_{\epsilon}^{- \ *}$ are also solution of \eqref{equation:eq_pert_2}. 
Since equation \eqref{equation:eq_pert_2} has only two solutions outside the disk $\Dcal_o(z_{0,\epsilon}, \sqrt{\epsilon r^{'}})$, it follows that $\hat{z}_{\epsilon}^{+}$ and 
$\hat{z}_{\epsilon}^{-}$ are complex conjuguate. 
But as their real parts have opposite sign for $\epsilon$ small enough, this leads to a contradiction.
Therefore $\hat{z}_{\epsilon}^{+}$ and $\hat{z}_{\epsilon}^{-}$ are real. We finally note that if $h_1(z_0) < 0$, then $\hat{z}_{\epsilon}^{+}$ and $\hat{z}_{\epsilon}^{-}$  are non real.

\end{document}